\documentclass[a4paper,runningheads,envcountsame,envcountsect]{llncs}

\usepackage[utf8]{inputenc}
\usepackage{amsmath}
\usepackage{amssymb}
\usepackage{graphicx}
\usepackage{subcaption}
\usepackage{pgfplots}

\pgfplotsset{compat=1.18}

\usepackage[
	backref,
	backend=biber,
	style=numeric,
	natbib=true,
    maxnames=99,
    sortcites,
]{biblatex}
\usepackage{hyperref}
\usepackage{xcolor}

\definecolor{gray}{rgb}{.4,.4,.4}
\definecolor{webgreen}{rgb}{0,.5,0}
\definecolor{Maroon}{cmyk}{0, 0.87, 0.68, 0.32}
\definecolor{RoyalBlue}{cmyk}{1, 0.50, 0, 0}

\hypersetup{
    colorlinks,
    citecolor=webgreen,
    filecolor=RoyalBlue,
    linkcolor=RoyalBlue,
    urlcolor=RoyalBlue
}

\usepackage[capitalize,nameinlink]{cleveref}
\crefname{equation}{Eq.}{Eqs.}
\Crefname{equation}{Eq.}{Eqs.}
\crefname{appendix}{Appendix}{Appendices}
\Crefname{appendix}{Appendix}{Appendices}
\usepackage{etoolbox}
\AtEndEnvironment{proof}{\qed}

\title{Supersingularity and Superspeciality Verification of Abelian Surfaces}

\author{
	Maria Corte-Real Santos \inst{1} \and
	Gioella Lorenzon \inst{2} \and
	Krijn Reijnders \inst{2}
}

\institute{
	École Normale Supérieure de Lyon,
	CNRS\\
    \and
    COSIC, KU Leuven
}

\usepackage{xcolor}
\usepackage{colonequals}

\usepackage{amsmath}
\usepackage{amssymb}
\usepackage{mathtools}
\usepackage{csquotes}
\usepackage{xspace}

\usepackage{multirow}
\usepackage{siunitx}
\usepackage{booktabs}
\usepackage{makecell}
\usepackage{tikz-cd}
\usepackage{pgfplots}
\pgfplotsset{compat=1.18}
\usepackage{enumitem}
\usepackage{pgfplotstable}
\usepackage{booktabs}
\usepackage{siunitx}

\usepackage[noend]{algorithmic}
\usepackage[ruled]{algorithm}
\newtheorem{lemalg}[algorithm]{Algorithm}
\crefname{lemalg}{Algorithm}{Algorithms}

\renewcommand{\phi}{\varphi}

\newcommand{\algorithmstyle}[1]{\ensuremath{\mathsf{#1}}\xspace}

\newcommand{\IsSupersingular}{\algorithmstyle{IsSupersingular}}
\newcommand{\DEFIsSupersingular}{\IsSupersingular}

\newcommand{\J}{\mathcal{J}}

\newcommand{\zero}{\mathbf{0}}

\newcommand{\Fp}{\mathbb{F}_p}
\newcommand{\Fpt}{\mathbb{F}_{p^2}}
\newcommand{\Fpthree}{\mathbb{F}_{p^3}}
\newcommand{\Fpf}{\mathbb{F}_{p^4}}
\newcommand{\Fps}{\mathbb{F}_{p^6}}
\newcommand{\Fpk}{\mathbb{F}_{p^k}}
\newcommand{\Fpkt}{\mathbb{F}_{p^{2k}}}
\newcommand{\Fpbar}{\overline{\mathbb{F}}_p}
\newcommand{\Fq}{\mathbb{F}_q}
\newcommand{\Fqk}{\mathbb{F}_{q^k}}

\newcommand{\Z}{\mathbb{Z}}

\DeclareMathOperator{\End}{End}
\DeclareMathOperator{\Gal}{Gal}
\DeclareMathOperator{\Aut}{Aut}
\DeclareMathOperator{\RA}{RA}
\DeclareMathOperator{\Twists}{Twists}

\newcommand{\ZZ}{\ensuremath{\mathbb{Z}}}

\newcommand{\NN}{\ensuremath{\mathbb{N}}}

\newcommand{\kbar}{\overline{k}}
\newcommand{\rgets}{\xleftarrow{\$}}

\newcommand{\picks}{\xleftarrow{\$}}

\begin{document}

\maketitle

\begin{abstract}
        Supersingular abelian surfaces are essential in isogeny-based cryptography.
    Despite this, we have no efficient algorithm to verify if a given abelian surface is supersingular.
    In this work, we initiate this research topic by giving an efficient Monte Carlo algorithm to verify
    if an abelian surface over $\Fp$ is supersingular in $O(\log p)$ with negligible failure probability,
    and an efficient conclusive algorithm if the order is smooth.
    We derive this algorithm by a careful analysis on the structure of supersingular Jacobians over $\Fp$.
    Furthermore, we derive efficient algorithms to verify if an abelian variety of any dimension is minimal or maximal,
    and to verify if a Jacobian of any dimension is superspecial.
\end{abstract}

\keywords{post-quantum cryptography, isogenies, Jacobians, supersingularity}

\section{Introduction}
\label{sec:intro}

Since the spectacular break of SIDH/SIKE~\cite{attack1,attack2,attack3}, higher-dimensional techniques have
revolutionized isogeny-based cryptography.
Modern protocols,
such as the signature schemes SQIsign~\cite{NISTPQC-ADD-R2:SQIsign25} and PRISM~\cite{prism}, the NIKE MIKE~\cite{mike}, or the KEM POKÉ~\cite{poke}
heavily rely on higher-dimensional isogenies for both performance and small sizes.
Abelian varieties of dimension $g > 1$ have therefore become fundamental objects in isogeny-based cryptography.
More precisely, such protocols rely on \emph{supersingular} principally polarized abelian varieties,
and often even the more restricted \emph{superspecial} principally polarized abelian varieties.
With recent developments, such as the module action framework~\cite{mike,mike2} or the Shimura class group
action on abelian surfaces~\cite{shimact}, validating supersingularity becomes crucial to security.
This can be seen as the higher-dimensional equivalent of public key validation in CSIDH~\cite{CSIDH},
where omitting key validation would introduce a significant attack vector for an adversary.

\paragraph{Supersingular elliptic curves.}
Supersingularity verification of elliptic curves has been studied extensively in both mathematical and cryptographic literature.
\textcite{sutherland_supersingularity} initiated this study by describing a Monte Carlo test
for elliptic curves over $\Fp$ and $\Fpt$: take a random point $P \in E(\Fq)$ and check
if $[p]P = \pm P$. The probability of a false positive is $O(q^{-\frac{1}{2}})$
and therefore suitable for almost all cryptographic purposes, where $q$ is large.

Nevertheless, such a test does not \emph{conclusively prove} that a curve is supersingular.
To do so, Sutherland describes an
algorithm based on the depth of isogeny volcanoes. In CSIDH~\cite{CSIDH}, the supersingularity test
is instead performed by verifying that the order of a random point is larger than $4\sqrt{p}$,
which, by the bound on width of the Hasse-Weil interval, proves that the order of the curve is $p+1$,
 and therefore the curve is supersingular.
 dCSIDH~\cite{dCSIDH} optimizes this approach
for primes $p$ where $2^f \mid p + 1$ with $f > \tfrac{1}{2}\log p$,
e.g., most primes used in current-day isogeny-based cryptography.
\textcite{doliskani} describes an algorithm based on polynomial identity testing to establish
supersingularity, which uses the fact that the $p$-th division polynomial $\psi_p(x)$ is precisely $\pm 1$
for supersingular curves, as these have $p$-rank 0.
\textcite{val-and-the-gang} give an in-depth performance comparison between these algorithms,
and reinterpret Doliskani's test using Montgomery arithmetic, for which \textcite{robert_cubical}
gives a reinterpretation as a self-pairing using cubical arithmetic. Lastly, in
\cite{pairings-CSIDH},
pairings are used to optimize the verification of the order of the curve, moving much of the computational
cost from curve arithmetic to field arithmetic.

\paragraph{Supersingular and superspecial abelian surfaces.}
Moving up in dimension, there is no single analogue of supersingularity. Instead, there are several inequivalent
generalizations organized by the Ekedahl--Oort stratification. As a leading example in dimension 2,
supersingularity for elliptic curves may be generalized as either supersingularity
of an abelian surface, or the more specific notion of superspeciality. The latter property is often the right
generalization for cryptographic purposes. Although literature exists which studies this
classification of abelian varieties in this stratification, we are not aware of literature addressing the
specific question of \emph{efficiently} verifying the supersingularity and superspeciality of abelian surfaces over a
finite field. To the best of our knowledge, the best known methods to verify such results depend on
point-counting algorithms or $L$-polynomial computations, which are prohibitively slow for the large primes used in isogeny-based cryptography.
In this work, we present an efficient algorithm for supersingularity testing of abelian surfaces over $\Fp$.
We furthermore study the question of minimality and maximality of abelian varieties, which allows us to efficiently verify superspeciality of Jacobians over $\Fp$ and $\Fpt$.

\paragraph{Key Example.}
We demonstrate the core idea of our simple and efficient probabilistic verification with the following key example, in which we verify minimality or maximality of an abelian variety. This property implies superspeciality, and therefore supersingularity \cite[Thm. 5.3]{max_abvars}.

\begin{lemalg}\label{alg:key-example}
    Let $A$ be an abelian surface over $\Fpt$.
    Let $P \rgets A(\Fpt)$.
    \begin{itemize}
        \item {\upshape If $[p + 1]P = \zero_A$, return \textbf{true}.
        \item Otherwise, return \textbf{false}.}
    \end{itemize}
\end{lemalg}
\noindent The algorithm for minimal abelian surfaces replaces $p + 1$ by $p-1$. The probability of
a false positive is $O(p^{-1})$, and is thus negligibly small for large primes.
This follows from what we call the \emph{width-bound argument},
used by Sutherland for elliptic curves~\cite{sutherland_supersingularity}.
In \Cref{thm:minmax-test}, we generalize this argument to abelian varieties.
However, this algorithm only works for minimal and maximal varieties, whereas
general supersingularity testing is much more complicated. Already for the cases
of abelian surfaces over $\Fp$, we require a deeper understanding
of the group structures of supersingular abelian surfaces and their twists.
Furthermore, the above algorithm can never \emph{conclusively prove} that an abelian surface is supersingular
or even superspecial, which requires additional understanding of the isotropic subgroups of abelian surfaces.

\subsubsection*{Contributions.}
We provide the first practical polynomial time supersingularity and superspeciality verification algorithms
for abelian surfaces over $\Fp$. We develop both probabilistic and conclusive
algorithms.
\begin{enumerate}
    \item In \Cref{sec:structure}, we analyze the structure of supersingular and superspecial Jacobians over $\Fp$,
    resulting in three key results:
    \begin{enumerate}
        \item \Cref{thm:supersingular-orders}, which describes the possible orders of supersingular Jacobians over $\Fpt$;
        \item \Cref{tab:jacobian-summary}, giving the group structures of supersingular Jacobians over $\Fp$;
        \item \Cref{thm:twisting superspecial}, which classifies superspecial Jacobians using the automorphism group of the associated genus-$2$ curve.
    \end{enumerate}
    These results underlie both the probabilistic and conclusive testing.
    \item In \Cref{sec:testing}, we derive algorithms for \emph{probabilistic verification} of extremality,
    supersingularity and superspeciality of abelian surfaces.
    \begin{enumerate}
        \item In \Cref{alg:minmax-test} and \Cref{thm:minmax-test},
        we generalize \Cref{alg:key-example}
        to verify minimality and maximality of an abelian variety of any dimension.
        \item Using \Cref{thm:twisting superspecial} we construct \Cref{alg:superspecial-test}, an efficient algorithm for testing superspeciality of Jacobians over $\Fp$. Its correctness and runtime are proved in \Cref{thm:superspecial-test}.
        We show a similar result for superspecial Jacobians over $\Fpt$ in \Cref{app:twisted superspecials}.
        \item In \Cref{alg:supersingularity_test} we give a $O(\log p)$
        Monte Carlo supersingularity test for abelian surfaces over $\Fp$, with all arithmetic performed  over $\Fp$. Its correctness and runtime are based on \Cref{thm:probabilistic-testing}.
    \end{enumerate}
    \item In \Cref{sec:conclusive}, we derive algorithms for \emph{conclusive verification} of supersingularity for
    abelian surfaces over $\Fp$, summarized in \Cref{cor:conclusive-testing-non-cyclic,cor:conclusive-testing-cyclic}.
    We furthermore analyze the factorization and discrete logarithm requirements for these algorithms,
    and specialize to a case of specific cryptographic interest, i.e. when $2^f \mid p + 1$,
    with $f$ close to $\log p$.
\end{enumerate}

\noindent We give an overview of all verification algorithms in this work in
\Cref{tab:prob-overview} (probabilistic tests) and \Cref{tab:conc-overview} (conclusive tests),
listing the setting in which each algorithm applies, its complexity, and either its failure probability or its
structural requirements.
We implemented these algorithms in \texttt{Magma}, and
benchmark their performance in \Cref{sec:results}.
The code is available
at the following repository,
\begin{center}
    \small \url{https://github.com/KULeuven-COSIC/supersingularity-and-superspeciality}.
\end{center}
\begin{table}[h]
  \centering
  \small
  \setlength{\tabcolsep}{5pt}
  \renewcommand{\arraystretch}{1.2}
  \caption{Overview of probabilistic (Monte Carlo) verification algorithms in this work. Complexities count field operations in the stated field, and $q = p$ or $p^2$.}
  \label{tab:prob-overview}
  \vspace{6pt}
  \begin{tabular}{@{}llllc@{}}
    \toprule
    \textbf{Algorithm} & \textbf{Verifies} & \textbf{Variety} & \textbf{Complexity} & \textbf{False pos.} \\
    \midrule
    \Cref{alg:key-example}           & maximality       & $A$$/ \Fpt$    & $O(\log p)$ & $O(p^{-1})$ \\
    \Cref{alg:minmax-test}           & maximality       & $A / \Fq$, any $g$  & $O(\log p)$ & $O(p^{-1})$ \\
    \Cref{alg:superspecial-test}     & superspeciality  & $\J / \Fq$, any $g$ & $O(g \log p)$ & $O((g-1)\,p^{-1})$ \\
    \Cref{alg:supersingularity_test} & supersingularity & $A$$/ \Fp$     & $O(\log p)$ & $O(p^{-1/2})$ \\
    \bottomrule
  \end{tabular}
\end{table}

\begin{table}[h]
  \centering
  \small
  \setlength{\tabcolsep}{4pt}
  \renewcommand{\arraystretch}{1.2}
  \caption{Overview of conclusive verification algorithms in this work. In the first three rows, $n = p^2 - bp + 1$ for $b \in \{-1, 0, 1\}$ and $n = p\mp 1$ for $b = \pm 2$, and $\omega(n)$ is the number of distinct prime factors of $n$. We assume the factorization of $n$ is known.
  Complexities count field operations in the stated field.}
  \label{tab:conc-overview}
  \vspace{6pt}
  \begin{tabular}{@{}lllll@{}}
    \toprule
    \textbf{Algorithm} & \textbf{Verifies} & \textbf{Setting} & \textbf{Complexity} & \textbf{Requirements} \\
    \midrule
    \Cref{cor:conclusive-testing-non-cyclic} & supersing. & $b = 2$ over $\Fp$
      & $O(\log\omega(n)\log p)$ & factorization of $n$ \\
    \Cref{cor:conclusive-testing-non-cyclic} & supersing. & $b = -2$ over $\Fp$
      & $O(\log\omega(n)\log p)$ & DLog in $\mu_{p+1}$ \\
    \Cref{cor:conclusive-testing-cyclic} & supersing. & $b \in \{-1,0,1\}$ over $\Fp$
      & $O(\log\omega(n)\log p)$ & works over $\Fpk$ \\
    \Cref{lem:two torsion test} & supersing. & $b = \pm 2$ over $\Fp$
      & $O(\log p)$ & $p = c \cdot 2^f \pm 1$  \\
    \Cref{alg:conclusive-extremal-surfaces} & maximality & $b = -2$ over $\Fpt$
      & $O(\log p)$ & DLog in $\mu_{p+1}$ \\
    \Cref{lem:fpt two torsion} & maximality & $b = -2$ over $\Fpt$
      & $O(\log p)$ & $p = c \cdot 2^f - 1$ \\
    \bottomrule
  \end{tabular}
\end{table}

Some of this work is phrased using Jacobians, representing principally polarized abelian surfaces that are neither products nor Weil restrictions. However, in practice we need fairly little machinery to apply our algorithms:
the probabilistic algorithms rely only on uniform random sampling and scalar multiplication, whereas the conclusive
algorithms additionally require pairing computations and discrete logarithms. Our conventions and the role of the principal polarization are detailed in \Cref{sec:prelim}.

\paragraph{Related Work.}
The task of verifying supersingularity or superspeciality in dimension $2$
fits into the much more general task of classifying curves
and principally polarized abelian varieties according to their $p$-rank. This is called the Ekedahl--Oort stratification~\cite{oort}, and has been widely studied (see, e.g.,~\cite{devala,moonen}).

In dimension $2$, one may directly detect supersingularity by computing the $L$-polynomial of the
abelian surface or, equivalently, the characteristic polynomial of Frobenius. Methods to do so often rely on generalizations
of Schoof-like algorithms to determine the number of points on a curve or variety.
Most directly applicable is the Schoof--Pila algorithm~\cite{pila},
and subsequent improvements~\cite{gaudry-schost,kieffer}.
Of particular interest to this work is the article by~\textcite{jia-shi} on lifting $L$-polynomials of genus-$2$ curves.
For superspeciality, the most well-known approach relies on computation of the Cartier-Manin or Hasse-Witt matrix, which also
allows one to verify supersingularity.
In \Cref{sec:results}, we compare these approaches to the algorithms given in this work.

\subsubsection{Acknowledgements.}
This work is supported by the European Research
Council under grant No. 101116169 (AGATHA CRYPTY),
under the European Union’s Horizon 2020 research and innovation programme
(grant agreement ISOCRYPT – No. 101020788), by the Research Council KU Leuven grant
C14/24/099 and by CyberSecurity Research Flanders with reference number VOEWICS02.
The second-listed author is additionally supported by the Fonds
voor Wetenschappelijk Onderzoek (FWO) under grant number 1139225N.

\section{Preliminaries}
\label{sec:prelim}
We assume basic familiarity with elliptic curves and their arithmetic, see~\cite{silverman} for more background.
Throughout, $\Fq$ denotes a finite field of characteristic $p$, and $\Fpbar$ its algebraic closure.

\subsection{Supersingular Elliptic Curves}
Supersingularity of elliptic curves over $\mathbb{F}_q$ can be defined by several equivalent properties. Among these, is that their endomorphism ring is isomorphic to a maximal order in a quaternion
algebra, whereas ordinary curves have endomorphism rings isomorphic to quadratic orders.
For this work, essential properties of supersingular elliptic curves are that
their $p$-rank is 0, meaning that $E[p](\Fpbar) = \zero$, and that the trace of the characteristic polynomial of
Frobenius is a multiple of $p$. A shared feature of all these characteristics is that they enforce strict
group structures on $E(\Fq)$, in contrast with ordinary curves, whose group structures are much more flexible.

Supersingular curves are fundamental in isogeny-based cryptography, as their rich endomorphism ring structures allow
for flexibility and efficiency in the design of cryptographic algorithms. Furthermore, the precise control on the group
structure allows for efficient isogeny computations using Velu's formulas~\cite{velu}.

\subsection{Abelian Varieties}
A \emph{polarization} of an abelian variety $A$ is an isogeny $\lambda$ between $A$ and its dual $\hat{A}$
induced by an ample line bundle.
Such a polarization is \emph{principal} whenever $\lambda$ is an isomorphism. In this work we mostly focus on abelian surfaces, which are abelian varieties of dimension 2.
A principally polarized abelian surface over $\Fq$ is either
the Jacobian of a hyperelliptic curve of genus 2, the product of two elliptic curves over $\Fq$,
or the Weil restriction of an elliptic curve over $\mathbb{F}_{q^2}$. All three come equipped with a canonical
principal polarization.

For brevity, we will often use the term abelian surface instead of \emph{principally polarized abelian surfaces}.
We remark, however, that the polarization is used in only two
ways: through the classification above,
and results derived from it
(e.g., \Cref{thm:superspeciality if twist} and \Cref{thm:twisting superspecial});
and through the Weil-pairing isotropy conditions of
\eqref{lem:weil-pairing}, used in the pairing-based conclusive tests in \Cref{sec:conclusive}.

Furthermore, in this work, we mainly focus on Jacobians of hyperelliptic curves,
as supersingularity testing for Weil restrictions and products is easy:
over a quadratic extension, a Weil restriction is isomorphic to a product, and
a product is supersingular, and in particular superspecial, if and only if both factors are supersingular.

\paragraph{Structure.}
The $n$-torsion of an abelian variety over a finite field are those points $P \in A(\Fq)$
such that $[n]P = \zero$. Let $A[n]$ denote the subgroup of $n$-torsion points over the algebraic
closure $\Fpbar$. If $\dim A = g$ and $n$ is coprime to $p$, then
\[
    A[n] \cong \left( \ZZ/n\ZZ \right)^{2g}.
\]
The $\Fq$-rational $n$-torsion $A[n](\Fq)$ is significantly more restricted, due to isotropy conditions enforced by
the Weil pairing, which is where the principal polarization is used as it identifies $A$ with $\hat{A}$
and thereby turns the Weil pairing into a non-degenerate alternating pairing on $A[n]$.
We follow~\cite[Prop. 5.78]{handbook}: for a principally polarized abelian variety $A/\Fq$ of
dimension $g$ and an integer $n$ coprime to $p$,
let $d = \gcd (n, q-1)$. Then we have
\begin{equation}\label{lem:weil-pairing}
    A[n](\Fq) \hookrightarrow (\ZZ_n)^g \times (\ZZ_d)^g.
\end{equation}
In contrast, the $p$-torsion $A[p]$ of an abelian variety
is a more nuanced object,
and in general has rank at most $g$ instead of $2g$, referred to as the \emph{$p$-rank} of $A$.

We refer to the smallest field extension $\Fqk$ such that $A[n] \subseteq A(\Fqk)$
as the \emph{full $n$-torsion degree} of $A$. This is closely related to the \emph{embedding degree}
$k'$ of $n$, defined by the smallest $k'$ such that $n \mid q^{k'} - 1$, i.e., $\mu_n \subseteq \Fqk^\times$.
By the non-degeneracy of the Weil pairing $e_n: A[n] \times A[n] \to \mu_n$, we have $k' \leq k$.

From the above, we can derive the general group structure of $A(\Fq)$ as
\[
    A(\Fq) \cong \ZZ/n_0 \ZZ \times \ldots \times \ZZ/n_{r-1} \ZZ
\]
where $r \leq 2g$ is the \emph{rank} of $A$ and $n_{i+1} \mid n_i$ for all $i$. We refer to $n_0$ as the \emph{exponent}
of $A(\Fq)$, as we have $[n_0] P = \zero$ for all $P \in A(\Fq)$, whereas the order is $|A(\Fq)| = \prod n_i$.

\paragraph{Supersingularity.} In higher dimensions,
supersingularity can be generalized in several ways. A distinguishing property for elliptic curves
is the $p$-rank, which is 1 for ordinary curves, and 0 for supersingular curves.
Similarly, in dimension 2, ordinary abelian surfaces have $p$-rank 2 and supersingular abelian surfaces have $p$-rank 0.
Equivalently, supersingular abelian surfaces are isogenous, as unpolarized varieties, to a product of supersingular elliptic curves.
Abelian surfaces are \emph{superspecial} whenever they are \emph{isomorphic} to a product of supersingular elliptic curves
as unpolarized varieties. Hence, superspecial varieties are always supersingular, but not the other way around.
For Jacobians of hyperelliptic curves, we say that the associated curve is supersingular, or superspecial, if the Jacobian is supersingular or superspecial, respectively.

\subsection{Characteristic Polynomial of Frobenius}
Let $A / \Fq$ be an abelian variety of dimension $g$, then
we denote by $\phi_A$ the characteristic polynomial of the $q$-Frobenius endomorphism $\pi : A \to A$, also called the \emph{Weil polynomial}.
This polynomial captures essential information for abelian varieties:
\begin{enumerate}
    \item We have $\phi_A(x) = \prod_{i = 1}^{2g} (x - \alpha_i)$, where the $\alpha_i$ are the $q$-Weil numbers with $|\alpha_i| = \sqrt{q}$,
    \item We have $\phi_A(1) = |A(\Fq)|$,
    \item Two varieties $A$ and $B$ are isogenous if and only if $\phi_A = \phi_B$,
	\item If $\phi_q(x) = \prod_{i = 1}^{2g} (x - \alpha_i)$ is the Weil polynomial of a variety
    with respect to the $q$-Frobenius, then $\phi_{q^k}(x) = \prod_{i = 1}^{2g} (x - \alpha_i^k)$
    is the Weil polynomial with respect to the $q^k$-Frobenius.
\end{enumerate}
For abelian surfaces over $\Fp$, the characteristic polynomial is of the form
\begin{equation}\label{eq:general-char-pol}
	\phi_A(x) = x^4 + a x^3 - b x^2 + a p x + p^2, \qquad a, b \in \ZZ.
\end{equation}
Since $-a$ is the trace of $\pi$, by an abuse of language we will refer to $a$ as the \emph{trace} of $A$ over $\Fp$.
Most pertinent to this work is the fact that
abelian surfaces are supersingular if and only if they have trace $a = 0$ and $p \mid b$.
For a given abelian surface $A$, there are only two variables, so in general
we can explicitly derive $\phi_A(x)$ if we know the evaluation of $\phi_A$ at two values.

\paragraph{Bounds.}
Not any characteristic polynomial $\phi_A(x)$ is possible over $\Fq$. In general, we get the following
bounds~\cite{weil-bounds}, which we refer to as \emph{Weil bounds}
\begin{align}
    |a| &\leq 4 \sqrt{q}, \label{eq:bound-a} \\
    \phantom{|}b\phantom{|}  &\leq 2q - 2 \sqrt{q} |a|, \label{eq:upper-bound-b} \\
    \phantom{|}b\phantom{|}  &\geq -2q - \frac{1}{4} a^2. \label{eq:lower-bound-b}
\end{align}

\paragraph{The Hasse-Weil Interval.}
One may see from the shape of $\phi_A$ that an abelian variety $A$ of dimension $g$ over $\Fq$
must have at least $(\sqrt{q} - 1)^{2g}$ points, and at most $(\sqrt{q} + 1)^{2g}$ points.
This is the Hasse--Weil interval:
\begin{equation}\label{eq: hasse weil interval}
    (\sqrt{q} - 1)^{2g} \ \leq \ |A(\Fq)| \ \leq \ (\sqrt{q} + 1)^{2g}.
\end{equation}
In general, we find that the number of points is always $O(q^g)$ and the width
of this interval is $O(4g q^{g - 1/2})$. For abelian surfaces over $\Fp$,
this gives roughly $p^2$ points, and a width of $8 \sqrt{p} (p + 1)$ for the interval.
In this work, we will often use the following result, which we refer to as the \emph{width-bound argument}.

\begin{lemma}\label{lem:width-bound-argument}
    Let $A$ be an abelian variety over $\Fq$ of dimension $g$ and $H \subseteq A(\Fq)$
    with $|H|$ larger than the width of the Hasse--Weil interval. Then $|A(\Fq)| = n$,
    where $n$ is the unique multiple of $|H|$ in the Hasse--Weil interval.
\end{lemma}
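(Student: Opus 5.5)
The plan is to combine Lagrange's theorem with the Hasse--Weil bounds \eqref{eq: hasse weil interval}. Write $L = (\sqrt{q}-1)^{2g}$ and $U = (\sqrt{q}+1)^{2g}$, so that the width of the interval is $W = U - L$. The hypothesis is $|H| > W$, where $H$ is a subgroup of the finite abelian group $A(\Fq)$.

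First I invoke Lagrange's theorem: $|H|$ divides $|A(\Fq)|$. Hence $|A(\Fq)|$ is some multiple $m|H|$ of $|H|$, with $m \geq 1$ because $\zero_A \in A(\Fq)$. By \eqref{eq: hasse weil interval}, this multiple lies in the closed interval $[L, U]$. So at least one multiple of $|H|$ lies in the Hasse--Weil interval, namely $|A(\Fq)|$ itself.

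Next I show uniqueness. Suppose two distinct multiples $m_1|H| < m_2|H|$ both lie in $[L,U]$. Then
\[
|H| \leq (m_2 - m_1)|H| \leq U - L = W,
\]
which contradicts $|H| > W$. Therefore exactly one multiple of $|H|$ lies in the interval. Since $|A(\Fq)|$ is such a multiple, it equals the unique one, $n$.

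There is no real obstacle; the argument is two lines. The only subtlety is whether the Hasse--Weil interval is taken to be closed. It is closed, since the endpoints can be attained, and I use the bounds in that form. With the strict inequality $|H| > W$, even two multiples sitting at the two endpoints are excluded, so no further care is needed. If one wants the weaker hypothesis $|H| \geq W$, the argument breaks exactly at the endpoint case, and that is why the statement uses a strict inequality.
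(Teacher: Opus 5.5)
Your proof is correct and follows the same route as the paper. Lagrange's theorem makes $|A(\Fq)|$ a multiple of $|H|$, the Hasse--Weil bounds place it in the interval, and $|H|$ exceeding the width leaves room for at most one such multiple; you simply make explicit the uniqueness step that the paper asserts in one line, while the paper's extra estimate of the width by $4^g q^{g-1/2}$ is not needed for the lemma itself.
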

\begin{proof}
    The generalized width of the Hasse--Weil interval is
    \[
        (\sqrt{q} + 1)^{2g} - (\sqrt{q} - 1)^{2g} \leq 4^g \cdot q^{g - 1/2}
    \]
    and the lower bound of the Hasse--Weil interval is
    $(\sqrt{q} - 1)^{2g}$.
    Thus, for any subgroup $H \subseteq A(\Fq)$ such that $|H|$ is
    larger than the width of the Hasse--Weil interval, there is only one unique multiple $n$ of $|H|$
    in the Hasse--Weil interval. But this must then be the size of $A(\Fq)$, as $|A(\Fq)|$ is in the Hasse--Weil
    interval, and a multiple of $|H|$.
\end{proof}

\paragraph{Extremal \texorpdfstring{Abelian Varieties}{Abelian Varieties}.} An abelian variety that reaches the lower bound is called \emph{minimal}, one that reaches the upper bound
is called \emph{maximal}. We say that an abelian variety is \emph{extremal} if it is either minimal or maximal.
Such extremal varieties are special, as the following result shows.

\begin{lemma}\label{lem:poonen en co}
	If $A$ is an extremal abelian variety over $\Fpt$, then $A$ is superspecial.
\end{lemma}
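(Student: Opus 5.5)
We want to show: if $A/\Fpt$ is extremal, then $A$ is superspecial. The plan is to pin down the Weil polynomial completely, deduce that Frobenius acts as an integer, and then use Tate's theorem and a classical result on $p$-divisible groups.

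\textbf{Step 1: the Weil numbers are forced.} Write $q = p^2$, so $\sqrt{q} = p$, and write
\[
    \phi_A(x) = \prod_{i=1}^{2g}(x-\alpha_i), \qquad |\alpha_i| = p .
\]
Since $\phi_A$ has real coefficients, the $\alpha_i$ come in complex-conjugate pairs. Hence
\[
    |A(\Fpt)| = \phi_A(1) = \prod_i |1-\alpha_i| .
\]
Each factor satisfies $p-1 \le |1-\alpha_i| \le p+1$. Equality on the left holds only for $\alpha_i = p$, and on the right only for $\alpha_i = -p$. So $|A(\Fpt)| = (p-1)^{2g}$ forces every $\alpha_i = p$, and $|A(\Fpt)| = (p+1)^{2g}$ forces every $\alpha_i = -p$. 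In other words, $\phi_A(x) = (x \mp p)^{2g}$.

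\textbf{Step 2: Frobenius is an integer.} By Tate's theorem, $\End(A)\otimes\mathbb{Q}$ is semisimple and $\pi$ is a semisimple element. Its minimal polynomial divides $(x\mp p)^{2g}$ and has no repeated roots, so it is $x \mp p$. Therefore $\pi = [\pm p]$ as endomorphisms of $A$.

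\textbf{Step 3: from $\pi = [\pm p]$ to superspecial.} The first consequence is supersingularity. All Weil numbers are $\pm p$, so their $p$-adic slopes are all $1/2$, and $A$ is supersingular. Equivalently, $A$ is isogenous to $E^g$, where $E/\Fpt$ is a supersingular curve with Frobenius $\pm p$.

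For superspecial we need an isomorphism, so we use the $p$-divisible group; this is the key point. Consider the Dieudonné module $M$ of $A[p^\infty]$ over $W(\Fpt)$. The relation $\pi = F^2 = \pm p$ (with $\pi$ the $\Fpt$-linear Frobenius) gives
\[
    F^2 = \pm p = \pm FV ,
\]
hence $F = \pm V$ on $M$, since $F$ is injective. Therefore
\[
    FM = VM, \qquad a(A) = \dim_k M/(FM+VM) = \dim M/FM = g .
\]
So the $a$-number of $A$ is $g$. By Oort's theorem, $a(A) = g$ is equivalent to $A_{\Fpbar} \cong E^g$ for a supersingular $E$, which is exactly superspeciality.

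Alternatively, one can cite the result of \cite[Thm. 5.3]{max_abvars} directly, since it is the statement invoked in the introduction.

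\textbf{Main obstacle.} I expect Step 3 to be the hardest, namely the passage from ``isogenous to $E^g$'' to ``isomorphic to $E^g$''. Step 1 is elementary and Step 2 is just Tate's theorem. If the Dieudonné argument is too heavy for the paper, I would instead cite Oort's characterisation of superspecial varieties via $a$-number together with the observation $F = \pm V$, or simply refer to \cite{max_abvars}.
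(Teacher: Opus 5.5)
Your proof is correct, but it takes a genuinely different route from the paper's. The paper's proof is just the citation you offer at the end. By \cite[Thm.~5.3]{max_abvars}, a maximal (resp.\ minimal) abelian variety over $\Fpt$ is isomorphic, already over $\Fpt$, to a product of maximal (resp.\ minimal) elliptic curves. These curves are supersingular, so $A$ is superspecial.

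You instead unpack the mechanism in three steps:
\begin{enumerate}
\item The extremal point count forces every Weil number to equal $p$ (resp.\ $-p$).
\item Semisimplicity of $\End^0(A)$ upgrades this to $\pi = [\pm p]$. This is the same observation the paper makes in \Cref{rem:torsion-in-Fpt}.
\item On the Dieudonn\'e module this gives $F = \pm V$, hence $FM = VM$ and $a(A) = g$, and Oort's characterisation finishes the proof.
\end{enumerate}
Whether $\pi$ acts as $F^2$ or $V^2$ depends on the Dieudonn\'e convention. This is harmless, because $V$ is injective as well and the argument is symmetric. The equality $FM = VM$ is also preserved under base change to $W(\Fpbar)$, so the $a$-number is computed correctly.

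The paper's route buys a stronger, $\Fpt$-rational conclusion: $A$ is a product of extremal elliptic curves over $\Fpt$ itself. Your route yields only the geometric isomorphism, which is exactly what the definition of superspecial asks for. In exchange, it replaces that structural theorem by the more classical $a$-number criterion. It also makes explicit that the only input is that Frobenius acts as an integer, so that the $p$-divisible group of $A$ is already superspecial.
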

\begin{proof}
    By \cite[Theorem 5.3]{max_abvars}, a maximal abelian variety $A$ over $\Fpt$
    is isomorphic to a product of maximal, hence supersingular, elliptic curves over $\Fpt$.
    Thus, $A$ is superspecial. The analogous argument holds for minimal abelian varieties.
\end{proof}

\paragraph{Twists.}
An abelian variety that is isomorphic to an abelian surface $A$ over an extension field but not the base field
is called a \emph{twist} of $A$.
Twists are closely related to
automorphisms of the variety~\cite[Ch. X, Thm 2.2]{silverman}. For Jacobians, any twist is induced by a twist of the associated hyperelliptic curve.
Generally, when clear from context, we may refer to
\emph{the} twist $A^t$ when we refer to the quadratic twist induced by $[-1] \in \Aut(A)$.

\section{Supersingular Abelian Surfaces}
\label{sec:structure}
The possible orders and group structures of abelian surfaces over $\Fp$, and the existence
of an abelian surface with a given group structure, are established in several
places throughout the literature. In this section, we list and derive the essential results
on which we base our later supersingularity tests.

Specializing to dimension $g = 2$, our approach is broadly as follows:
we verify supersingularity of an abelian surface by determining the characteristic polynomial $\phi_{A}(x)$.
Although the order and/or group structure of $A(\Fp)$ does not by itself fix the characteristic polynomial,
when combined with the order and/or group structure of either $A(\Fpt)$ or $A^t(\Fp)$,
the characteristic polynomial $\phi_{A}(x)$ is fixed.
We therefore derive the necessary results on the possible shapes
of $\phi_{A}(x)$, the associated orders, and the group structures
of $A$ and its twist.\footnote{
	The notation $A^t$ in this section refers to the
	quadratic twist over the respective field,
	hence may denote different abelian surfaces depending on the base field. Furthermore, $\rho$
	refers to the associated twisting isomorphism.}

\subsection{Group Structures of Abelian Surfaces}\label{subsec:group-structures-surfaces}
We explore the possible group structures of abelian surfaces over $\Fq$ of characteristic $p$.\footnote{
	From here onwards, we assume $p > 5$; there are esoteric abelian surfaces for $p \leq 5$.
}
Surprisingly, they are quite limited, with strong restrictions imposed by both the Weil polynomial
and the Weil bounds.

\paragraph{The Weil Polynomial.} \textcite{rybakov} showed that the Weil polynomial imposes precise
limitations on the possible group structures of principally
polarized abelian varieties over $\Fq$.
We reformulate this result for abelian surfaces.

\begin{lemma}[Cor. 3.1, \cite{rybakov}]\label{lem:probability}
    Let $A$ be an abelian surface over $\Fq$ with characteristic polynomial of Frobenius
    $\phi_A(x) = x^4 + ax^3 - bx^2 + qax + q^2$ with group structure
    \[
            A(\Fq) \cong \ZZ_{n_0} \times \ZZ_{n_1} \times \ZZ_{n_2} \times \ZZ_{n_3},
    \]
    where $n_{i+1} \mid n_i$. Write $\phi_A(1- x)$ as $\lambda_0 + \lambda_1 x + \lambda_2 x^2 + \lambda_3 x^3 + x^4$.
    Then,
    \[
        n_3 \mid \lambda_3, \quad n_2 n_3 \mid \lambda_2, \quad n_1 n_2 n_3 \mid \lambda_1, \quad n_0 n_1 n_2 n_3 \mid \lambda_0.
    \]
\end{lemma}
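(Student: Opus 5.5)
The plan is to realize $A(\Fq)$ as the cokernel of $\pi - 1$ acting on the Tate modules, and to read the divisibilities off the Smith normal form of $\pi - 1$. Fix a prime $\ell \neq p$. Since $\pi$ fixes exactly $A(\Fq)$, we have $A(\Fq)[\ell^\infty] \cong \ker(\pi - 1 \mid A[\ell^\infty])$. By the standard snake-lemma argument on $0 \to T_\ell A \to V_\ell A \to A[\ell^\infty] \to 0$, this kernel is isomorphic to $T_\ell A/(\pi-1)T_\ell A$. The Tate module $T_\ell A$ is free of rank $4$ over $\ZZ_\ell$, and the characteristic polynomial of $\pi$ on it is $\phi_A$. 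Hence $M = \pi - 1$ is a $4\times 4$ matrix over $\ZZ_\ell$ with characteristic polynomial $\det(x - M) = \phi_A(x+1)$.

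Next, we choose a Smith normal form for $M$, say $M = U\,\mathrm{diag}(d_0,d_1,d_2,d_3)\,V$ with $d_3 \mid d_2 \mid d_1 \mid d_0$. Then the $\ell$-part of $n_i$ is $d_i$ up to units. Writing $\det(x - M) = x^4 - e_1x^3 + e_2x^2 - e_3x + e_4$, each coefficient $e_k$ is a sum of principal $k\times k$ minors of $M$. By Cauchy--Binet, every $k\times k$ minor of $M$ is divisible by the product of the $k$ smallest invariant factors. So $d_3 \mid e_1$, $d_2d_3 \mid e_2$, $d_1d_2d_3 \mid e_3$ and $d_0d_1d_2d_3 \mid e_4$.

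Finally, we compare with the statement. We have $\phi_A(1-x) = \det(1 - x - \pi) = \det(-x - M) = x^4 + e_1x^3 + e_2x^2 + e_3x + e_4$ (degree $4$ is even), so $\lambda_3 = e_1$, $\lambda_2 = e_2$, $\lambda_1 = e_3$ and $\lambda_0 = e_4$, up to sign. This gives the $\ell$-parts of the claimed divisibilities.

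The main obstacle is the prime $\ell = p$, where $T_pA$ has rank less than $4$ and the Tate module argument fails. For this prime I would use the Dieudonné module, or the $p$-divisible group, of $A$. The étale part of the $p$-divisible group has Tate module $T_pA$, of rank $r$ equal to the $p$-rank. There $\pi - 1$ acts with characteristic polynomial equal to the unit-root factor $\phi_{\mathrm{et}}(x+1)$ of $\phi_A(x+1)$. The other factor, from the connected part, has all roots of positive valuation and shifted roots $\alpha - 1$ that are units. So the $p$-part of $A(\Fq)$ has at most $r$ cyclic factors, and its invariants divide the corresponding coefficients of the unit-root factor. Since the other factor has unit constant term and $p$-integral coefficients, the divisibilities transfer to the coefficients of $\phi_A(1-x)$; this transfer needs some bookkeeping of $p$-adic valuations of products of polynomials, which I expect to be routine. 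Alternatively, for the supersingular case of interest here, $A(\Fq)$ has no $p$-torsion at all, and for $\ell = p$ the claim is vacuous. Combining all primes $\ell$ gives the lemma.
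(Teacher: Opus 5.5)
Your argument is correct. The paper gives no proof of its own; it imports the statement from \cite{rybakov}. There the divisibilities follow from a finer result: for each prime $\ell$, the Hodge polygon of the $\ell$-part of $A(\Fq)$ lies on or below the $\ell$-adic Newton polygon of $\phi_A(1-x)$. This is a Mazur-type inequality for $1-\pi$ acting on a lattice.

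Your determinantal-divisor observation is exactly the piece of that inequality the corollary needs. The product of the $k$ smallest invariant factors divides every $k\times k$ minor, hence every principal-minor sum $e_k$. So you reach the divisibilities directly, without Newton polygons. Your sign caveat is unnecessary: $\det(x+M)=\sum_k e_k x^{4-k}$ gives $\lambda_{4-k}=e_k$ exactly.

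At $\ell=p$ your route differs from the usual one. The usual route takes the contravariant Dieudonné module $D$ of $A[p^\infty]$, which is free of rank $4$ over $W(\Fq)$. Applying exactness to the separable isogeny $\pi-1$, whose kernel is the constant group $A(\Fq)$, gives $\mathrm{coker}(D(\pi)-1)\cong D(A(\Fq)[p^\infty])$. The same minors argument then applies verbatim, with no transfer step.

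Your étale route instead needs two inputs: the slope factorization $\phi_A=\phi_{\mathrm{et}}\,\phi_{\mathrm{conn}}$ over $\ZZ_p$, and the fact that $\phi_{\mathrm{et}}$ is the characteristic polynomial of $\pi$ on $T_pA$ (itself normally proved via Dieudonné theory). Given those, the bookkeeping you deferred is indeed routine. Put $\psi(x)=\phi_{\mathrm{et}}(1-x)$, $\chi(x)=\phi_{\mathrm{conn}}(1-x)\in\ZZ_p[x]$, and $h_j=\sum_{i\ge j}v_p(n_i)$, which is non-increasing in $j$ and zero for $j\ge r$. Then $v_p(\lambda_j)\ge\min_{s\le j}v_p(\psi_s)\ge h_j$. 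This uses only the $p$-integrality of $\chi$, not its unit constant term.

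Keep this general argument rather than the fallback via supersingularity. The lemma is stated, and applied in \Cref{lem:prob-p+1}, for arbitrary abelian surfaces.
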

Rewriting $\phi_A(1-x)$ allows us to express the coefficients $\lambda_i$ in terms of $a$
and $b$.
\begin{corollary}\label{cor:lam-in-terms-phi}
    Over $\Fq$, given $\phi_A(x) = x^4 + ax^3 - bx^2 + qax + q^2$, then
    \begin{align*}
        \lambda_0 &= q^2 + 1 + a(q+1) - b,\\
        \lambda_1 &= -a(q + 1) - 2(a - b + 2),\\
        \lambda_2 &= 3a - b + 6,\\
        \lambda_3 &= -(a + 4).
    \end{align*}
\end{corollary}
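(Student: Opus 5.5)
The plan is a direct substitution: put $y = 1 - x$ into $\phi_A(y) = y^4 + a y^3 - b y^2 + qa y + q^2$, expand each term in powers of $x$ with the binomial theorem, and read off the coefficients $\lambda_0,\dots,\lambda_3$ as in \Cref{lem:probability}. No earlier result is needed beyond the shape of $\phi_A$ in \eqref{eq:general-char-pol}, with $p$ replaced by $q$.

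First expand the individual pieces:
\begin{align*}
(1-x)^4 &= 1 - 4x + 6x^2 - 4x^3 + x^4,\\
a(1-x)^3 &= a - 3a x + 3a x^2 - a x^3,\\
-b(1-x)^2 &= -b + 2b x - b x^2,\\
qa(1-x) &= qa - qa x,
\end{align*}
together with the constant $q^2$. The leading term is $x^4$ with coefficient $1$, matching the monic normalisation in \Cref{lem:probability}.

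Then collect by degree:
\begin{itemize}
\item The constant term is $1 + a - b + qa + q^2 = q^2 + 1 + a(q+1) - b$.
\item The coefficient of $x$ is $-4 - 3a + 2b - qa$. Splitting $-3a$ as $-a - 2a$ regroups this as $-a(q+1) - 2(a - b + 2)$.
\item The coefficient of $x^2$ is $6 + 3a - b$.
\item The coefficient of $x^3$ is $-4 - a = -(a+4)$.
\end{itemize}
These are exactly the stated $\lambda_i$.

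Nothing here is a real obstacle. The only care needed is the sign bookkeeping, since $(1-x)^k$ alternates in sign and $b$ enters $\phi_A$ with a minus sign, and rewriting the linear coefficient into the paper's grouped form. As a sanity check, $\lambda_0 = \phi_A(1)$ recovers $|A(\Fq)|$, consistent with the preliminaries.
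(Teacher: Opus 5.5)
Your proposal is correct and is the same argument the paper uses. The paper gives no separate proof; it only says the $\lambda_i$ come from rewriting $\phi_A(1-x)$, and your binomial expansion does exactly that. All four coefficients check out, including the regrouping $-4-3a+2b-qa = -a(q+1)-2(a-b+2)$.
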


\subsection{Group Structures of Supersingular \texorpdfstring{Abelian Surfaces}{Abelian Surfaces}}
We refine our characterization of group structures to the case of supersingular abelian surfaces over $\Fp$.
Whereas supersingular elliptic curves over $\Fp$ have only one possible characteristic polynomial and order, and
two possible group structures,
supersingular abelian surfaces now have five different characteristic polynomials and nine different group structures.
Note that in this section we write $b$ for minus the coefficient of $x^2$ \emph{divided by $p$} in the characteristic polynomial of Frobenius of a supersingular abelian surface.

\begin{lemma}\label{lem:supersingular_orders}
	Let $A$ be an abelian surface over $\Fp$. Then $A$ is supersingular if and only if
	its characteristic polynomial of Frobenius is of the form
	\[
		\phi_A(x) = x^4 - b p x^2 + p^2, \qquad b \in \{-2, -1, 0, 1, 2\}.
	\]
	Consequently, the possible orders of supersingular abelian surfaces over $\Fp$ are
	\[
		|A(\Fp)| \  =  \ \begin{cases}
			\quad (p+1)^2 & \text{if } b = -2			\\
			\quad p^2 + p + 1 & \text{if } b = -1		\\
			\quad p^2 + 1 & \text{if } b = 0			\\
			\quad p^2 - p + 1 & \text{if } b = 1		\\
			\quad (p-1)^2 & \text{if } b = 2
		\end{cases}
	\]
\end{lemma}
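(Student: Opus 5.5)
We start from the criterion recalled in the preliminaries: an abelian surface over $\Fp$ is supersingular if and only if its Weil polynomial \eqref{eq:general-char-pol} has trace $a = 0$ and $p \mid b$. The plan is to write $b = b' p$ and show that the Weil bounds force $b' \in \{-2,-1,0,1,2\}$. The order formula then follows from $|A(\Fp)| = \phi_A(1)$. For the converse, a polynomial of the stated form has $a = 0$ and middle coefficient divisible by $p$, so the same criterion gives supersingularity.

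\textbf{Bounding $b'$.} Put $a = 0$ into \eqref{eq:upper-bound-b} and \eqref{eq:lower-bound-b}. This gives $-2p \le b \le 2p$, so $-2 \le b' \le 2$. Note that this step only uses the Weil bounds, not existence of each case.

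\textbf{Orders.} Evaluate $\phi_A(1) = 1 - b'p + p^2$ for each value:
\begin{itemize}
    \item $b'=-2$ gives $(p+1)^2$;
    \item $b'=-1$ gives $p^2+p+1$;
    \item $b'=0$ gives $p^2+1$;
    \item $b'=1$ gives $p^2-p+1$;
    \item $b'=2$ gives $(p-1)^2$.
\end{itemize}

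\textbf{Main obstacle.} The step needing the most care is the supersingularity criterion itself, which the paper only quotes. If it must be justified, I would use the slopes of the Newton polygon. Supersingularity means all slopes equal $1/2$, so every Frobenius eigenvalue $\alpha_i$ satisfies $v_p(\alpha_i) = 1/2$. The coefficients of $\phi_A$ are elementary symmetric functions of the $\alpha_i$, so the $x^3$ coefficient has valuation at least $1/2$ and the $x^2$ coefficient at least $1$. Since these coefficients are integers, $p \mid a$ and $p \mid b$.

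It remains to get $a = 0$ rather than just $p \mid a$. The bound \eqref{eq:bound-a} gives $|a| \le 4\sqrt{p}$, which is less than $p$ for $p > 16$, forcing $a = 0$. For $5 < p \le 16$, $a = \pm p$ is a priori possible, and I would rule it out with \eqref{eq:upper-bound-b}. Taking $|a| = p$ gives $b \le 2p - 2p\sqrt{p} < -p$. Combined with $p \mid b$ this means $b \le -2p$, while \eqref{eq:lower-bound-b} requires $b \ge -2p - p^2/4$. This leaves a finite set of candidates for each small prime. I would eliminate those by hand or by checking that they do not yield genuine $p$-Weil polynomials.

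The converse direction is that slope $1/2$ follows from the Newton polygon of $x^4 - b'px^2 + p^2$. Every nonzero coefficient has the appropriate valuation, so the polygon is the single segment of slope $1/2$.
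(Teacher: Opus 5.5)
Your proof is correct, but it takes a different route from the paper. The paper's proof simply cites \cite{howe2009} for the shape of $\phi_A$ and then evaluates at $x=1$. You instead derive the shape from the slope characterization (all slopes $1/2$, which in dimension $2$ is equivalent to the paper's $p$-rank-$0$ definition) together with the Weil bounds. Slopes give $p \mid a$ and $p \mid b$. Then $|a| \le 4\sqrt{p}$ forces $a=0$ for $p>16$, and with $a=0$ the bounds on $b$ give $|b| \le 2p$.

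The step you left open closes without any hand checking. For $|a| = p$ and $b = -kp$, the bounds \eqref{eq:upper-bound-b} and \eqref{eq:lower-bound-b} require $2\sqrt{p} - 2 \le k \le 2 + p/4$. This interval contains no integer for $p = 7, 11, 13$: it is roughly $[3.29, 3.75]$, $[4.63, 4.75]$ and $[5.21, 5.25]$. So your candidate set is empty. For $p = 5$ the interval does contain $k = 3$, which is exactly the genuine supersingular exception $x^4 \pm 5x^3 + 15x^2 \pm 25x + 25$.

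Your route therefore shows precisely where the standing hypothesis $p > 5$ enters. It also shows that the criterion ``$a = 0$ and $p \mid b$'' quoted in the preliminaries is valid only under that hypothesis; the bare citation hides both points. What the citation supplies in return is existence information: which of the five types actually occur, and which are realized by Jacobians (cf.\ the remark on $b = 1$, $p \equiv 1 \bmod 6$). Your argument does not give this, but the lemma as stated does not require it, since it is an ``if and only if'' on the form of $\phi_A$ with the orders as a consequence.
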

\begin{proof}
	The result on $\phi_A(x)$ follows from \cite{howe2009}. The possible orders are then derived from $|A(\Fp)| = \phi_A(1)$.
\end{proof}
These five types of abelian surfaces will be crucial to many of the results in this work, and we therefore introduce the following notation.

\begin{definition}\label{def:supersingular-type}
	Let $A$ be a supersingular abelian surface over $\Fp$.
	When $\phi_A(x) = x^4 - b p x^2 + p^2$, we say that $A$ is \emph{of type $b$}.
	Furthermore, when $n = |A(\Fp)|$, i.e., $n = p^2 - bp + 1$ for $b \in \{-2, -1, 0, 1, 2 \}$,
	then $n$ is a \emph{supersingular order}.
\end{definition}

\noindent Ignoring the 2-torsion, the group structures of these abelian surfaces are simple:
those with parameter $b = \pm 2$ are rank 2, whereas for $b \in \{-1, 0, 1\}$
the groups are cyclic. Including the 2-torsion, we get the following more precise result.

\begin{theorem}[\cite{zhu2000group}]\label{thm: possible group structures of rational jacobians}
	Let $A /\Fp$ be a supersingular abelian surface with $p > 5$.
	\begin{enumerate}
		\item If $b \in \{-1, 0, 1\}$, then the group structure is cyclic and:
		\begin{enumerate}
			\item if $b = -1$, then $A(\Fp) \cong \ZZ/(p^2 + p + 1)\ZZ,$
			\item if $b = \phantom{-}0$, then  $A(\Fp) \cong \ZZ/(p^2 + 1)\ZZ,$
			\item if $b = \phantom{-}1$, then  $A(\Fp) \cong \ZZ/(p^2 - p + 1)\ZZ,$
		\end{enumerate}
		\item if $b = \pm 2$, then the group structure is of rank at least 2 and:
		\begin{enumerate}
			\item if $b = -2$ and $p \equiv 1 \bmod 4$, then $A(\Fp) \cong (\ZZ/(p+1)\ZZ)^2$,
			\item if $b = -2$ and $p \equiv 3 \bmod 4$, then $A(\Fp)$ is isomorphic to \[(\ZZ/(p+1)\ZZ)^2 \quad \text{ or } \quad \ZZ/(p+1)\ZZ \times \ZZ/(\tfrac{p+1}{2})\ZZ \times \ZZ/2\ZZ \quad \text{ or } \quad  (\ZZ/(\tfrac{p+1}{2})\ZZ \times \ZZ/2\ZZ)^2,\]
			\item if $b = 2$ and $p \equiv 1 \bmod 4$, then $A(\Fp)$ is isomorphic to \[(\ZZ/(p-1)\ZZ)^2 \quad \text{ or } \quad \ZZ/(p-1)\ZZ \times \ZZ/(\tfrac{p-1}{2})\ZZ \times \ZZ/2\ZZ \quad \text{ or } \quad  (\ZZ/(\tfrac{p-1}{2})\ZZ \times \ZZ/2\ZZ)^2,\]
			\item if $b = 2$ and $p \equiv 3 \bmod 4$, then $A(\Fp) \cong (\ZZ/(p-1)\ZZ)^2$.
		\end{enumerate}
	\end{enumerate}
\end{theorem}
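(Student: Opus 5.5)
Throughout, the tool will be \Cref{lem:probability} (Rybakov's divisibility conditions), applied to the characteristic polynomials of \Cref{lem:supersingular_orders}, combined with the Weil-pairing restriction \eqref{lem:weil-pairing}. For $\phi_A(x)=x^4-bpx^2+p^2$ we have $a=0$ and the $x^2$-coefficient is $-bp$. Substituting $a=0$ and $bp$ in place of $b$ in \Cref{cor:lam-in-terms-phi} gives
\[
\lambda_3=-4,\quad \lambda_2=6-bp,\quad \lambda_1=2bp-4,\quad \lambda_0=p^2-bp+1 .
\]
Hence $n_3\mid 4$, $n_2n_3\mid 6-bp$ and $n_1n_2n_3\mid 2bp-4$.

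\textbf{Cyclic cases $b\in\{-1,0,1\}$.} Let $\ell$ be a prime dividing $n_1$, so that $A(\Fp)$ contains $(\ZZ/\ell)^2$. Then $\ell\mid n=p^2-bp+1$ and $\ell\mid 2bp-4$.

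For $b=0$: we get $\ell\mid 4$, so $\ell=2$. But $p^2+1\equiv 2\bmod 4$, so the $2$-part of the order is exactly $2$. Thus $n_1=1$ and the group is cyclic.

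For $b=\pm1$: $\ell\mid 2(p\mp2)$. If $\ell=2$, this contradicts the fact that $p^2\mp p+1$ is odd. If $\ell$ is odd, then $p\equiv\pm2\pmod\ell$, and substituting gives $n\equiv 4-2+1=3\pmod\ell$, so $\ell=3$ and $p\equiv\pm2\equiv\mp1\pmod 3$. With this congruence, check $p^2\mp p+1\bmod 9$: writing $p=\mp1+3t$ yields $n\equiv 3\pmod 9$, so $9\nmid n$ and the $3$-part is cyclic. It remains to confirm $\lambda_2$ gives no extra freedom; this is consistent with the above. Hence the group is cyclic in all three cases.

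\textbf{Cases $b=\pm2$.} Here $\phi_A(x)=(x^2\mp p)^2$. Over $\Fpt$ the Frobenius satisfies $\pi^2=\pm p$. More precisely, for supersingular surfaces one uses that the minimal polynomial of $\pi$ is $x^2\mp p$ (Howe's results / Tate: $\End^0$ is the quaternion-type algebra and $\pi$ is semisimple). Then for $P\in A(\Fp)$ we have $\pi P=P$, so $[\pm p]P=\pi^2P=P$. Hence the exponent divides $p\mp1$.

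The order is $(p\mp1)^2$. Since the exponent divides $p\mp1$ and the group is generated by at most $4$ elements, the rank is at least $2$. For odd primes $\ell$, \eqref{lem:weil-pairing} gives $\ell$-rank at most $2$ whenever $\ell\nmid p-1$. When $\ell\mid p-1$ (the case $b=2$) with $\ell$ odd, we instead use $n_2n_3\mid 6-2p=-2(p-3)$: since $\ell\mid p-1$, $\ell\nmid p-3$ for $\ell$ odd, so $n_2$ is a power of $2$. The case $b=-2$ is analogous: $n_2n_3\mid 6+2p=2(p+3)$, and $\ell\mid p+1$ with $\ell$ odd forces $\ell\nmid p+3$. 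So the odd part is $(\ZZ/m)^2$ with $m$ the odd part of $p\mp1$.

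\textbf{The $2$-part.} This is the main obstacle. The exponent divides $p\mp1$ and the total $2$-part has order $2^{2v}$ with $v=v_2(p\mp1)$. Also $n_3\mid 4$ and $n_2n_3\mid 2(p\pm3)$.

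If $b=-2$ and $p\equiv1\bmod4$, then $v=1$. The $2$-part has order $4$ and exponent dividing $2$, so it is $(\ZZ/2)^2$, i.e.\ $(\ZZ/(p+1))^2$. The case $b=2$, $p\equiv3\bmod4$ is symmetric.

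In the remaining cases $v\ge2$ and $p\pm3\equiv 2\bmod 4$, so $v_2(n_2n_3)\le 2$. The $2$-part is $\ZZ/2^{e_0}\times\ZZ/2^{e_1}\times\ZZ/2^{e_2}\times\ZZ/2^{e_3}$ with $e_i\le v$, $\sum e_i=2v$ and $e_2+e_3\le2$. It remains to exclude options like $\ZZ/2^{v}\times\ZZ/2^{v-2}\times\ZZ/2\times\ZZ/2$, or $e_2=2,e_3=0$. Since $e_0,e_1\le v$ and $\sum e_i=2v$, we need $e_0+e_1\ge 2v-2$. The sub-case $e_2=2,e_3=0$ violates $e_2\le e_1$ only if $v=2$ and $e_1=0$; I would check this and similar sub-cases by hand. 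The three listed structures are $(v,v,0,0)$, $(v,v-1,1,0)$ and $(v-1,v-1,1,1)$; the others, $(v,v-2,2,0)$ and $(v,v-2,1,1)$, must be excluded. For this I would use $\lambda_1$: $n_1n_2n_3\mid 4(p\mp1)$ with $v_2(4(p\mp1))=v+2$. For the unwanted shapes, $e_1+e_2+e_3=v$, so they are allowed by $\lambda_1$. Hence I expect to exclude them via the isogeny $\pi\mp1$ structure instead: $A(\Fp)=\ker(\pi-1)$ and $\pi^2=\pm p$, so $A(\Fp)\supseteq$ the image of $\pi+1$ on $A[p\mp1]$, whose kernel is controlled. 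Alternatively, I would appeal directly to Zhu's explicit computation~\cite{zhu2000group}.
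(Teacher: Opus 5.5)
\textbf{There is a genuine gap: the $2$-primary part for $b=\pm2$ when $v=v_2(p\mp1)\ge 3$ is not proved.}

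What does work: your derivation of the cyclic types $b\in\{-1,0,1\}$ from \Cref{lem:probability} is complete and correct. The paper gives no argument of its own here and simply cites \cite{zhu2000group}, so this part is a genuine self-contained route. For $b=\pm2$, the exponent bound via $\pi^2=[\pm p]$ and the odd part are also fine, and your constraints already settle $v\le 2$.

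For $v\ge3$ you stop at ``I expect to exclude them'' and fall back on Zhu, which is the statement being proved. Your list of surviving shapes is also incomplete. Besides $(v,v-2,2,0)$ (for $v\ge4$) and $(v,v-2,1,1)$ (for $v\ge3$), the shape $(v-1,v-1,2,0)$ (for $v\ge3$) satisfies every constraint you derived.

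No refinement of the coefficient argument can remove these shapes. All three satisfy $n_3\mid4$, $v_2(n_2n_3)\le2$, $v_2(n_1n_2n_3)\le v+2$ and $e_0\le v$, so you need information about $\pi$ beyond its characteristic polynomial. There is also a sign slip: $\lambda_2=\mp2(p\mp3)$ for $b=\pm2$, and with your $p\pm3$ the claim ``$\equiv 2 \bmod 4$'' is false.

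The pairing bound \eqref{lem:weil-pairing} only helps partially. For $b=-2$ it gives $e_2\le v_2(p-1)=1$, which removes the two shapes with $e_2=2$. For $b=2$, however, $2^v\mid p-1$, so it gives nothing. The shape $(v,v-2,1,1)$ survives it in both cases.

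Your hinted idea (image of $\pi+1$ on $A[p\mp1]$) can be completed as follows.
\begin{itemize}
\item Set-up: let $T=T_2(A)$ and let $\Pi$ be the Frobenius. Then $A(\Fp)[2^\infty]\cong T/(1-\Pi)T$ and $A^t(\Fp)[2^\infty]\cong T/(1+\Pi)T$.
\item Duality: $(1-\Pi)(1+\Pi)=1\mp p$ is $2^v$ times a unit. So if $1-\Pi$ has elementary divisors $2^{e_i}$, then $1+\Pi$ has elementary divisors $2^{v-e_i}$. This is the content of $(\pi+1)A[p\mp1]=A(\Fp)$ with kernel $A^t(\Fp)^\rho$, cf.\ \Cref{lem:trace-pairing}.
\item Step (i): $A[2](\Fp)=A^t[2](\Fp)$ forces $\#\{i:e_i>0\}=\#\{i:e_i<v\}$. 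This rules out $(v,v-2,1,1)$ and $(v-1,v-1,2,0)$.
\item Step (ii): inside $A[4]$ we have $A(\Fp)[4]\cap A^t(\Fp)^\rho[4]=A[2](\Fp)$, hence $|A(\Fp)[4]|\cdot|A^t(\Fp)[4]|\le 2^8\,|A[2](\Fp)|$. For the self-dual shape $(v,v-2,2,0)$ this reads $2^{12}\le 2^{11}$, a contradiction.
\end{itemize}
Only the three listed structures remain, and this argument does not even use the principal polarization.
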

\begin{remark}
\noindent By \cite{howe2009}, for every prime $p > 5$ there exists a supersingular Jacobian $\J$
with such a group structure, except for the case
$b = 1$ for primes $p \equiv 1 \bmod 6$.
	Note that the reverse of
	\Cref{thm: possible group structures of rational jacobians} is \emph{not} true:
	such group structures exist for non-supersingular Jacobians $\J/\Fp$ too.\footnote{
	For example, for $p = 7$, the Jacobian $\J$ of the hyperelliptic curve defined by
	$y^2=4 x^6+3 x^5+x^4+5 x^3+6 x^2+5$ satisfies $\J(\Fp) \cong \ZZ/(p^2 + 1)\ZZ$.}
\end{remark}
Using \Cref{thm: possible group structures of rational jacobians}, we can derive the group structure up to the rank of the 2-torsion
given only the order of a supersingular Jacobian.
We derive the rank of the 2-torsion $\J[2](\Fp)$ using the factorization of the polynomial defining the associated hyperelliptic curve.\footnote{This rank is important in later sections,
as it decides isotropy behaviour of eigenspaces, see e.g., \Cref{sec:isotropy-conditions-min-maxc,rem:2-rank from f}.}
We say that a non-zero polynomial $f \in \Fp[x]$ has factorization type $\{ d_1, \ldots d_N \}$ over $\Fp$ if it factors as $f = c\prod_{i = 1}^N f_i$ where the factors $f_i$ are distinct, monic, irreducible over $\Fp$, and of degree $d_i$, and $c \in \Fp$ is the leading coefficient.

\begin{lemma}[\cite{two-torsion}]\label{lem: factoring fx gives 2 rank}
	Let $\J$ be the Jacobian over $\Fp$ of a genus-2 hyperelliptic curve $C$ defined by $y^2 = f(x)$, with $\deg f \in \{5, 6\}$.
	Given the factorization $f(x) = c \prod_{i=1}^N f_i(x)$, with $f_i$ distinct monic irreducible polynomials over $\Fp$
	and leading coefficient $c$,
	let $d_i$ denote the degree of $f_i$, so that $\{ d_1, d_2, \ldots d_N \}$ is the factorization type of $f$ over $\Fp$.
	Then $\J[2](\Fp)$ is isomorphic to
	\begin{itemize}
		\item $\langle 1 \rangle$, if $f$ is irreducible, or $\deg(f) = 6$ and $f$ factors as $\{5,1\}, \{3,3\}$;
		\item $(\Z/2\Z)$, if $\deg(f)=6$ and $f$ factors as $\{4, 1, 1\}, \{3, 2, 1 \}, \{4,2\}$, or $\deg(f) = 5$ and $f$ factors as $\{4,1\}, \{3,2\}$;
		\item $(\Z/2\Z)^{2}$, if $\deg(f)=6$ and $f$ factors as $\{3, 1, 1, 1\}, \{2, 2 ,1, 1\}, \{2,2,2\}$, or $\deg(f) = 5$ and $f$ factors as $\{3,1,1\}, \{2,2,1\}$;
		\item $(\Z/2\Z)^{3}$, if $\deg(f)=6$ and $f$ factors as $\{2, 1, 1, 1, 1\}$, or $\deg(f) = 5$ and $f$ factors as $\{2, 1, 1, 1\}$;
		\item $(\Z/2\Z)^{4}$, if $f$ splits into linear factors.
	\end{itemize}
\end{lemma}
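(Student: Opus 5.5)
The plan is to describe $\J[2]$ explicitly through the Weierstrass points of $C$, and then read off the Galois-fixed part from the factorization type of $f$.

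\emph{Step 1: the $2$-torsion as a Galois module.} Let $W=\{\alpha_1,\dots,\alpha_6\}$ be the Weierstrass points: the roots of $f$ over $\Fpbar$, together with the point at infinity $\infty$ when $\deg f=5$. The classes $[P_i-P_j]$ with $P_i,P_j\in W$ generate $\J[2]$. There is a standard identification
\[
\J[2]\cong \{S\subseteq W : |S| \text{ even}\}/\{\emptyset, W\},
\]
with symmetric difference as the group law. The only relation $\sum_{P\in W}P\sim 3D_\infty$ (respectively $6\infty$) is what forces us to identify $W$ with $\emptyset$. This identification is Galois-equivariant, with $\Gal(\Fpbar/\Fp)$, i.e.\ the $p$-Frobenius $\sigma$, permuting $W$. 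So $\J[2](\Fp)$ is the $\sigma$-fixed part of this $\mathbb{F}_2$-vector space. The permutation of $W$ has cycle type exactly the factorization type of $f$; when $\deg f=5$ we add one extra fixed point for $\infty$.

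\emph{Step 2: counting fixed points.} Write $V=\mathbb{F}_2^W$ with orbits $O_1,\dots,O_M$, let $E\subset V$ be the even-weight subspace, and let $\mathbf 1=W$. We want $\dim (E/\langle \mathbf 1\rangle)^\sigma$, and the short exact sequence
\[
0\to\langle\mathbf 1\rangle\to E\to E/\langle\mathbf 1\rangle\to 0
\]
gives
\[
0\to \mathbb{F}_2\to E^\sigma\to (E/\langle \mathbf 1\rangle)^\sigma\to H^1(\langle\sigma\rangle,\mathbb{F}_2)\to H^1(\langle\sigma\rangle,E).
\]
Rather than using cohomology, we compute directly. $E^\sigma$ is spanned by unions of orbits of even total size, which has dimension $M-1$ if some orbit has odd size and $M$ otherwise. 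An extra fixed class is a set $S$ with $\sigma S=S\triangle W$, i.e.\ $\sigma S = W\setminus S$. Such an $S$ exists if and only if every orbit has even size, because $\sigma$ must swap $S\cap O_i$ and its complement inside each orbit. In that case we take $S$ to be alternate elements of each cycle. We must also check $|S|$ even, which holds exactly when the number of orbits $k$ with $|O_k|\equiv 2 \bmod 4$ is even.

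\emph{Step 3: the resulting formula.} We get $\dim \J[2](\Fp)=M-2$ if some orbit is odd. If all orbits are even, we get $M-1$, plus $1$ if the alternating set $S$ is even. Checking the list: $\{5,1\}$ gives $0$; $\{3,3\}$ gives $0$; $\{4,2\}$ gives $M-1=1$ with $S$ of size $2+1=3$ odd, so $1$; $\{2,2,2\}$ gives $2$ with $|S|=3$ odd, so $2$; $\{6\}$ gives $0$ with $|S|=3$ odd, so $0$; $\{4,1,1\}$ gives $1$, and so on. For $\deg 5$, adjoining the fixed point $\infty$ gives $\{4,1\}\to\{4,1,1\}$, $\{3,2\}\to\{3,2,1\}$, $\{5\}\to\{5,1\}$. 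Every orbit type in the listed cases then matches the dimensions claimed.

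The main obstacle is Step 1: justifying that the Galois-equivariant identification is exact, in particular that the only relation among the $[P_i-P_j]$ is the full sum. I would cite Mumford's description (or Cassels--Flynn, Ch.~8) rather than reprove it. The remaining work is the bookkeeping for the all-even-orbits case, which accounts for the entries $\{3,3\}$ versus $\{2,2,2\}$ and $\{6\}$.
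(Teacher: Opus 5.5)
Your argument is correct. It is also more than the paper gives: the paper does not prove this lemma and simply imports it from \cite{two-torsion}. You derive it from the Galois-equivariant identification $\J[2]\cong E/\langle \mathbf{1}\rangle$ via Weierstrass points, which is the standard route. Citing Mumford or Cassels--Flynn for that identification is appropriate; note that it is unaffected by whether the points at infinity are rational, since $D_\infty$ is always an $\Fp$-rational divisor.

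You can simplify your case analysis: the twisted fixed classes never occur in genus $2$. If $\sigma S = W\setminus S$, then $|S| = |W\setminus S|$, so $|S| = 3$. That is odd, so $S\notin E$. Your mod-$4$ check is therefore always negative, and the whole computation collapses to one formula. With $M$ the number of Frobenius orbits on $W$ (counting $\{\infty\}$ when $\deg f = 5$),
\[
\dim_{\mathbb{F}_2}\J[2](\Fp) = M-2 \text{ if some orbit has odd length, and } M-1 \text{ otherwise.}
\]
Checking this against the eleven partitions of $6$, and the seven partitions of $5$ extended by $\infty$, reproduces exactly the list in the statement. In particular it explains why $\{3,3\}$ and $\{6\}$ are trivial while $\{2,2,2\}$ and $\{4,2\}$ are not.

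Compared with the bare citation, your derivation also shows where the genus enters. For general $g$ one has $|W| = 2g+2$ and the twisted condition forces $|S| = g+1$. This is even when $g$ is odd, so in odd genus the extra class genuinely contributes when all orbits have even length. For example, for a genus-$1$ quartic model with $f$ irreducible, one nontrivial $2$-torsion point is rational. So the clean genus-$2$ formula above should not be transplanted naively to other genera.
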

Recall that we generally need two evaluations of $\phi_A(x)$ to reconstruct $\phi_A(x)$,
which we use to verify the supersingularity of abelian surface.
While a first evaluation is given by the order $|A(\Fp)| = \phi_A(1)$,
we can get a second evaluation of $\phi_A(x)$ by deriving the order of the twist,
where we use that supersingular abelian varieties have trace $a=0$.

\begin{lemma}\label{lem: the twist is enough}
	An abelian surface $A/\Fp$ has trace $a = 0$ iff $|A(\Fp)| = |A^t(\Fp)|$.
\end{lemma}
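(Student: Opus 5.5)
The plan is to compute the Weil polynomial of the quadratic twist $A^t$ explicitly in terms of that of $A$, and then compare the two point counts $\phi_A(1)$ and $\phi_{A^t}(1)$.

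\textbf{Step 1: the twist's Weil polynomial.} Let $\rho\colon A \to A^t$ be the twisting isomorphism, defined over $\mathbb{F}_{p^2}$ and induced by $[-1]\in\Aut(A)$. Writing $\pi$ and $\pi_t$ for the $p$-Frobenius endomorphisms of $A$ and $A^t$, the twisting cocycle gives $\rho^{-1}\pi_t\rho = -\pi$. Hence the Frobenius eigenvalues of $A^t$ are $-\alpha_1,\dots,-\alpha_4$, and
\[
\phi_{A^t}(x)=\prod_{i=1}^4 (x+\alpha_i)=\phi_A(-x)=x^4-ax^3-bx^2-apx+p^2 .
\]
In words, the twist flips the sign of the odd coefficients and leaves the even ones unchanged.

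\textbf{Step 2: compare point counts.} Using property (2) of the Weil polynomial, we have $|A(\Fp)|=\phi_A(1)$ and $|A^t(\Fp)|=\phi_{A^t}(1)=\phi_A(-1)$. Therefore
\[
|A(\Fp)|-|A^t(\Fp)| = \phi_A(1)-\phi_A(-1) = 2a(p+1).
\]
Since $p+1\neq 0$, this difference vanishes if and only if $a=0$, which proves both directions of the equivalence at once.

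\textbf{Main obstacle.} The only step that needs care is the claim $\phi_{A^t}(x)=\phi_A(-x)$. I would justify it by computing how Galois acts on points transported through $\rho$: for $P\in A^t(\Fpbar)$ one has $\pi_t(P)=\rho(\sigma(\rho^{-1})\,\pi(\rho^{-1}P))$, and the cocycle satisfies $\rho^{-1}\circ{}^\sigma\rho=[-1]$. Alternatively, one can argue via the $\ell$-adic Tate module, where the twisted Galois action differs from the original by the character with values in $\{\pm1\}$ attached to $[-1]$.
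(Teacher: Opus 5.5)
Your proposal is correct and follows the paper's proof. Both arguments use $\phi_{A^t}(x)=\phi_A(-x)$ and compare $\phi_A(1)$ with $\phi_A(-1)$, and your identity $\phi_A(1)-\phi_A(-1)=2a(p+1)$ handles both directions at once. The paper simply cites Sutherland for $\phi_{A^t}(x)=\phi_A(-x)$ rather than proving it. If you keep your sketch of that fact, write the transport formula as $\pi_t={}^{\sigma}\rho\circ\pi\circ\rho^{-1}=\rho\circ(\rho^{-1}\circ{}^{\sigma}\rho)\circ\pi\circ\rho^{-1}$, since the version you wrote does not typecheck.
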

\begin{proof}
	We get $\phi_{A^t}(x) = \phi_{A}(-x)$ by \cite[Lem. 3]{sutherland2009}. For
		\[
		\phi_A(x) = x^4 + ax^3 - bx^2 + apx + p^2
	\]
		the equality of $\phi_A(1) = |A(\Fp)|$ and $\phi_{A^t}(1) = |A^t(\Fp)|$ implies
	$a + ap = -a - ap$, thus $a = 0$. Conversely, if $a = 0$, we get $\phi_A(x) = \phi_A(-x) = \phi_{A^t}(x)$.
\end{proof}

\begin{remark}\label{rem:twist-iso}
	As $A[2](\Fp) = A^t[2](\Fp)$, we may combine the above result with \Cref{thm: possible group structures of rational jacobians}
	to derive the stronger result that for supersingular $A$,
	as $a = 0$, we get $A(\Fp) \cong A^t(\Fp)$.
\end{remark}

\subsection{Uniqueness of Supersingular Orders}\label{sec:unique-orders}
From \Cref{lem:supersingular_orders}, we know that if $A/\Fp$ is supersingular,
then its order $n$ is of the form $n = p^2 - bp + 1$ for $b \in \{-2, -1, 0, 1, 2\}$.
The converse is (almost) true, except we have to look over $\Fpt$:
if $A/\Fp$ is an abelian surface with $|A(\Fpt)| = n^2$ for a supersingular order $n$, then $A$ is supersingular.
The goal of this section will be proving this result, which is a key ingredient in our supersingularity testing.

At a high level, our argument is as follows.
We first observe that even though for a supersingular order $n$ the equality $|A(\Fp)| = n$ can be satisfied by both
supersingular and non-supersingular abelian surfaces,
for the cases $b \in \{ \pm 1, \pm 2\}$
the equality $|A(\Fpt)| = n^2$ is only satisfied by supersingular abelian surfaces.
The case $b = 0$ is more nuanced: there exist non-supersingular abelian surfaces $A$
with $|A(\Fpt)| = (p^2 + 1)^2$, but such abelian surfaces are never defined over $\Fp$.
More precisely, we show that any abelian surface $A / \Fp$ with $|A(\Fpt)| = n^2$ is supersingular.

In this section, we often require the Weil polynomial of an abelian surface $A$ over
both $\Fp$ and $\Fpt$.
When the abelian surface $A$ is clear from context, we write the Weil polynomial over $\Fp$ as
\[
	\phi_p(x) = x^4 + a_p x^3 - b_p x^2 + a_p p x + p^2
\]
and the Weil polynomial over $\Fpt$, using $q = p^2$, as
\[
	\phi_q(x) = x^4 + a_q x^3 - b_q x^2 + a_q p^2 x + p^4.
\]
Note that $a_p$, $b_p$ and $a_q$, $b_q$ denote the (unnormalised) coefficients of $\phi_p$ and $\phi_q$ as displayed, so that $A$ is of type $b = b_p/p$ in the sense of \Cref{def:supersingular-type}.
In the proofs of \Cref{lem:order-p2-p-1} and \Cref{lemma:order-p2-1}, it is convenient to work with the
$x^2$-coefficients $c_p \colonequals -b_p$ and $c_q \colonequals -b_q$.

\paragraph{The case $\mathbf{b = \pm 2}$.}
\Cref{lem:poonen en co} resolves the case $b = \pm 2$: any abelian surface $A$
with $|A(\Fpt)| = (p \pm 1)^4$ is either minimal or maximal, and is thus supersingular
(and even superspecial).

\paragraph{The case $\mathbf{b = \pm 1}$.}
We derive the result
from the central requirement that $\phi_A(1) = n^2$ over $\Fpt$, with $n = p^2 \pm p + 1$, together with the Weil bounds.

\begin{lemma}\label{lem:order-p2-p-1}
	Let $n = p^2 \pm p + 1$ and let $A$ be an abelian surface such that $|A(\Fpt)| = n^2$.
	Then the Weil polynomial
	of $A$ over $\Fpt$ is \[(x^2 \pm px +p^2)^2 = x^4 \pm 2px^3 + 3p^2x^2 \pm 2p^3x + p^4,\] and in particular $A$
	is supersingular. Moreover, if $A$ is defined over $\Fp$, then $A$ is of type $b = \mp 1$.
\end{lemma}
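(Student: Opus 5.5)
We need to show that, with $q=p^2$, the Weil polynomial $\phi_q(x)=x^4+a_qx^3+c_qx^2+a_qqx+q^2$ must equal $(x^2\pm px+p^2)^2$. The plan is to turn the condition $\phi_q(1)=n^2$ into one linear equation in $(a_q,c_q)$, and then use the Weil bounds \eqref{eq:bound-a}--\eqref{eq:lower-bound-b} for $q=p^2$ to show that this equation has a unique admissible integer solution.

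First, write $\phi_q(1)=(q+1)^2 - 2q + a_q(q+1) + c_q = n^2$. With $n=p^2\pm p+1$ we have
\[
n^2=(p^2+1)^2 \pm 2p(p^2+1) + p^2 ,
\]
and subtracting gives
\[
a_q(p^2+1) + c_q = \pm 2p(p^2+1) + 3p^2 .
\]
So $c_q = 3p^2 + (p^2+1)(\pm 2p - a_q)$. Set $k=a_q\mp 2p$, so that $c_q = 3p^2 - k(p^2+1)$. It remains to show $k=0$.

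The Weil bounds, rewritten in terms of $c_q=-b_q$, say
\[
-2q+2\sqrt q|a_q| \le c_q \le 2q+\tfrac14 a_q^2 , \qquad |a_q|\le 4p .
\]
\begin{itemize}
\item \emph{Case $k\ge 1$.} Then $c_q\le 3p^2-(p^2+1)<2p^2$. The lower bound $c_q\ge -2p^2+2p|a_q|$ then forces $2p|a_q|<4p^2+\dots$. More precisely, with $a_q=\pm2p+k$ we get
\[
3p^2-k(p^2+1)\ \ge\ -2p^2+2p\,\bigl|\pm 2p+k\bigr| .
\]
For the ``$+$'' sign and $k\ge1$ this reads $5p^2-k(p^2+1)\ge 4p^2+2pk$, which is impossible for $k\ge1$. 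For the ``$-$'' sign, $a_q=-2p+k$ and $|a_q|\ge 2p-k$ when $k\le 2p$, which gives $5p^2-k(p^2+1)\ge 4p^2-2pk$, i.e.\ $p^2\ge k(p^2-2p+1)$, which is impossible for $k\ge2$. The remaining value $k=1$ needs a separate check: then $c_q=2p^2-1$ and $|a_q|=2p-1$, and the inequality becomes $2p^2-1\ge -2p^2+4p^2-2p$, which holds. So the Weil bounds alone may not exclude $k=1$, and here I would invoke \Cref{lem:probability}: $\lambda_3=-(a_q+4)$ must be divisible by $n_3$, and so on. The cleaner route is to factor instead. Write $\phi_q(x)=x^2\,h(x+q/x)$ with
\[
h(t)=t^2+a_qt+(c_q-2q),
\]
whose roots $t_i$ are real and satisfy $|t_i|\le 2p$. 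The condition becomes $h(q+1)$ fixed, i.e.\ $(q+1-t_1)(q+1-t_2)=n^2$ with $t_1+t_2=-a_q$.
\item \emph{Case $k\le -1$.} Apply the same factorization argument.
\end{itemize}

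So the main step I would actually commit to is the real Weil polynomial argument. For $t_i\in[-2p,2p]$, the function $t\mapsto\log(q+1-t)$ is concave. For a fixed product $(q+1-t_1)(q+1-t_2)=n^2$, the values $q+1-t_i$ lie in $[(p-1)^2,(p+1)^2]$. Integrality constraints then apply: $t_1+t_2$ and $t_1t_2$ are integers. Moreover, $\phi_q(x)\equiv$ something mod $p$ forces the Frobenius $p$-adic valuations. The key obstacle is ruling out the non-supersingular solutions $t_{1,2}=\mp p\pm\varepsilon$. I would handle it with the divisibility trick: write $u_i=q+1-t_i$, so $u_1u_2=n^2$ and $n^2-(u_1+u_2)n\dots$. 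Alternatively, note that $u_1+u_2\ge 2n$ by AM--GM, with equality iff $u_1=u_2=n$. The bound $|t_i|\le2p$ caps $u_1+u_2$, and the integrality of $u_1+u_2$ together with $u_1u_2=n^2$ restricts it further. Perturbing $u_1=n+d$, $u_2=n^2/(n+d)$ requires $(n+d)\mid n^2$ with $|d|\le p\pm p$-range of size about $3p$, while $n\approx p^2$. The divisors of $n^2$ near $n$, within distance $O(p)$, must be checked: $n+d\mid n^2$ means $n+d\mid d^2$, and since $d^2\le 9p^2$ is less than about $9(n+d)$, this forces $d^2=m(n+d)$ with small $m$. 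A finite case analysis in $m$, using $n=p^2\pm p+1$ and $p>5$, rules these out. This yields $d=0$, hence $t_1=t_2=\mp p$, giving $\phi_q=(x^2\pm px+p^2)^2$.

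Supersingularity follows since $a_q=\pm 2p$ and $p\mid c_q$ (equivalently, all Weil numbers are $p$ times roots of unity). Finally, if $A$ is defined over $\Fp$, the Frobenius eigenvalues $\alpha_i$ of $\phi_p$ satisfy $\alpha_i^2$ = roots of $x^2\pm px+p^2$. By \Cref{lem:supersingular_orders}, $\phi_p=x^4-bpx^2+p^2$, so $\phi_q(x)=(x^2-(b p)x\cdot\ldots)$. Concretely, $\phi_q(x^2)=\phi_p(x)\phi_p(-x)$ gives $\phi_q(x)=(x^2-bpx+p^2)^2$. Comparing with $(x^2\pm px+p^2)^2$ yields $b=\mp1$.
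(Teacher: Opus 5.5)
Your reduction to $a_q(p^2+1)+c_q=\pm2p(p^2+1)+3p^2$ is correct. So is your use of the lower Weil bound to exclude $k\ge1$ for the ``$+$'' sign and $k\ge2$ for the ``$-$'' sign. The argument you finally commit to, however, does not work. You write $u_1=n+d$ and conclude $(n+d)\mid n^2$, which treats $u_i=q+1-t_i$ as rational integers. But $t_1,t_2$ are the roots of $h(t)=t^2+a_qt+(c_q-2q)$, and these are in general conjugate irrational quadratic integers. Only $u_1+u_2=2(q+1)+a_q$ and $u_1u_2=n^2$ are integers, so no divisibility statement is available.

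Consequently the branch $k\le-1$, which you defer to this argument, is never closed. It needs no new idea. The upper bound $c_q\le 2q+\tfrac14a_q^2$, substituted into your linear relation, gives $a_q\ge 2p$ resp.\ $a_q\ge-2p$, i.e.\ $k\ge0$; this is the paper's first step. Your AM--GM idea can also be made rigorous. Write $u_1+u_2-2n=(\sqrt{u_1}-\sqrt{u_2})^2$; with $u_i\in[(p-1)^2,(p+1)^2]$ this is at most $p^2/(p\pm1)^2$. For the ``$+$'' sign that bound is $<1$, so integrality forces $u_1=u_2=n$ and $t_1=t_2=-p$. This is a tidy alternative to the paper's computation.

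For the ``$-$'' sign the same bound is $p^2/(p-1)^2\in(1,2)$. Your leftover case $k=1$, i.e.\ $(a_q,c_q)=(1-2p,\,2p^2-1)$, cannot be excluded by any argument over $\Fpt$:
it satisfies all three Weil bounds;
it gives $\phi_q(1)=(p^2-p+1)^2$;
$h(t)=t^2+(1-2p)t-1$ has both roots in $(-2p,2p)$;
and $p\nmid 2p^2-1$.
So this is an ordinary $p^2$-Weil polynomial, and by Honda--Tate it is the Frobenius polynomial of an ordinary abelian surface over $\Fpt$. The isogeny class is simple ordinary with $a_q^2-c_q=2(p-1)^2\neq p^2$, so Howe's criterion even places a genus-$2$ Jacobian in it. For $p=7$ this is $x^4-13x^3+97x^2-637x+2401$, which has $1849=43^2$ points.

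So the first assertion of the statement is false for $n=p^2-p+1$. The paper's ``change of sign'' hides precisely this: its bound $a_q\le 2p+p^2/(p+1)^2$ becomes $a_q\le -2p+p^2/(p-1)^2$, and that fraction exceeds $1$. You were right to flag $k=1$, but no finite case analysis will remove it.

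What can be proved is the following.
\begin{itemize}
\item The ``$+$'' sign in full.
\item The ``$-$'' sign when $A$ is defined over $\Fp$. From $a_q=2c_p-a_p^2$ and $c_q=c_p^2-2pa_p^2+2p^2$, oddness of $a_q$ and $c_q$ forces $a_p$ and $c_p$ to be odd. Hence $a_q\equiv1\pmod4$, contradicting $1-2p\equiv3\pmod4$.
\end{itemize}
Once $k=1$ is excluded this way, your final step via $\phi_q(x^2)=\phi_p(x)\phi_p(-x)$ correctly yields $b=\mp1$.
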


\begin{proof}
	We derive the result for $n = p^2 + p + 1$, which requires us to show
	that $a_q = 2p$ and $c_q = 3p^2$.
	The analogous result can be derived for $n = p^2 - p + 1$ simply by a change of sign in the right places.
	With $q = p^2$, the Weil bounds in \Cref{eq:bound-a,eq:lower-bound-b,eq:upper-bound-b}
	become
	\[
		|a_q| \leq 4p, \quad
		c_q \leq 2p^2 + \frac{1}{4} a_q^2, \quad \text{and} \quad
		c_q \geq -2p^2 + 2p \cdot |a_q|.
	\]
	As $|A(\Fpt)| = n^2 = \phi_A(1)$, we get
	\begin{equation}\label{eq:p2-p-1}
		a_q(p^2 + 1) + c_q = 2p^3 + 3p^2 + 2p.
	\end{equation}
	We now derive the two bounds $a_q \geq 2p$ and $a_q \leq 2p$
	to conclude that $a_q = 2p$, which directly implies $c_q = 3p^2$.
	First, by $c_q \leq 2p^2 + \frac{1}{4} a_q^2$, and substituting $c_q$
	into \Cref{eq:p2-p-1}, we get
	\[
		\frac{1}{4} a_q^2 + a_q(p^2 + 1) - (2p^3 + p^2 + 2p) \geq 0,
	\]
	where equality with 0 is reached at $a_q = 2p$ and $a_q = -4p^2 - 2p - 4$.
	The bound $|a_q| \leq 4p$ excludes the second branch, so we get
	$2p \leq a_q \leq 4p$.
	Then, by $c_q \geq -2p^2 + 2p |a_q|$ and substituting $c_q$, noticing $|a_q| = a_q$ as $a_q \geq 2p$, we get
	\[
		2p a_q - 2p^2 \leq 2p^3 + 3p^2 + 2p - a_q(p^2 + 1),
	\]
	which can be rewritten to
	\[
		a_q \leq 2p + \frac{p^2}{(p+1)^2}.
	\]
	This last fraction is always smaller than 1, so as $a_q$ is an integer we have that $a_q \leq 2p$.
	We conclude that $a_q = 2p$ and $c_q = 3p^2$, i.e., $b_q = -3p^2$, as required.

	Now suppose that $A$ is the base change of an abelian surface over $\Fp$, with Weil polynomial $\phi_p$.
	We have that
	\begin{align}\label{eq:relation-aq-cq}
		a_q &= 2c_p - a_p^2, \quad c_q = c_p^2 - 2 a_p^2 p + 2p^2.
	\end{align}
	Substituting into $a_q = 2p$ and $c_q = 3p^2$, we get $a_p^2(a_p^2 - 4p) = 0$. As $p>5$, $4p$ is not a square, so $a_p = 0$ and $c_p = p$ and $A$ is of type $b = -1$.
\end{proof}

\paragraph{The case $\mathbf{b = 0}$.}
This case is most interesting, as for $n = p^2 + 1$
there exist non-supersingular abelian surfaces such that
$|A(\Fpt)| = n^2$, in contrast with the other cases. However, we show
that such abelian surfaces are never defined over $\Fp$, except for the supersingular ones.

\begin{lemma}\label{lemma:order-p2-1}
	Let $n = p^2 + 1$ and let $A$ be an abelian surface over $\Fp$ with $|A(\Fpt)| = n^2$.
	Then $A$ is supersingular, and in particular, the Weil polynomial
	of $A$ over $\Fp$ is $x^4 + p^2$.
\end{lemma}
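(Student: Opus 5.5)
The plan is to reduce everything to the two coefficients of the Weil polynomial over $\Fp$, using the factorisation of the point count over $\Fpt$, and then to rule out $a_p \neq 0$ with the Weil bounds and an integrality argument.

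\textbf{Reduction.} Write $a \colonequals a_p$ and $c \colonequals c_p = -b_p$, so that $\phi_p(x) = x^4 + a x^3 + c x^2 + a p x + p^2$. The Frobenius eigenvalues over $\Fpt$ are the squares $\alpha_i^2$ of those over $\Fp$, by property 4 of the Weil polynomial. Hence
\[
|A(\Fpt)| = \prod_i (1-\alpha_i^2) = \phi_p(1)\,\phi_p(-1).
\]
Put $u \colonequals p^2+1+c$. Then $\phi_p(\pm 1) = u \pm a(p+1)$, and the hypothesis $|A(\Fpt)| = (p^2+1)^2$ becomes
\[
u^2 - a^2(p+1)^2 = (p^2+1)^2, \qquad\text{equivalently}\qquad c\,(c + 2p^2 + 2) = a^2 (p+1)^2 .
\]
If $a = 0$, the right-hand side vanishes. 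Since $c \geq -2p$ by the Weil bound, the factor $c+2p^2+2$ is positive, so $c = 0$ and $\phi_p(x) = x^4 + p^2$. This is supersingular, of type $b=0$, by \Cref{lem:supersingular_orders}. It therefore remains to show that $a \neq 0$ is impossible.

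\textbf{Case $a \neq 0$.} The right-hand side is then positive, so $c > 0$. The Weil bounds over $\Fp$ give
\[
|a| \leq 4\sqrt p, \qquad c \leq 2p + \tfrac14 a^2 \leq 6p, \qquad c \geq 2\sqrt p\,|a| - 2p .
\]
Next I would use $(p+1)^2 = (p^2+1) + 2p$ to rewrite the equation as
\[
(p^2+1)(a^2 - 2c) = c^2 - 2p a^2 .
\]
The right-hand side is bounded in absolute value by roughly $68p^2$. Therefore $k \colonequals a^2 - 2c$ is an integer with $|k|$ bounded by an absolute constant, at most about $68$. Substituting $a^2 = 2c + k$ gives
\[
(c - 2p)^2 = (k+4)p^2 + 2kp + k .
\]
The Weil bound $c \leq 2p + a^2/4$ becomes $c \leq 4p + k/2$. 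Together with $c \geq 2\sqrt p|a| - 2p$ this pins $c$ down, roughly $c \approx 4p$ with $a^2 \approx 8p$, and in particular gives $k \geq -4$ up to lower order terms.

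\textbf{Main obstacle.} The hard step is showing that $(k+4)p^2 + 2kp + k$ cannot equal $(c-2p)^2$ for the allowed $k$ and $c$, except in the degenerate case. I would first try to show that for $p > 5$ and the bounded range of $k$, this quadratic in $p$ lies strictly between consecutive squares of the form $(mp + r)^2$, or that it forces $k+4$ to be a perfect square $m^2$. In the latter case, compare $(mp+r)^2 = m^2p^2 + 2mrp + r^2$ coefficient-wise with $(k+4)p^2 + 2kp + k$, using that $p$ is large relative to the bounded quantities. This should leave only $k = 0$, i.e.\ $a^2 = 2c$ and $(c-2p)^2 = 4p^2$, so $c = 4p$ and $a^2 = 8p$. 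That is impossible, since $8p$ is not a square for odd prime $p$. A mod-$p$ check, $c(c+2) \equiv a^2 \pmod p$, can serve as a backup to eliminate residual values of $k$.
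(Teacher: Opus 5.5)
Your reduction is correct and matches the paper's. The product $\phi_p(1)\phi_p(-1)$ gives exactly its relation $c(c+2p^2+2)=a^2(p+1)^2$. Your $k=a^2-2c$ is $-a_q$, so $(c-2p)^2=4p^2+k(p+1)^2$ is the paper's identity $a_q(p+1)^2=c(4p-c)$, and your $a=0$ case is fine.

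The gap is in the step you call the main obstacle. You cannot show that $(k+4)p^2+2kp+k$ is not a perfect square for $k\neq 0$, because this is false. For $k=-2$ and $p=11$ it equals $242-44-2=196=14^2$, giving $c\in\{8,36\}$. Both values satisfy every real-valued constraint you have: $0<c\le 4p+k/2=43$, and $c\ge 2\sqrt{p}\,|a|-2p$ with $a^2=2c+k$. The same happens at $p=59$, where the expression equals $82^2$. When $k+4$ is not a square the equation is of Pell type: for $k=-2$ it is $t^2-2s^2=-1$ with $p=2s+1$, and for $k=-1$ it is $(3p-1)^2-3m^2=4$. Such equations have infinitely many integer solutions, so no ``between consecutive squares'' argument applies, and nothing forces $k+4=m^2$.

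Your mod-$p$ backup adds nothing. With $a^2=2c+k$, the congruence $c(c+2)\equiv a^2$ reduces to $c^2\equiv k \pmod p$, which already follows from your equation. The residual quadratic-residue condition is also met in the example: $70\equiv 2^2$ and $14\equiv 5^2 \pmod{11}$. Likewise, the real constraints do not pin $c$ near $4p$; $c=8$ above is admissible.

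The missing ingredient is that $2c+k=a^2$ must itself be a square, which the paper exploits $2$-adically. First restrict $k$:
\begin{itemize}
\item Since $(c-2p)^2\ge 0$, you get $k(p+1)^2\ge -4p^2$, hence $k\ge -3$.
\item If $k>0$, then $(c-2p)^2>4p^2$ together with $c>0$ forces $c>4p$. Squaring $c-2p\le 2p+k/2$ then gives $k(p^2+1)\le k^2/4$, i.e.\ $k\ge 4(p^2+1)$, contradicting $|k|<68$.
\end{itemize}
So $k\in\{-3,-2,-1,0\}$, which is the paper's $a_q\in\{0,1,2,3\}$. Now treat each case:
\begin{itemize}
\item For $k=-1$: $(c-2p)^2=4p^2-(p+1)^2\equiv 0 \pmod 4$ makes $c$ even, so $a^2=2c-1\equiv 3\pmod 4$, impossible.
\item For $k=-2$: $(c-2p)^2=4p^2-2(p+1)^2\equiv 4\pmod 8$ forces $4\mid c$, so $a^2=2c-2\equiv 6\pmod 8$, impossible.
\item For $k=-3$: $(c-2p)^2=(p-3)^2-12$ is a square only when $p=7$, and there $a^2=2c-3\in\{21,29\}$, not a square.
\end{itemize}
Only $k=0$ survives, which you already eliminate via $a^2=8p$.
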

\begin{proof}
	We denote the Weil polynomial for $A$ over $\Fp$
	by $\phi_p(x)$
	and over $\Fpt$ by $\phi_q(x)$. The goal is to use  \Cref{lem:supersingular_orders} and show that $a_p =0$ and $p \mid c_p$.

	As $|A(\Fpt)| = n^2$ we get from $\phi_q(1) = n^2$ that
	\begin{equation}\label{eq:A}
		a_q(p^2 + 1) + c_q = 2p^2.
	\end{equation}

	By $\phi_p(x) = \prod (x - \alpha_i)$ and $\phi_q(x) = \prod (x - \alpha_i^2)$,
	we have that $\varphi_p(x)\varphi_p(-x) = \varphi_q(x^2)$.
	Expanding this out and comparing coefficients, we
	find that $a_q = 2c_p - a_p^2$ and $c_q = c_p^2 - 2 a_p^2 p + 2p^2$.
	This allows us to rewrite \Cref{eq:A} as
	\begin{equation}\label{eq:B}
		a_p^2 = \frac{c_p^2 + 2c_p(p^2 + 1)}{(p+1)^2},
	\end{equation}
	and so from $a_q = 2c_p - a_p^2$, by expressing everything in terms of $c_p$,

	we get
	\begin{equation}\label{eq:C}
		a_q = \frac{c_p(4p - c_p)}{(p+1)^2}.
	\end{equation}
	First we bound $c_p$. From the Weil bound in \Cref{eq:upper-bound-b} we have
	$c_p \leq 2p + \frac{1}{4} a_p^2$.
	Together with \Cref{eq:B}, we obtain
	\[ c^2_p - 2c_p(p^2+4p+1) + 8p(p+1)^2 \geq 0.\]
	Solving the quadratic in $c_p$, we get that $c_p \leq 4p$ or $c_p \geq 2(p^2 + 2p + 1)$. The latter is not possible due to the Weil bounds, so we find that
	$c_p \leq 4p$.

	Furthermore, the right-hand side of \Cref{eq:B} should be positive,
	which implies that $c_p^2+2c_p(p^2+1) \geq 0$.
	Writing as $c_p(c_p+2p^2+2) \geq 0$, and noting that
	$c_p + 2p^2 + 2 \geq 0$ by \Cref{eq:lower-bound-b}, we have that $c_p \geq 0$.
	Thus, we know $0 \leq c_p \leq 4p$.
	Using these values for $c_p$ in \Cref{eq:C}, and noting that $a_q$ is an integer, we have $a_q \in \{0, 1, 2, 3\}$. We now show that only $a_q = 0$ is possible.
	\begin{itemize}
		\item[$a_q = 1$:] From $(p+1)^2 = c_p (4p - c_p)$, we get $2 \mid c_p$.
		But then $1 = a_q = 2c_p - a_p^2$ cannot have a solution, as this requires $a_p$ odd
		and $a_p^2 \equiv 3 \bmod 4$, which is impossible.
		\item[$a_q = 2$:] From $2(p+1)^2 = c_p (4p - c_p)$, we get $4 \mid c_p$.
		But then $2 = a_q = 2c_p - a_p^2$ cannot have a solution mod $8$.
		\item[$a_q = 3$:] To get an integer solution to the quadratic in $c_p$ from \Cref{eq:C}, the discriminant needs to be a square. This discriminant is
		equal to $(p-3)^2 - 12$ and hence only a square for $p = 7$.
		Thus, for $p > 7$, this case is never possible. For $p = 7$, the solutions
		$c_p = 12, 16$ force a non-integer solution for $a_p$ in $3 = a_q = 2c_p - a_p^2$.
	\end{itemize}
	We conclude that $a_q = 0$ and hence $c_q = 2p^2$ by \Cref{eq:A}.
	This is enough to show supersingularity of $A$, but we can even deduce from \Cref{eq:C} that
	$c_p = 0$, so that $a_p = 0$ (from $a_q = 2c_p - a_p^2$), as the case $c_p = 4p$
	forces $a_p^2 = 8p$, which can only happen for $p = 2$.
\end{proof}
We summarize the results of this section in the following theorem.
\begin{theorem}\label{thm:supersingular-orders}
	Let $n = p^2 - bp + 1$ with $b \in \{-2, -1, 0, 1, 2\}$.
	\begin{itemize}
		\item Assuming $b \neq 0$, if $A$ is an abelian surface over $\Fp$ or $\Fpt$
		such that $|A(\Fpt)| = n^2$,
		then $A$ is supersingular.
		\item If $b = 0$ and $A$ is an abelian surface over $\Fp$ such that $|A(\Fpt)| = n^2$,
		then $A$ is supersingular.
	\end{itemize}
\end{theorem}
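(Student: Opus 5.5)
The theorem is a summary of the three case analyses above, so I will prove it by splitting on the value of $b$ and invoking the corresponding lemma in each case. In every case I only use the hypothesis $|A(\Fpt)| = n^2$. If $A$ is defined over $\Fp$, I pass to its base change to $\Fpt$; this has the same group $A(\Fpt)$, and supersingularity is a geometric property, so nothing is lost.

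\emph{Case $b = \pm 2$.} Here $n^2 = (p\mp 1)^4 = (\sqrt{q}\mp 1)^{4}$ with $q = p^2$. This is exactly the lower, respectively upper, endpoint of the Hasse--Weil interval \eqref{eq: hasse weil interval} over $\Fpt$ with $g = 2$. Hence the base change of $A$ to $\Fpt$ is minimal, respectively maximal, i.e.\ extremal. By \Cref{lem:poonen en co} it is then superspecial, and in particular supersingular. I will state briefly that superspeciality over $\Fpt$ is a property of $A$ over $\Fpbar$, which gives the claim for $A$ over $\Fp$ as well.

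\emph{Case $b = \pm 1$.} Here $n = p^2 \mp p + 1$, and \Cref{lem:order-p2-p-1} applies directly to any abelian surface with $|A(\Fpt)| = n^2$. It shows that the Weil polynomial over $\Fpt$ is $(x^2 \mp p x + p^2)^2$. That lemma is written for $n = p^2 \pm p + 1$, so I will check that the sign convention matches: $b = 1$ corresponds to $n = p^2 - p + 1$. The roots of this polynomial have the form $p\zeta$ with $\zeta$ a root of unity, so $A$ is supersingular, as the lemma already records. This covers $A$ over $\Fpt$ and also $A$ over $\Fp$ via base change.

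\emph{Case $b = 0$.} Here the hypothesis that $A$ is defined over $\Fp$ is essential, and \Cref{lemma:order-p2-1} gives exactly that $A$ is supersingular, with Weil polynomial $x^4 + p^2$ over $\Fp$. Having distributed the hypotheses this way, the theorem follows.

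I expect no real obstacle, since the substantive work is in the lemmas. The only point needing care is the $b = \pm 2$ case: I must check that $|A(\Fpt)| = (p \pm 1)^4$ really coincides with the Hasse--Weil endpoint, and that the conclusion of \Cref{lem:poonen en co} over $\Fpt$ transfers back to an $A$ defined over $\Fp$.
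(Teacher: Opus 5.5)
Your proposal is correct and follows the paper's argument: the theorem is assembled from the case $b=\pm 2$ via \Cref{lem:poonen en co} (the order $(p\mp1)^4$ is the Hasse--Weil endpoint over $\Fpt$), the case $b=\pm1$ via \Cref{lem:order-p2-p-1}, and the case $b=0$ via \Cref{lemma:order-p2-1}. The remaining bookkeeping is the sign convention and the transfer from the base change over $\Fpt$ back to $A/\Fp$, which is valid because supersingularity is geometric; you handle both correctly, whereas the paper leaves them implicit.
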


\subsection{Structure theory of $m$-torsion}\label{sec:pairings-jacobians}
Our conclusive supersingularity testing in \Cref{sec:conclusive} uses the $m$-Weil pairing to identify the structure of the rational
$m$-torsion, where $m$ is the exponent of $A(\Fp)$. The goal of this section is to show that knowing the structure of $A[m]$ is
sufficient to determine the supersingularity of $A$. To this end, we develop theory on the $m$-torsion of supersingular Jacobians,
and derive the necessary results on the Weil pairing to enable this testing.

To determine the full $m$-torsion degree of $A[m]$, we use the following lemma.
\begin{lemma}\label{lem:phi splits mod n}
    Let $\phi(x) = x^4 - bpx^2 + p^2$ and let $m$ be any divisor of $p^2 - bp + 1$. Then
    	\[
        \phi(x) \equiv (x^2 - 1) \cdot (x^2 - p^2) \quad \bmod m.             \eqno
    \]
	\end{lemma}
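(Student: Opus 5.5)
The plan is a direct polynomial comparison: I will expand the right-hand side and reduce its difference with $\phi(x)$ using only the congruence $p^2 - bp + 1 \equiv 0 \bmod m$. Expanding gives
\[
	(x^2 - 1)(x^2 - p^2) = x^4 - (p^2 + 1)x^2 + p^2,
\]
so the constant and quartic coefficients already agree with those of $\phi(x) = x^4 - bpx^2 + p^2$. Hence
\[
	\phi(x) - (x^2-1)(x^2-p^2) = (p^2 + 1 - bp)\,x^2 .
\]
The only thing left to check is that the $x^2$-coefficients agree modulo $m$.

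Since $m$ divides $p^2 - bp + 1$, the coefficient $(p^2 - bp + 1)$ of $x^2$ vanishes modulo $m$. We get $bp \equiv p^2 + 1 \bmod m$, and therefore the difference is $\equiv 0 \bmod m$ as a polynomial in $(\ZZ/m\ZZ)[x]$. This gives the claimed congruence
\[
	\phi(x) \equiv (x^2-1)(x^2-p^2) \bmod m.
\]

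There is essentially no obstacle. The one point worth stating explicitly is the meaning of the congruence: it is coefficientwise in $(\ZZ/m\ZZ)[x]$, and it holds for every $b \in \{-2,-1,0,1,2\}$ uniformly, since $b$ never enters beyond the hypothesis $m \mid p^2 - bp + 1$. As a remark useful for the later application to the full $m$-torsion degree, I would note that the roots of $\phi$ modulo $m$ are therefore $\pm 1$ and $\pm p$. This means Frobenius acts on $A[m]$ with eigenvalues among $\{\pm 1, \pm p\}$ (interpreted modulo $m$), which is presumably how the lemma will be used.
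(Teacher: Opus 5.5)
Your proof is correct and is the same direct coefficient comparison the paper intends; the paper states the lemma without proof because it follows immediately from $(x^2-1)(x^2-p^2) = x^4-(p^2+1)x^2+p^2$ together with $m \mid p^2-bp+1$. Your closing remark about Frobenius eigenvalues is fine as motivation, but note that when $\pm 1, \pm p$ are not pairwise distinct modulo some prime $\ell \mid m$, the action on $A[m]$ need not diagonalize, which is why the paper handles those primes separately in \Cref{lem:decomposition of Jn}.
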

\begin{remark}\label{rem:torsion-in-Fpt}
	The minimal polynomial of the Frobenius endomorphism $\pi$ is squarefree (as $\pi$ is central in $\End^0(A)$, which is semisimple) and divides $\phi$. In particular, for $A$ of type $b = \pm 2$ we have $\pi^2 = [\pm p]$ on $A$. Hence, $\pi^2$ acts trivially on $A[p\mp 1]$, so $A[p\mp 1] \subseteq A(\Fpt)$.
\end{remark}

As $\phi(x)$ splits completely modulo $m$, if the roots are distinct modulo $\ell$ for all primes $\ell \mid m$, then
the action of Frobenius on $A[m]$ is diagonalizable,
and its order
will be the lowest common multiple of the orders of the four eigenvalues $\pm 1, \pm p \in (\Z/m\Z)^\times$. Hence, we can derive the
full $m$-torsion degree from the embedding degree of $m$, and decompose $A[m]$ accordingly.
Generically, we get $X^2 + X + 1 \mid X^3 - 1$, $X^2 + 1 \mid X^4 - 1$
and $X^2 - X + 1 \mid X^6 - 1$, which gives embedding degrees 3, 4, and 6 for
$m$-th root of unities where $m$ is the exponent for $b$-type $-1$, $0$, and $1$ respectively.
With a slight abuse of notation, for $\lambda \in \{\pm 1, \pm p\}$ we write $A[\pi - \lambda] = \ker(\pi - \lambda) \cap A[m]$
for the corresponding eigenspace. These are defined over the following fields.

\begin{lemma}\label{lem:eigenspace fields}
  Let $A$ over $\Fp$ be a supersingular abelian surface of type $b$ and exponent $m$.
  If $\lambda^k \equiv 1 \bmod m$,
  then $A[\pi - \lambda] \subseteq A(\mathbb{F}_{p^k})$, and if $\lambda^k \equiv -1 \bmod m$, then
  $A[\pi - \lambda] \subseteq A^t(\mathbb{F}_{p^k})^\rho$.
	Furthermore:
  \begin{enumerate}
	\item The eigenspaces of $\lambda = \pm 1$ are
	$A[\pi - 1] = A(\Fp)$ and $A[\pi + 1] = A^t(\Fp)^\rho$.
	\item The eigenspaces of $\pm p$ are, per type $b$:
	\begin{enumerate}
		\item $b = \phantom{-}2$: $A[\pi - p] = A[\pi - 1] = A(\Fp)$ and $A[\pi + p] = A[\pi + 1] = A^t(\Fp)^\rho$.
		\item $b = -2$: $A[\pi - p] = A[\pi + 1] = A^t(\Fp)^\rho$ and $A[\pi + p] = A[\pi - 1] = A(\Fp)$.
		\item $b = \phantom{-}0$: $A[\pi - p] = A^t(\Fpt)^{\rho}$ and $A[\pi + p] = A^t(\Fpt)^{\rho'}$.
		\item $b = -1$: $A[\pi - p] = A(\Fpthree)$ and $A[\pi + p] = A^t(\Fpthree)^{\rho'}$.
		\item $b = \phantom{-}1$: $A[\pi - p] = A^t(\Fpthree)^{\rho}$ and $A[\pi + p] = A(\Fpthree)$.
	\end{enumerate}
\end{enumerate}
\end{lemma}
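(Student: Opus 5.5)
The plan is to reduce everything to Frobenius-power identities on the eigenspaces, using \Cref{lem:phi splits mod n} and the convention that $\pi$ is the $p$-Frobenius of $A$. The basic observation is this. For $P \in A[\pi - \lambda]$ we have $\pi^k(P) = [\lambda^k]P$, and since $P \in A[m]$ we may reduce $\lambda^k$ modulo $m$.

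The two general claims then follow directly.
\begin{enumerate}
\item If $\lambda^k \equiv 1 \bmod m$, then $\pi^k P = P$, so $P \in A(\mathbb{F}_{p^k})$.
\item If $\lambda^k \equiv -1 \bmod m$, then $\pi^k P = -P$. Let $\rho: A^t \to A$ be the twisting isomorphism over $\mathbb{F}_{p^{2k}}$ for the quadratic twist over $\mathbb{F}_{p^k}$ (induced by $[-1]$), so that $\rho^{-1}\pi^k\rho = -\pi_{A^t}^k$. Then $Q = \rho^{-1}(P)$ satisfies $\pi_{A^t}^k(Q) = -\rho^{-1}(\pi^k P) = \rho^{-1}(P) = Q$. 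Hence $P \in A^t(\mathbb{F}_{p^k})^\rho$.
\end{enumerate}
Applying this with $k = 1$ and $\lambda = \pm 1$ gives one inclusion in item 1. For the reverse inclusion, take $P \in A(\Fp)$. Then $\pi P = P$, and $P \in A[m]$ because $m$ is the exponent of $A(\Fp)$, so $A(\Fp) = A[\pi-1]$. For the twist, $A^t(\Fp)$ has exponent $m$ as well by \Cref{rem:twist-iso}, and the same argument gives $A^t(\Fp)^\rho = A[\pi+1]$.

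For item 2, I compute $p$ modulo $m$ in each type, using $m \mid p^2 - bp + 1$.
\begin{itemize}
\item[$b=2$:] Here $m = p-1$ by \Cref{thm: possible group structures of rational jacobians}. So $p \equiv 1$, and the $\pm p$ eigenspaces coincide with the $\pm 1$ eigenspaces.
\item[$b=-2$:] Here $m = p+1$, so $p \equiv -1$, and the eigenspaces swap.
\item[$b=0$:] Here $p^2 \equiv -1 \bmod m$. Apply the general claim with $\lambda = p$, $k = 2$, which gives a subspace of $A^t(\Fpt)^\rho$. For $\lambda = -p$ we also have $\lambda^2 \equiv -1$, and we use the twist $\rho'$ (possibly a different twisting map, e.g.\ composed with an automorphism).
\item[$b=-1$:] Here $p^3 \equiv 1$, since $p^3 - 1 = (p-1)(p^2+p+1)$. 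So $A[\pi - p] \subseteq A(\Fpthree)$. Also $(-p)^3 \equiv -1$, giving $A[\pi+p] \subseteq A^t(\Fpthree)^{\rho'}$.
\item[$b=1$:] Here $p^3 \equiv -1$, since $p^3 + 1 = (p+1)(p^2-p+1)$. So $A[\pi-p] \subseteq A^t(\Fpthree)^\rho$ and $A[\pi+p] \subseteq A(\Fpthree)$.
\end{itemize}

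The main obstacle is the equalities in item 2, parts (c)--(e). The inclusions above only show that the eigenspace lies inside the $m$-torsion of the relevant group. The statement should be read as identifying the eigenspace with $A(\mathbb{F}_{p^k})[m]$ (resp.\ the twisted group's $m$-torsion), and I would prove the reverse inclusion as follows.

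Let $P$ be an $m$-torsion point with $\pi^k P = P$. Decompose $P$ along the eigenspace decomposition of $A[m]$, which is diagonalizable when the roots $\pm1, \pm p$ are distinct modulo each $\ell \mid m$. Then $\pi^k$ acts on the component for $\mu$ by $\mu^k$, and the component survives only if $\mu^k \equiv 1$. In the cyclic cases the roots $\pm 1$ have $\mu^k = \pm 1$ with $k = 3$ or $2$. I would check that $1 - \mu^k$ is invertible modulo each $\ell \mid m$ for each of the other three roots $\mu$. This uses $\gcd(m, 2) = 1$ and $\gcd(m, p^{j} \pm 1) = 1$ for the appropriate $j$, which holds because, for example, $\gcd(p^2+p+1, p-1) \mid 3$.

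The small primes $\ell = 2, 3$, where roots may collide, need separate care. For these I would either restrict to $\ell \nmid 6$ or handle them using the group structures in \Cref{thm: possible group structures of rational jacobians}.
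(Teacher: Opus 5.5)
Your first half matches the paper's proof exactly: you use $\pi^kP=[\lambda^k]P$ on $A[\pi-\lambda]$ together with the congruences $p\equiv\pm1$, $p^2\equiv-1$ and $p^3\equiv\mp1 \pmod m$. This correctly gives item~1, items 2(a)--(b) as genuine equalities, and the inclusions $\subseteq$ in 2(c)--(e). For $b=\pm2$ the exponent may also be $m=\tfrac{p\mp1}{2}$, which is harmless since still $m\mid p\mp1$.

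The step that fails is your reverse inclusion. You propose to check that $\lambda^k-\mu^k$ is a unit modulo every $\ell\mid m$ for the three other roots $\mu$. This is false for exactly one $\mu$ in every case:
\begin{itemize}
\item for $b=0$, $k=2$, both $p$ and $-p$ square to $-1$;
\item for $b=-1$, $k=3$, $p^3\equiv 1=1^3$ and $(-p)^3\equiv-1=(-1)^3$;
\item for $b=1$, $k=3$, $p^3\equiv-1=(-1)^3$ and $(-p)^3\equiv1=1^3$.
\end{itemize}
This is not a small-prime issue. For $b=-1$, $A(\Fp)=A[\pi-1]$ lies in $A(\Fpthree)[m]$. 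But $A(\Fp)\cap A[\pi-p]\subseteq A[\gcd(p-1,m)]\subseteq A[3]$, while $|A(\Fp)|=m>3$, so $A(\Fpthree)[m]\neq A[\pi-p]$. For $b=0$, the relation $\pi^4=-p^2$ gives $A^t(\Fpt)^\rho=\ker(\pi^2+1)\subseteq A[m]$. This group has order $|A^t(\Fpt)|=(p^2+1)^2=m^2$ and contains both $A[\pi-p]$ and $A[\pi+p]$, each of order $m$.

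So 2(c)--(e) are false as equalities, even after restricting to $m$-torsion, and no extra care at $\ell=2,3$ can rescue that part of your argument. They hold only as inclusions. That is all the paper's proof establishes, and all that is used later: \Cref{lem:b0,lem:b1,lem:bm1} state these eigenspaces with $\subseteq$. Drop the reverse-inclusion paragraph and read 2(c)--(e) as inclusions; with that change your proof coincides with the paper's.
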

\begin{proof}
  For $P \in A[\pi - \lambda]$ we have $\pi^k P = [\lambda^k] P$. If $\lambda^k \equiv 1 \bmod m$ this
  equals $P$, so $P$ is fixed by the $p^k$-power Frobenius and hence $P \in A(\mathbb{F}_{p^k})$.
  If $\lambda^k \equiv -1 \bmod m$, we have $\pi^k P = -P$, so $P \in A^t(\mathbb{F}_{p^k})$.

 The congruences follow from $m \mid p^2 - bp + 1$. For $b = \pm 2$ we have
  $m \in \{ p \mp 1, \tfrac{p \mp 1}{2}\}$ by \Cref{thm: possible group structures of rational jacobians},
  so $p \equiv \pm 1 \bmod m$. For $b = 0$ we have $m = p^2 + 1$, so $p^2 \equiv -1 \bmod m$. Finally, for
  $b = \pm 1$ we have $m = p^2 \mp p + 1$ dividing $p^3 \pm 1$, hence $p^3 \equiv \mp 1 \bmod m$.
  The statements for $A[\pi + p]$ follow from $(-p)^k = (-1)^k p^k$.
\end{proof}
The subgroup generated by these four eigenspaces is almost all of $A[m]$; the index is a power of $2$ or
of $3$, and is determined by the type $b$ and the group structure of $A(\Fp)$.

\begin{lemma}\label{lem:decomposition of Jn}
  Let $A$ be a supersingular abelian surface defined over $\Fp$, of type $b$ and exponent $m$. Let
    \[
    H = A[\pi - 1] + A[\pi + 1] +  A[\pi - p] +  A[\pi + p] \subseteq A[m]
  \]
    be the subgroup generated by the four eigenspaces of \Cref{lem:eigenspace fields}. Then the index of
  $H$ in $A[m] \cong (\ZZ/m\ZZ)^4$ is as follows:
  \begin{itemize}
    \item If $b = \pm 2$, then $H = A(\Fp) + A^t(\Fp)^\rho$ and
      \[
        [A[m] : H] \;=\; \frac{m^4 \cdot |A[2](\Fp)|}{|A(\Fp)|^2} \;=\;
        \begin{cases}
          4 & \text{if } A(\Fp) \cong \left( \ZZ/m\ZZ \right)^2, \\
          8 & \text{if } A(\Fp) \cong \ZZ/m\ZZ \times \ZZ/\tfrac{m}{2}\ZZ \times \ZZ/2\ZZ, \\
          1 & \text{if } A(\Fp) \cong \left( \ZZ/m\ZZ \times \ZZ/2\ZZ \right)^2,
        \end{cases}
      \]
      where the first two cases have $m = p \mp 1$ and the third has $m = \tfrac{p \mp 1}{2}$.
    \item If $b = \phantom{-}0$, then $[A[m] : H] = 8$.
    \item If $b = -1$, then $[A[m] : H] = 9$ if $p \equiv 1 \bmod 3$, and $[A[m] : H] = 1$ otherwise.
    \item If $b = \phantom{-}1$, then $[A[m] : H] = 9$ if $p \equiv 2 \bmod 3$, and $[A[m] : H] = 1$ otherwise.
  \end{itemize}
\end{lemma}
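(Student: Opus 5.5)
The plan is to compute $|H|$ directly and compare it with $|A[m]| = m^4$. The tools are the eigenspace description of \Cref{lem:eigenspace fields}, the group structures of \Cref{thm: possible group structures of rational jacobians}, and \Cref{rem:twist-iso}, which gives $A(\Fp)\cong A^t(\Fp)$.

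\textbf{Case $b=\pm 2$.} By \Cref{lem:eigenspace fields} the four eigenspaces collapse to two, so $H = A(\Fp) + A^t(\Fp)^\rho$. Both summands are killed by $m$, since the two groups are isomorphic with exponent $m$, so $H\subseteq A[m]$. A point in the intersection satisfies $\pi P = P$ and $\pi P = -P$, hence $2P=\zero$ and $P$ is rational. Conversely every rational $2$-torsion point lies in both summands. So the intersection is exactly $A[2](\Fp)$, and
\[
|H| = \frac{|A(\Fp)|\cdot|A^t(\Fp)|}{|A[2](\Fp)|} = \frac{|A(\Fp)|^2}{|A[2](\Fp)|}.
\]
This gives the displayed general formula. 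The three cases then follow by reading off the $2$-rank and the order from the listed group structures:
\begin{itemize}
\item rank $2$ and order $m^2$ gives index $4$;
\item rank $3$ and order $m^2$ gives index $8$;
\item rank $4$ and order $4m^2$ gives index $1$. Here $m=\tfrac{p\mp1}{2}$ is even whenever this exponent is genuinely $m$.
\end{itemize}

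\textbf{Cases $b\in\{-1,0,1\}$.} Here I work one prime at a time, decomposing $A[m]=\bigoplus_{\ell\mid m} A[\ell^{e_\ell}]$; the index is the product of the local indices. On $A[\ell^{e}]$, Frobenius is annihilated by $(x-1)(x+1)(x-p)(x+p)$ modulo $\ell^e$, by \Cref{lem:phi splits mod n}. If the four roots $\pm1,\pm p$ are pairwise distinct modulo $\ell$, the factors are pairwise coprime over $\ZZ/\ell^e\ZZ$. The usual CRT/idempotent argument then makes $A[\ell^e]$ the direct sum of the four eigenspaces, so the local index is $1$.

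Collisions modulo $\ell$ require $\ell$ to divide one of $2$, $p-1$ or $p+1$, and also $m=p^2-bp+1$. I expect this bookkeeping to be the main obstacle, though each case is short.
\begin{itemize}
\item For $b=0$: $\gcd(p\pm1, p^2+1)\mid 2$, and $p^2+1\equiv 2 \bmod 4$. So only $\ell=2$ occurs, with $A[2]\cong(\ZZ/2\ZZ)^4$. Modulo $2$, every operator $\pi-\lambda$ equals $\pi-1$, so the $2$-part of $H$ is $A[2](\Fp)$. Since $A(\Fp)$ is cyclic of even order, this is $\ZZ/2\ZZ$, giving index $16/2=8$.
\item For $b=-1$: $m$ is odd, and $\gcd(p\pm1,p^2+p+1)\mid 3$. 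We have $3\mid m$ iff $p\equiv1\bmod3$, and then $p^2+p+1\equiv 3\bmod 9$, so $3\,\|\,m$. On $A[3]$ we have $p\equiv1$, so $A[\pi-p]=A[\pi-1]$ and $A[\pi+p]=A[\pi+1]$ there. Thus the $3$-part of $H$ is $A(\Fp)[3]+A^t(\Fp)^\rho[3]$. This sum is direct, because $\pm1$ are distinct modulo $3$. Each summand is $\ZZ/3\ZZ$, as the groups are cyclic, so the $3$-part of $H$ has order $9$ and index $81/9=9$.
\item If $p\not\equiv 1\bmod 3$, no collision occurs and the index is $1$.
\item For $b=1$ the same argument works with $p\equiv 2\bmod 3$ in place of $p\equiv1\bmod3$, because $-p\equiv 1 \bmod 3$ then swaps the roles of the eigenspaces.

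One subtlety to check: when $p\equiv 1\bmod 3$ and $b=1$, or $p\equiv 2\bmod 3$ and $b=-1$, we have $3\nmid m$.
\end{itemize}
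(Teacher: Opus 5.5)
Your proposal is correct and follows essentially the same route as the paper. For $b=\pm2$ you count $|H| = |A(\Fp)|^2/|A[2](\Fp)|$ using $A(\Fp)\cap A^t(\Fp)^\rho = A[2](\Fp)$, and for $b\in\{-1,0,1\}$ you take local indices prime by prime, using the CRT splitting where $\pm1,\pm p$ are distinct modulo $\ell$ and handling the collisions at $\ell=2$ ($b=0$) and $\ell=3$ ($b=\pm1$) by hand. Your explicit check that $m = p^2 - bp + 1 \equiv 3 \bmod 9$ whenever $3 \mid m$, so that the local component is exactly $A[3]$, fills in a step the paper leaves implicit.
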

\begin{proof}
  Since $A[m] = \bigoplus_{\ell^e \mid m} A[\ell^e]$ and every eigenspace decomposes accordingly, the index
  of $H$ is the product of the local indices $[A[\ell^e] : H \cap A[\ell^e]]$ over the prime powers
  $\ell^e \mid m$. For a given prime factor $\ell$, suppose that $1, -1, p, -p$ are pairwise distinct modulo $\ell$.
  By \Cref{lem:phi splits mod n} we have
  $\phi(x) \equiv \prod_\lambda (x - \lambda) \bmod \ell^e$, and these factors are pairwise coprime
  in $(\ZZ/\ell^e\ZZ)[x]$ because the differences $\lambda - \lambda'$ are units.
  Thus, $A[\ell^e] = \bigoplus_\lambda \ker(\pi - \lambda)$, where each summand is
  free of rank 1 over $\ZZ/\ell^e\ZZ$. This implies that the local index at $\ell$ is 1.
  The only cases where this fails, i.e., $1, -1, p, -p$ are \emph{not} pairwise distinct modulo $\ell$, are:
  $b = 0$ and $\ell = 2$; $b = \pm 1$ and $\ell = 3$; or $b = \pm 2$, where the eigenvalues $\pm 1$ and $\pm p$
  agree (in some order) modulo every $\ell \mid m$.

  Consider first the case $b = \pm 2$.
  By \Cref{lem:eigenspace fields} we have $p \equiv \pm 1 \bmod m$, so $A[\pi - p]$ and $A[\pi + p]$
  coincide, in some order, with $A[\pi - 1] = A(\Fp)$ and $A[\pi + 1] = A^t(\Fp)^\rho$; thus
  $H = A(\Fp) + A^t(\Fp)^\rho$. As $A(\Fp) \cap A^t(\Fp)^\rho = A[2](\Fp)$, we have
  \[
	|H| = \frac{|A(\Fp)| \cdot |A^t(\Fp)|}{|A[2](\Fp)|} = \frac{|A(\Fp)|^2}{|A[2](\Fp)|}.
  \]
  Substituting the three
  group structures of \Cref{thm: possible group structures of rational jacobians} yields the three indices.

  Consider the case $b = 0$, where $m = p^2 + 1$ and $A(\Fp)$ is cyclic by
  \Cref{thm: possible group structures of rational jacobians}. As $p^2 \equiv 1 \bmod 8$
	we get $m \equiv 2 \bmod 8$, so $2 \mid m$ and the $2$-Sylow subgroup of $A(\Fp)$ is $\ZZ/2\ZZ$. All
  four eigenvalues are congruent modulo $2$ to $1$, so each eigenspace has its $2$-part contained in
  $\ker(\pi - 1) \cap A[2] = A[2](\Fp) \cong \ZZ/2\ZZ$. So, $H \cap A[2] \cong \ZZ/2\ZZ$ with local
  index at $2$ being $2^4 / 2 = 8$; other local indices are 1.

  In the case $b = \pm 1$, we have that $m = p^2 \mp p + 1$ is odd, and $3 \mid m$ if and only if $p \equiv \mp 1 \bmod 3$.
  If $3 \nmid m$, the index is 1. If $3 \mid m$, then
  $\phi(x) \equiv (x-1)^2 (x+1)^2 \bmod 3$. Since
  $A(\Fp)$ is cyclic, we get $\ker(\pi - 1) \cap A[3] = A[3](\Fp) \cong \ZZ/3\ZZ$, and
  likewise $\ker(\pi + 1) \cap A[3] \cong A^t[3](\Fp) \cong \ZZ/3\ZZ$.
	Therefore, $H \cap A[3]$ has order $9$ and local index at $3$ is $3^4 / 3^2 = 9$.
\end{proof}

Such a basis of eigenpoints as a subgroup of $A[m]$ is almost symplectic
as the following result on their pairing values shows.

\begin{lemma}\label{lem:almost symplectic basis}
  Let $A$ over $\Fp$ be a supersingular abelian surface of exponent $m$,
  and  let $P \in A[\pi -1], Q \in A[\pi + 1], R \in A[\pi - p], S \in A[\pi + p]$ be eigenpoints in $A[m]$.
  Then $e_m(P, S)$ and $e_m(Q, R)$ are of order at most $\gcd(m, 2)$, and
  $e_m(P, Q)$ and $e_m(R, S)$ are of order at most $\gcd(m, p+1)$.
\end{lemma}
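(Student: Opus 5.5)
The key tool will be the Galois equivariance of the Weil pairing, $e_m(\pi X, \pi Y) = e_m(X,Y)^p$, which holds because $\pi$ is the $p$-Frobenius and $A$ is defined over $\Fp$. For eigenpoints $X \in A[\pi - \lambda]$ and $Y \in A[\pi - \mu]$ this gives
\[
e_m(X,Y)^{\lambda\mu} = e_m(\lambda X, \mu Y) = e_m(\pi X, \pi Y) = e_m(X,Y)^p,
\]
so the order of $\zeta = e_m(X,Y)$ divides $\gcd(m, \lambda\mu - p)$, where $\lambda,\mu$ are read as integer representatives modulo $m$. The plan is to apply this identity to each of the four pairs and compute the resulting gcd.

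For the pair $(P,S)$ we have $\lambda\mu = 1\cdot(-p) = -p$, so the order of $e_m(P,S)$ divides $\gcd(m, -2p) = \gcd(m, 2)$, since $p \nmid m$. For the pair $(Q,R)$ we have $\lambda\mu = (-1)\cdot p = -p$, which gives the same bound. For the pair $(P,Q)$ we have $\lambda\mu = -1$, so the order divides $\gcd(m, p+1)$. For the pair $(R,S)$ we have $\lambda\mu = -p^2$, so the order divides $\gcd(m, p^2+p)$. Since $p$ is coprime to $m$, this equals $\gcd(m, p+1)$. The one thing to check is that multiplying through by the integer representative is legitimate. It is, because $X$ and $Y$ are $m$-torsion, so $\pi X = [\lambda]X$ depends only on $\lambda \bmod m$, and bilinearity gives $e_m([\lambda]X, [\mu]Y) = e_m(X,Y)^{\lambda\mu}$.

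I do not expect a serious obstacle. The only delicate points are recalling the correct equivariance convention, namely that Frobenius acts on $\mu_m$ by $\zeta \mapsto \zeta^p$, and noting that $\gcd(p,m)=1$, which holds because $m$ divides $p^2 - bp + 1 \equiv 1 \bmod p$. I would also remark that no case distinction on $b$ is needed: the argument uses only that these are eigenpoints with the stated eigenvalues modulo $m$, as set up in the discussion before \Cref{lem:eigenspace fields}.
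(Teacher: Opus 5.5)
Your proposal is correct and is essentially the paper's proof. Both combine Frobenius-equivariance $e_m(\pi X, \pi Y) = e_m(X,Y)^p$ with the eigenvalue action to get $\zeta^{\lambda\mu} = \zeta^p$, which gives $\zeta^{2p} = 1$, $\zeta^{p+1} = 1$ and $\zeta^{p^2+p} = 1$ for the relevant pairs, and then strip the factor $p$ using $\gcd(m,p) = 1$. You also justify that coprimality via $m \mid p^2 - bp + 1$ and check that integer representatives modulo $m$ are legitimate, which are small details the paper leaves implicit.
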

\begin{proof}
  Generically, for two eigenpoints $D$ and $D'$ of eigenvalue $\lambda, \lambda'$ respectively, we may set
  $\zeta = e_m(D, D')$, and derive constraints on $\zeta$ using
    \[
       e_m( \pi(D), \pi(D') ) = \zeta^p \text{ and } e_m( \pi(D), \pi(D') ) = e_m( \lambda D, \lambda' D') = \zeta^{\lambda \cdot \lambda'}.
  \]
    For example, this gives as constraints on $\zeta_{P, Q} = e_m(P, Q)$ that $\zeta_{P, Q}^p = \zeta_{P, Q}^{-1}$ and hence
  $\zeta_{P, Q}^{p+1} = 1$, which implies $\zeta_{P, Q} \in \mu_{p+1}$.
  Similarly, we get
  \[
      \zeta_{P, R}^{0} = 1, \quad \zeta_{P, S}^{2p} = 1, \quad \zeta_{Q, R}^{2p} = 1, \quad \zeta_{Q, S}^{0} = 1,
      \quad \text{ and } \quad \zeta_{R, S}^{p^2 + p} = 1.
  \]
  Since $m$ and $p$ are coprime, this implies $\zeta_{P, S} \in \mu_2$ and
  $\zeta_{Q, R} \in \mu_2$. For $\zeta_{R, S}$, we can remove a factor $p$
  to obtain $\zeta_{R, S}^{p+1} = 1$.
  Thus, $e_m(P, S), e_m(Q, R) \in \mu_{\gcd({m, 2})}$, and $e_m(P, Q), e_m(R, S) \in \mu_{\gcd({m, p+1})}$.
\end{proof}
For $b = \pm 2$, \Cref{lem:almost symplectic basis} leaves an ambiguity at $2$, as it only determines the pairings between the eigenspaces of $\pm 1$ (for $b = 2$), or within them (for $b = -2$), up to $\mu_2$. We resolve it using the trace map $\pi + 1$, which interacts with the Weil pairing as follows. Note that we will work with $n = p \mp 1$ rather than with the exponent $m$ of $A(\Fp)$, so that statements can be made uniformly over the three group structures (since $|A(\Fp)| = n^2$ in all cases).

\begin{lemma}\label{lem:trace-pairing}
Let $A/\Fp$ be a supersingular abelian surface of type $b = \pm 2$, and let
$n = p \mp 1$. Then $A(\Fp) \subseteq A[n] \subseteq A(\Fpt)$, and
\[
  (\pi + 1)\,A[n] = A(\Fp), \qquad (\pi - 1)\,A[n] = A^t(\Fp)^\rho .
\]
Moreover, for all $R \in A[n]$, $Q \in A(\Fp)$ and $S \in A^t(\Fp)^\rho$,
\[
  e_n\big((\pi + 1)R, Q\big) = e_n(R, Q)^{p+1},
  \qquad
  e_n\big((\pi - 1)R, S\big) = e_n(R, S)^{-(p+1)} .
\]
\end{lemma}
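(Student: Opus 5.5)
We first establish the containments and then the two image identities, using only \Cref{rem:torsion-in-Fpt} and \Cref{thm: possible group structures of rational jacobians}. By \Cref{rem:torsion-in-Fpt}, for type $b=\pm 2$ we have $\pi^2 = [\pm p]$. Since $\pm p \equiv 1 \bmod n$ for $n = p \mp 1$, $\pi^2$ acts trivially on $A[n]$, which gives $A[n]\subseteq A(\Fpt)$. From \Cref{thm: possible group structures of rational jacobians}, the exponent of $A(\Fp)$ divides $n$, so $A(\Fp)\subseteq A[n]$; the same holds for $A^t(\Fp)^\rho = A[\pi+1]$ by \Cref{rem:twist-iso}.

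For the images, let $R\in A[n]$. Then $(\pi-1)(\pi+1)R = (\pi^2-1)R = 0$, so $(\pi+1)R\in \ker(\pi-1)=A(\Fp)$. Similarly, $(\pi+1)(\pi-1)R=0$ gives $(\pi-1)R\in A[\pi+1]=A^t(\Fp)^\rho$. This proves both inclusions $\subseteq$.

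For equality we count. The kernel of $\pi+1$ on $A[n]$ is $A[\pi+1]\cap A[n]=A^t(\Fp)^\rho$, which has order $|A^t(\Fp)|=n^2$. Hence $|(\pi+1)A[n]| = n^4/n^2 = n^2 = |A(\Fp)|$, and the inclusion is an equality. The symmetric count, with kernel $A(\Fp)$, gives $(\pi-1)A[n]=A^t(\Fp)^\rho$. The key inputs here are $|A(\Fp)|=|A^t(\Fp)|=n^2$ in all three group structures, and the fact that both eigenspaces lie inside $A[n]$.

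For the pairing identities, we use Galois equivariance of the Weil pairing. Since $A[n]\subseteq A(\Fpt)$ and $\pi$ is the $p$-Frobenius, $e_n(\pi X,\pi Y)=e_n(X,Y)^p$. Take $Q\in A(\Fp)$, so $\pi Q = Q$. Then
\[
e_n(\pi R, Q) = e_n(\pi R,\pi Q)=e_n(R,Q)^p,
\]
and bilinearity gives $e_n((\pi+1)R,Q)=e_n(R,Q)^{p+1}$. For $S\in A^t(\Fp)^\rho$ we have $\pi S=-S$, so $e_n(\pi R,-S)=e_n(R,S)^p$, i.e.\ $e_n(\pi R,S)=e_n(R,S)^{-p}$. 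Therefore
\[
e_n((\pi-1)R,S)=e_n(R,S)^{-p-1}.
\]

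The main point of care is the counting step. It requires that $A[\pi\pm1]\cap A[n]$ is exactly the full rational group of the surface (respectively its twist), which needs the exponent bound $m\mid n$ in each of the three group structures; it also requires that $\pi$ preserves $A[n]$, which is clear. The Weil pairing is taken with respect to the principal polarization, which is $\Fp$-rational and so commutes with $\pi$; this justifies the Frobenius-equivariance step.
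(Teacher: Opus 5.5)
Your proposal is correct and follows essentially the same route as the paper: \Cref{rem:torsion-in-Fpt} gives $A[n]\subseteq A(\Fpt)$ and $\pi^2-1=0$ on $A[n]$. A kernel count for $\pi+1$ (using $A^t(\Fp)^\rho\subseteq A[n]$ and $|A^t(\Fp)|=n^2$) upgrades $(\pi+1)A[n]\subseteq A(\Fp)$ to equality, and Frobenius-equivariance $e_n(\pi X,\pi Y)=e_n(X,Y)^p$ together with $\pi Q=Q$, $\pi S=-S$ yields the pairing identities. You are slightly more explicit than the paper about the sign computation for $S$ and about why equivariance holds (the principal polarization is $\Fp$-rational), but there is no substantive difference.
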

\begin{proof}
	The exponent of $A(\Fp)$ divides $n$, so
	$A(\Fp) \subseteq A[n]$; the same holds for $A^t$, which is again of type $b = \pm 2$. By~\Cref{rem:torsion-in-Fpt},
	$A[n] \subseteq A(\Fpt)$ and $(\pi - 1)(\pi + 1) = \pi^2 - 1 = 0$ on $A[n]$.
	On $A[n]$, the kernel of $\pi - 1$ is $A(\Fp)$. The kernel of $\pi + 1$ is $A^t(\Fp)^\rho$ that is of order $n^2$, so $(\pi + 1)A[n]$ has order $n^4/n^2 = n^2$. It is contained in
	$\ker(\pi - 1) = A(\Fp)$, thus $(\pi + 1)A[n] = A(\Fp)$. Similarly,
	$(\pi - 1)A[n] = A^t(\Fp)^\rho$.

	Moreover,
	$e_n(\pi R, \pi T) = e_n(R, T)^p$ for all $R, T \in A[n]$. If $Q \in A(\Fp)$, then $e_n(\pi R, Q) = e_n(\pi R, \pi Q) = e_n(R, Q)^p$, and multiplying by
	$e_n(R, Q)$ gives the first identity. For $S \in A^t(\Fp)^\rho$, we have $\pi S = -S$ giving the second identity.
\end{proof}
An overview of these structural results is given in \Cref{tab:jacobian-summary}.

\begin{table}[ht!]
	\centering
	\caption{Properties of supersingular abelian surfaces $A/\Fp$ by $b$-type, for $p > 5$.
	  Here $m$ denotes the exponent of $A(\Fp)$, the embedding degree is the smallest $k$ with $m \mid p^k - 1$,
	  and the full $m$-torsion field is the smallest extension of $\Fp$ over which $A[m]$ is defined.}
	\label{tab:jacobian-summary}
	\vspace{6pt}
	\renewcommand{\arraystretch}{1.4}
	\begin{tabular*}{\textwidth}{@{\extracolsep{\fill} } c c c c c}
	\toprule
	$b$ & Group structure $A(\Fp)$ & $|A(\Fp)|$ & Emb.\ deg.\ of $m$ & Full $m$-torsion \\
	\midrule
	$-2$ &
	  \makecell[c]{
				$(\ZZ/(p+1)\ZZ)^2$ or\\[2pt]
				$\ZZ/(p+1)\ZZ \times \ZZ/\tfrac{p+1}{2}\ZZ \times \ZZ/2\ZZ$ or\\[2pt]
				$(\ZZ/\tfrac{p+1}{2}\ZZ \times \ZZ/2\ZZ)^2$
				}
	  & $(p+1)^2$ & $2$ & $\Fpt$ \\[6pt]
	\midrule
	$-1$ & $\ZZ/(p^2+p+1)\ZZ$ & $p^2+p+1$ & $3$ & $\Fps$ \\
	\midrule
	$0$ & $\ZZ/(p^2+1)\ZZ$  & $p^2+1$ & $4$ & $\Fpf$ \\
	\midrule
	$1$ & $\ZZ/(p^2-p+1)\ZZ$ & $p^2-p+1$ & $6$ & $\Fps$ \\
	\midrule
	$2$ &
	  \makecell[c]{$(\ZZ/(p-1)\ZZ)^2$ or \\[2pt]
				   $\ZZ/(p-1)\ZZ \times \ZZ/\tfrac{p-1}{2}\ZZ \times \ZZ/2\ZZ$ or\\[2pt]
				   $(\ZZ/\tfrac{p-1}{2}\ZZ \times \ZZ/2\ZZ)^2$}
	  & $(p-1)^2$ & $1$ & $\Fpt$ \\
	\bottomrule
	\end{tabular*}
	\medskip

\end{table}

\begin{remark}
	For $b = 1$, a Jacobian with the corresponding group structure does not exist when $p \equiv 1 \pmod{6}$,
	as shown by~\textcite{howe2009}.
	Furthermore,
	when $p \equiv 1 \mod 4$, all three rows for $b = -2$ are equivalent, as $\tfrac{p+1}{2}$ is odd,
	and when $p \equiv 3 \mod 4$, all three rows for $b = 2$ are equivalent, as $\tfrac{p-1}{2}$ is odd.
\end{remark}

\subsection{Superspecial Jacobians}\label{sec:superspecial criterion}

Thus far, we have only considered the supersingular case; it is also important to understand the superspecial case, particularly for cryptographic applications. In this section, we focus in particular on Jacobians, as we will appeal to results about the associated genus-$2$ hyperelliptic curve,
and products of supersingular elliptic curves are by definition superspecial.
To show that a Jacobian is superspecial, we require two results from the literature that allow us to
reduce to determining a simpler property: minimality or maximality. Essentially, superspecial
Jacobians are always related to the supersingular varieties that reach the minimal or maximal
number of points. The first result we need is \Cref{lem:poonen en co}, which relates maximality to superspeciality
for abelian varieties. The next result connects this to curves.

\begin{lemma}[Thm. 2.6, \ \cite{superspecial_note}]\label{lem:superspecial note}
	Let $C$ be a curve over $\Fpt$. Then $C$ is superspecial if and only if $C$ has a twist
	over $\Fpt$ that is minimal or maximal.
\end{lemma}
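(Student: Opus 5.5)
We prove the two implications separately, passing between curve and Jacobian: a curve $C/\Fpt$ is superspecial (resp. minimal or maximal) exactly when $\J = \mathrm{Jac}(C)$ is. We also use that twists of $C$ over $\Fpt$ correspond to twists of $\J$ respecting the principal polarization, via $\Aut(C)\to\Aut(\J,\lambda)$.

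\emph{($\Leftarrow$)} Suppose $C$ has a twist $C'$ over $\Fpt$ that is minimal or maximal. Then $\mathrm{Jac}(C')$ is extremal over $\Fpt$, so \Cref{lem:poonen en co} shows it is superspecial. Superspeciality is a geometric property (isomorphism over $\Fpbar$ to a product of supersingular elliptic curves). Since $C'\cong C$ over $\Fpbar$, the Jacobians are isomorphic over $\Fpbar$, and so $C$ is superspecial.

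\emph{($\Rightarrow$)} This direction is the main obstacle. Let $C/\Fpt$ be superspecial, so $\J\cong E^g$ over $\Fpbar$ for a supersingular $E$. By Deuring, $E$ can be chosen over $\Fpt$ with Frobenius $[-p]$, i.e. $E$ is maximal, and then $E^g$ with its product polarization is maximal.

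The key step is to show that $C$ itself admits a model $C_0$ over $\Fpt$ whose Jacobian has Frobenius $\pi=[\mp p]$. The cleanest route is via the Cartier operator: superspeciality means the Cartier--Manin (Hasse--Witt) matrix vanishes. By Ekedahl's result, a curve with vanishing Cartier operator descends to a model over $\Fpt$ whose $p^2$-Frobenius acts as $[\pm p]$ on the Jacobian. Concretely, over $\Fpbar$ the kernel $\J[F]$ of the $p$-Frobenius equals $\J[V]$, because superspeciality gives $a$-number $g$. Hence $F^2=[p]\cdot u$ for some automorphism $u$ of $\J$ preserving the principal polarization.

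For the descent itself, we write the $\Fpt$-Frobenius of $\J$ as $\pi = [p]\circ u$ (up to sign), with $u\in\Aut(\J,\lambda)$ of finite order. By Torelli, $u$ comes from $\pm\Aut(C)$, so we can form the twist of $C$ by the cocycle determined by $u^{-1}$ (or $-u^{-1}$). Twisting changes the Frobenius to $\pi' = \pi\circ \xi$ with $\xi$ the corresponding automorphism. Since $[p]$ commutes with everything, this yields $\pi'=[\pm p]$. This twist $C'$ is defined over $\Fpt$. Its Jacobian has Weil polynomial $(x\pm p)^{2g}$, so it is maximal or minimal, and $C'$ is therefore maximal or minimal.

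The delicate points are as follows. First, the claim $\pi=[p]u$ with $u$ an automorphism relies on $\ker\pi=\J[p]$, and this needs $\J[F]=\J[V]$ over the base field, not merely a comparison of orders. Second, a polarization-preserving automorphism need not be in $\Aut(C)$ itself but only in $\pm\Aut(C)$, which is why the sign, and hence the choice between minimal and maximal, appears. I would try first to establish $\ker\pi=\J[p]$ via the $a$-number computation and Dieudonné modules.
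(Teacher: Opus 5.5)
Your argument is correct. It also does something the paper does not: the paper gives no proof of this lemma and simply imports it as Thm.~2.6 of \cite{superspecial_note}. Only your ($\Leftarrow$) direction is visible in the paper, via \Cref{lem:poonen en co}, exactly as you use it. Your ($\Rightarrow$) is the standard Ekedahl-type argument, and it goes through once the key step is stated precisely:
\begin{itemize}
\item Read ``$\J[F]=\J[V]$'' as $FM=VM$ on the Dieudonn\'e module, i.e.\ $\ker F_{\J^{(p)}}=\ker V_{\J}$ inside $\J^{(p)}$. The two kernels a priori live on different varieties.
\item Then $\ker\pi=F^{-1}(\ker V)=\ker(VF)=\J[p]$, which is the one line behind your ``Hence''. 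So $\pi=u\circ[p]$ with $u$ an automorphism.
\item $u$ preserves the principal polarization because $\hat\pi\lambda\pi=p^2\lambda$ forces $\hat u\lambda u=\lambda$.
\end{itemize}
Your first ``delicate point'' is not an obstacle. Equality of the $\Fpt$-subgroup schemes $\ker\pi$ and $\J[p]$ can be checked over $\Fpbar$. Also, $u$ is automatically $\Fpt$-rational: every Galois conjugate $u'$ satisfies $u'\circ[p]=\pi$, and $[p]$ is an epimorphism.

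I would delete two passages. The Deuring paragraph leads nowhere, since twists of $C$ must respect the principal polarization of $\J$, which is not the product polarization on $E^g$. The bare appeal to ``Ekedahl's result'' only restates the implication you want, because any two $\Fpt$-models of $C$ are twists of each other. The concrete argument should be the proof.

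Compared with the paper's citation, your route costs Oort's $a$-number characterization of superspeciality, Dieudonn\'e theory and Torelli (for $g\ge 2$). What it buys is an explicit description of the twist, which fits how the paper uses the lemma later.
\begin{itemize}
\item The twisting automorphism is forced to be $\sigma$ with $\sigma_*=\pm\pi/p$. Since $\Aut(C)\to\Aut(\J)$ is injective, $\sigma$ is $\Fpt$-rational, so in \Cref{alg:superspecial-test} it would suffice to try $\Fpt$-rational $\sigma$.
\item Your sign discussion makes precise a dichotomy hidden in the phrase ``minimal or maximal''. For hyperelliptic $C$, where $-1=\iota_*$, a superspecial curve has both a minimal and a maximal twist over $\Fpt$, differing by the quadratic twist. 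For non-hyperelliptic $C$, exactly one of the two occurs.
\end{itemize}
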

\begin{remark}
    There is a significant difference in behaviour between Jacobians over $\Fp$ of order $(p+1)^2$,
    and Jacobians over $\Fpt$ of order $(p+1)^4$. Namely, the first category contains both ordinary and supersingular Jacobians,
    and only the supersingular Jacobians of order $(p+1)^2$ become maximal, i.e., of order $(p+1)^4$ over $\Fpt$.
    The second category only contains supersingular, and by \Cref{lem:poonen en co} superspecial, Jacobians.
\end{remark}

From this, we get a simple criterion for superspeciality of principally polarized Jacobians: a Jacobian $\J$ is superspecial if and only if the associated curve $C$ is superspecial. \Cref{lem:superspecial note} shows that
this implies a maximal or minimal twist.
We derive our main criterion for superspeciality as a direct corollary to \Cref{lem:superspecial note}.

\begin{corollary}\label{thm:superspeciality if twist}
	Let $A$ be a principally polarized abelian surface over $\Fp$ or $\Fpt$.
	Then $A$ is superspecial if and only if $A$ has a twist over $\Fpt$ that is minimal or maximal.
\end{corollary}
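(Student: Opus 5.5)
The plan is to reduce the statement to \Cref{lem:superspecial note} by passing through the classification of principally polarized abelian surfaces from \Cref{sec:prelim}, and then to handle the non-Jacobian cases separately. First, replace $A$ by its base change to $\Fpt$ if it is defined over $\Fp$. Superspeciality is a geometric property, since it concerns isomorphism over $\Fpbar$ to a product of supersingular elliptic curves, so it is unchanged by this base change. Likewise, a twist of $A$ over $\Fpt$ means an $\Fpt$-form of $A_{\Fpt}$. Over $\Fpt$, $A$ is then either a Jacobian of a genus-$2$ curve $C$ or a product $E_1 \times E_2$, because over a quadratic extension a Weil restriction becomes a product.

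In the Jacobian case, write $A = \J(C)$. Twists of $\J(C)$ as a principally polarized abelian surface correspond to twists of $C$, by the remark in \Cref{sec:prelim} that every twist of a Jacobian is induced by a twist of the curve. Torelli's theorem gives $\Aut(\J(C),\lambda) = \Aut(C)$, so the twists of $C$ and of $(\J(C),\lambda)$ match up. A twist $C'$ is minimal or maximal exactly when $\J(C')$ is, because the point counts satisfy $|\J(C')(\Fpt)| = \phi_{\J(C')}(1)$ and minimality or maximality of the curve is equivalent to the Weil polynomial being $(x \mp p)^4$. By definition, $\J(C)$ is superspecial if and only if $C$ is. \Cref{lem:superspecial note} then gives the equivalence directly.

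In the product case $A = E_1 \times E_2$ over $\Fpt$, the argument splits into two directions.
\begin{itemize}
\item Suppose $A$ has a minimal or maximal twist. That twist is superspecial by \Cref{lem:poonen en co}, and hence so is $A$, since superspeciality is geometric.
\item Conversely, suppose $A$ is superspecial, so each $E_i$ is supersingular. Every supersingular elliptic curve over $\Fpt$ has a twist over $\Fpt$ with trace $\pm 2p$. Concretely, it is $\Fpbar$-isomorphic to a maximal curve, which can be chosen over $\Fpt$, and that maximal curve is therefore a twist of $E_i$. Choosing maximal twists $E_1', E_2'$ gives the twist $E_1' \times E_2'$ of $A$, which is maximal.
\end{itemize}

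The main obstacle I expect is in the Jacobian case: making sure that ``twist'' means the same thing for the curve in \Cref{lem:superspecial note} and for the polarized abelian surface. This requires Torelli's theorem, which identifies $\Aut(C)$ with the polarized automorphisms, so that the twists of $C$ are exactly the polarized twists of $\J(C)$. The step is routine, but it is where the principal polarization is genuinely used.
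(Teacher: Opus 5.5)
Your Jacobian case is the paper's argument. The paper gives no separate proof: it calls the statement a direct corollary of \Cref{lem:superspecial note}, together with the convention that $\J$ is superspecial if and only if $C$ is. You add two things to this. First, you make the Torelli identification $\Aut(C)\cong\Aut(\J(C),\lambda)$ explicit, which is what makes ``twist of $C$'' and ``polarized twist of $\J(C)$'' mean the same thing. Second, you treat non-Jacobian surfaces, which the paper's one-line derivation silently skips. For these you bypass \Cref{lem:superspecial note}: you use \Cref{lem:poonen en co} in one direction and maximal $\Fpt$-models of supersingular elliptic curves in the other. Both directions are sound.

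There is one slip in your case split. You claim that after moving to $\Fpt$ every principally polarized surface is a Jacobian or a product. That holds only when $A$ was originally defined over $\Fp$. If $A$ is given over $\Fpt$ as the Weil restriction of an elliptic curve $E/\mathbb{F}_{p^4}$, no base change happens. It then stays a Weil restriction over $\Fpt$ and falls into neither of your cases.

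The fix costs nothing. Your product argument only uses that $A_{\Fpbar}$, as a polarized variety, is $E_1\times E_2$ with the product polarization; here $E_1=E$ and $E_2$ is its conjugate under $x\mapsto x^{p^2}$. So take $E_1',E_2'$ to be maximal $\Fpt$-models of the geometric factors. Then $E_1'\times E_2'$ is still a polarized $\Fpt$-form of $A$, and it is maximal. Your forward direction via \Cref{lem:poonen en co} already works in every case. State your second case as ``$A$ is geometrically a polarized product of elliptic curves'', and the proof is complete.
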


\noindent We refer to non-minimal or non-maximal superspecial Jacobians as \emph{twisted superspecial Jacobians}
to emphasize the dichotomy between minimal and maximal superspecial Jacobians and those Jacobians that require a twist
to become minimal or maximal.
For the quadratic twist $\J^t$, we know that $\phi_{\J^t}(x) = \phi_\J(-x)$, and this equals $\phi_\J(x)$
for superspecial Jacobians over $\Fp$, as in the proof of \Cref{lem: the twist is enough}.
Hence, twisted superspecial Jacobians must arise from non-trivial twists
over a larger extension field. Using two known results, we identify curves with
additional twists and the effect on characteristic polynomials.

\begin{lemma}[Ch. X, Thm 2.2(c), \cite{silverman}]\label{lem:twists}
	Let $C$ be a curve over a field $k$ and let $G_k$ denote the Galois group $\Gal(\kbar / k)$.
	Then there is a bijection
	\[
		\Twists(C/k) \to H^1(G_k, \Aut(C)).
	\]
\end{lemma}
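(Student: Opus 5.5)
The plan is to prove the bijection $\Twists(C/k) \to H^1(G_k, \Aut(C))$ by the standard Galois-descent argument, mirroring the elliptic-curve case in \cite{silverman}. Here $\Aut(C)$ denotes the automorphisms over $\kbar$, with $G_k$ acting by $\sigma \cdot \alpha = \alpha^\sigma$ (conjugating the coefficients of $\alpha$).

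\textbf{Step 1: the map.} Let $C'/k$ be a twist and fix a $\kbar$-isomorphism $\phi \colon C' \to C$. Define $\xi_\sigma = \phi^\sigma \circ \phi^{-1} \in \Aut(C)$. A direct computation gives
\[
  \xi_{\sigma\tau} = (\phi^{\sigma})^{\tau}\phi^{-1} = (\phi^\sigma \phi^{-1})^\tau (\phi^\tau\phi^{-1}) = \xi_\sigma^\tau \xi_\tau,
\]
which is the cocycle condition. Moreover $\xi$ is continuous, because $\phi$ is defined over a finite extension of $k$.

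\textbf{Step 2: well-definedness and injectivity.} Replacing $\phi$ by $\alpha\phi$ with $\alpha\in\Aut(C)$ changes $\xi_\sigma$ to $\alpha^\sigma\xi_\sigma\alpha^{-1}$, which is a cohomologous cocycle. If instead we replace $C'$ by a $k$-isomorphic curve $C''$ via $\psi$ defined over $k$, then using $\phi\psi$ gives the same cocycle, since $\psi^\sigma=\psi$. So the class is well defined on $k$-isomorphism classes. For injectivity, suppose the cocycles of $(C',\phi)$ and $(C'',\phi')$ are cohomologous: $\phi'^\sigma\phi'^{-1} = \alpha^\sigma\phi^\sigma\phi^{-1}\alpha^{-1}$. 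Then $\psi = \phi'^{-1}\alpha\phi \colon C' \to C''$ satisfies $\psi^\sigma = \psi$ for all $\sigma$. Hence $\psi$ is defined over $k$, because $\kbar^{G_k}=k$ for perfect $k$ and in general by the separable-closure version.

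\textbf{Step 3: surjectivity.} This is the main obstacle. Given a cocycle $\xi$, we must construct a curve $C'/k$ and a $\kbar$-isomorphism $\phi\colon C' \to C$ with $\phi^\sigma\phi^{-1}=\xi_\sigma$. First, $\xi$ factors through $\Gal(L/k)$ for some finite Galois $L/k$. Next, twist the Galois action on the function field $L(C)$ by setting $\sigma^*f = (\xi_\sigma^{-1})^* (f^\sigma)$; the cocycle condition makes this a genuine semilinear action of $\Gal(L/k)$. By Hilbert 90 in its nonabelian form for $\mathrm{GL}_n$ (Galois descent for vector spaces), the fixed field $F = L(C)^{\Gal(L/k)}$ is a function field of one variable over $k$ with $F\otimes_k L \cong L(C)$. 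Let $C'$ be the smooth projective curve with function field $F$. The identification $F L = L(C)$ yields $\phi$, and unwinding the twisted action shows that the cocycle of $\phi$ is $\xi$. Alternatively, and more concretely, one can embed $C$ projectively and descend the ideal, again by Hilbert 90.

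I expect Step 3, the descent, to be the only nonformal part. In the paper's setting ($k$ finite, with $C$ of genus $2$, so $\Aut(C)$ is finite) it could simply be cited as \cite[Ch. X, Thm 2.2(c)]{silverman} together with the descent statement \cite[Ch. II, Lemma 5.8.1]{silverman}.
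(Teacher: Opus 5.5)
Your argument is the standard Galois-descent proof of \cite[Ch.~X, Thm.~2.2]{silverman}; the paper gives no proof of its own and simply cites that theorem, so your route is the same as the paper's. One slip needs fixing in Step~3: with your convention $\xi_\sigma=\phi^\sigma\circ\phi^{-1}$ and $\xi_{\sigma\tau}=\xi_\sigma^\tau\xi_\tau$, the twisted action must be $f\mapsto \xi_\sigma^*(f^\sigma)=f^\sigma\circ\xi_\sigma$, since these are exactly the invariance conditions satisfied by $f=g\circ\phi^{-1}$ with $g\in k(C')$. Your $(\xi_\sigma^{-1})^*(f^\sigma)$ composes correctly only when $\xi_\tau$ and $\xi_\sigma^\tau$ commute, which fails for the non-abelian automorphism groups that arise in the paper.
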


We can therefore associate to any twist $C'$ of $C$ some automorphism $\sigma \in \Aut(C)$,
and we will denote $C'$ by $C^\sigma$ in such a case.
Luckily, for a hyperelliptic curve $C$ of genus 2, there are not too many options for $\Aut(C)$:
We always have $[-1] \in \Aut(C)$, so we instead analyse
the reduced automorphism group
$\RA(C) := \Aut(C) / \langle -1 \rangle$. \textcite{bolza} shows that there are only 7 options for $\RA(C)$.
For every option of $\RA(C)$, \textcite{cardano} computed the corresponding cohomology group.
Recall that we are mostly interested in the change of the characteristic polynomial $\phi_{C^\sigma}(x)$
in terms of $\phi_C(x)$, which is exactly what \textcite{howe2009} have described for
minimal and maximal curves over $\Fpt$.

\begin{lemma}[Prop. 13.1, \cite{howe2009}]\label{lem:effect-twist-on-minpol}
	Let $C$ be a minimal or maximal supersingular curve over $\Fpt$, so that $\pi$ acts as $[\pm p]$.
	Let $C^\sigma$ be a twist of $C$ induced by $\sigma \in \Aut(C)$, and let $f_\sigma$ denote the characteristic polynomial of $\sigma$.
	Then
		\[
		\phi_{C^\sigma}(x) = p^4 \cdot f_\sigma( x / \pm p).
	\]
	\end{lemma}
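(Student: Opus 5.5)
The plan is to compute the Frobenius of the twist directly from the twisting cocycle. By \Cref{lem:twists}, the twist $C^\sigma$ of $C$ over $\Fpt$ corresponds to a cocycle in $H^1(G_{\Fpt}, \Aut(C))$. Since $G_{\Fpt}$ is procyclic, generated topologically by the $p^2$-power Frobenius $\tau$, the cocycle is determined by its value $\sigma$ on $\tau$. Fix an isomorphism $\psi\colon C \to C^\sigma$ defined over $\Fpbar$ with $\psi^{-1}\circ{}^{\tau}\psi = \sigma$, and transport everything to Jacobians.

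With this normalization, the $p^2$-Frobenius $\pi'$ of $\J(C^\sigma)$ corresponds under $\psi$ to $\sigma\circ\pi$ (or $\pi\circ\sigma$, depending on the convention) acting on $\J(C)$. That is,
\[
\psi^{-1}\pi'\psi = \sigma\pi .
\]
To see this, compare the Frobenius on each side using ${}^\tau\psi = \psi\sigma$. The key input is then that $C$ is minimal or maximal, so $\pi = [\pm p]$ on $\J(C)$; this is the statement's hypothesis, and it follows from \Cref{lem:poonen en co} together with the fact that the Weil polynomial is $(x\mp p)^4$ and the minimal polynomial is squarefree, as in \Cref{rem:torsion-in-Fpt}. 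Hence $\psi^{-1}\pi'\psi = \pm p\,\sigma$, where $\sigma$ now denotes the induced automorphism of $\J(C)$. Scalars commute with everything, so the ordering convention becomes irrelevant.

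Next, compute the characteristic polynomial as the degree-4 polynomial of $\pi'$ acting on the $\ell$-adic Tate module $T_\ell \J(C^\sigma)\cong T_\ell\J(C)$. Here $f_\sigma$ is the characteristic polynomial of $\sigma$ on $T_\ell$, whose roots $\zeta_i$ are roots of unity. The roots of $\phi_{C^\sigma}$ are then $\pm p\,\zeta_i$, so
\[
\phi_{C^\sigma}(x)=\prod_i (x\mp p\zeta_i) = (\pm p)^4\prod_i\Bigl(\frac{x}{\pm p}-\zeta_i\Bigr)=p^4 f_\sigma(x/\pm p).
\]
Conjugation by $\psi$ preserves characteristic polynomials, which justifies passing from $\pi'$ to $\pm p\,\sigma$.

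The main obstacle I expect is the bookkeeping in identifying the Frobenius of the twist with $\sigma\pi$. One has to check the cocycle convention carefully, and confirm that $\sigma$ acting on the curve induces a well-defined automorphism of the Jacobian with the same $f_\sigma$. I would first verify the formula in the trivial case $\sigma=1$ and in the case $\sigma=[-1]$, where it should recover $\phi(-x)$ as in the proof of \Cref{lem: the twist is enough}, and then argue that the general case is formally identical.
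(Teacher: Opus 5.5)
Your proposal is correct. The paper gives no proof of its own and only cites Howe--Nart--Ritzenthaler; your argument is the standard proof of that cited result: transport the Frobenius of the twist through the twisting isomorphism to get $\psi^{-1}\pi'\psi = \sigma\pi = \pm p\,\sigma$, then read off eigenvalues on $T_\ell$. The convention issue you leave open, namely whether the cocycle yields $\sigma$ or $\sigma^{-1}$, is harmless: $\sigma$ has finite order and an integral characteristic polynomial, so its eigenvalues are roots of unity closed under complex conjugation, and hence $f_{\sigma^{-1}} = f_\sigma$.
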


\noindent Using \cite[Table 4]{howe2009}, we know all possible minimal polynomials of automorphisms of genus-2 curves.
and may compute which possible automorphisms twist a curve of type $b = \pm 2$ into a curve of type $b \in \{-1, 0, 1\}$.
We describe the non-trivial automorphisms $\tau$ and $\rho$ on genus-2 hyperelliptic curves in \Cref{app:automorphisms}.
\begin{proposition}
	Let $C$ be a hyperelliptic curve of genus 2 over $\Fp$ of type $b = \pm 2$
	 so that $C/\Fpt$ is extremal.
	Then
	\begin{itemize}
		\item if $\tau \in \RA(C)$, the twist $C^{\tau}$ over $\Fpt$ has $f_\tau = (x^2 + 1)^2$, so is the base change of a type $b = 0$ surface over $\Fp$,
		\item if $\rho \in \RA(C)$, the twist $C^{\rho}$ over $\Fpt$ has $f_\rho = (x^2 + x + 1)^2$, so it is the base change of a type $b = \pm 1$ surface over $\Fp$.
	\end{itemize}%
\end{proposition}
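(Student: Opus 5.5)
The plan is to combine \Cref{lem:effect-twist-on-minpol} with the explicit characteristic polynomials of $\tau$ and $\rho$ from \cite[Table 4]{howe2009}, and then identify the resulting Weil polynomial over $\Fpt$ as the base change of one of the five types in \Cref{lem:supersingular_orders}. Since $C/\Fp$ has type $b = \pm 2$, \Cref{rem:torsion-in-Fpt} gives $\pi_p^2 = [\pm p]$. Hence the $p^2$-Frobenius acts as $[\pm p]$ and $C/\Fpt$ is minimal or maximal, so \Cref{lem:effect-twist-on-minpol} applies. I will use $\phi_{C^\sigma}(x) = p^4 f_\sigma(x/\pm p)$.

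First, for $\tau$: the automorphism $\tau$ of \Cref{app:automorphisms} has order $4$ on the curve, with $\tau^2 = [-1]$ (it lifts an involution of $\RA(C)$). Its eigenvalues on the $2$-dimensional space of differentials, and hence on $T_\ell$, are $\pm i$, each with multiplicity $2$. This gives $f_\tau = (x^2+1)^2$, which should match Howe's table. Substituting,
\[
p^4 f_\tau(x/\pm p) = (x^2 + p^2)^2 = x^4 + 2p^2x^2 + p^4 .
\]
Writing $\phi_q(x) = \phi_p(x)\phi_p(-x)$ evaluated at $x^2$ for the type $b$ polynomial $\phi_p(x) = x^4 - bpx^2 + p^2$ gives
\[
\phi_q(x) = x^4 - (b^2 - 2)p^2 x^2 + p^4 .
\]
The coefficient $-(b^2-2) = 2$ forces $b = 0$, and indeed $(x^2+p^2)^2$ is the base change of $x^4 + p^2$.

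Second, for $\rho$: it has order $3$ in $\RA(C)$. Choosing the lift of order $3$ (rather than $6$), its eigenvalues on $T_\ell$ are the primitive cube roots of unity, so $f_\rho = (x^2+x+1)^2$. Substituting gives
\[
p^4 f_\rho(x/\pm p) = (x^2 \pm px + p^2)^2 .
\]
This matches \Cref{lem:order-p2-p-1}, where the coefficient of $x^2$ equals $3p^2$, giving $b^2 - 2 = -1$ and hence $b = \pm 1$. The sign depends on whether $C$ is minimal or maximal and on the chosen lift. Alternatively, the twisted Jacobian has $|A(\Fpt)| = (p^2 \pm p + 1)^2$, so \Cref{lem:order-p2-p-1} applies directly.

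The main obstacle is the descent claim that the twist over $\Fpt$ "is the base change of" a surface over $\Fp$. Matching Weil polynomials only shows that the twist is isogenous to such a base change. To get the actual statement, I would try to construct the twist already over $\Fp$. If $\sigma$ is defined over $\Fp$, the cocycle $\mathrm{Frob}_p \mapsto \sigma'$, with $\sigma'^2 = \sigma$ up to $\pm1$, yields a twist $C'/\Fp$ whose base change is $C^\sigma$. If that fails, I would settle for the weaker reading: the Weil polynomial equals $\phi_q$ for type $b$, which is what the subsequent algorithms use. A secondary point is to check Howe's table for the correct lift of $\tau$ and $\rho$ in $\Aut(C)$, since choosing $-\sigma$ changes $f_\sigma(x)$ to $f_\sigma(-x)$ and hence flips the sign of $b$ in the $\rho$ case.
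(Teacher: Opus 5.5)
\textbf{The gap: a wrong base-change formula.} Your overall route is the one the paper intends. The paper gives no separate proof; the proposition rests on \Cref{lem:effect-twist-on-minpol}, the characteristic polynomials from Howe's Table 4, and a comparison with Weil polynomials of base changes. The comparison formula you use, however, is wrong. Since $\phi_p(x) = x^4 - bpx^2 + p^2$ is even, $\phi_q(x^2) = \phi_p(x)\phi_p(-x) = \phi_p(x)^2$, so
\[
\phi_q(y) = (y^2 - bpy + p^2)^2 = y^4 - 2bp\,y^3 + (b^2+2)p^2y^2 - 2bp^3y + p^4 .
\]
This is the correspondence $b \mapsto (2b, b^2+2)$ recorded in the ``Lifting $\Fp$ Curves'' paragraph of \Cref{app:twisted superspecials}.

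Your $y^4 - (b^2-2)p^2y^2 + p^4$ equals $g(y)g(-y)$ for $g(y) = y^2 - bpy + p^2$. In other words, you applied the squaring identity to the quadratic in $x^2$ rather than to $\phi_p$ itself. The two formulas agree only at $b = 0$, which is why your $\tau$ case comes out right.

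In the $\rho$ case the step genuinely fails:
\begin{itemize}
\item Your $\phi_q$ has no $y^3$ term, so it can never equal $(y^2 \pm py + p^2)^2$.
\item Matching $y^2$-coefficients in your formula would give $-(b^2-2) = 3$, i.e.\ $b^2 = -1$.
\item You reach $b = \pm 1$ only through a second slip, writing $b^2 - 2 = -1$.
\end{itemize}
With the correct formula, $(b^2+2)p^2 = 3p^2$ and $-2bp = \pm 2p$ give $b = \mp 1$, with the sign read off from the $y^3$-coefficient. For example, for maximal $C$ (so $\pi = [-p]$) the order-$3$ lift gives $p^4 f_\rho(-x/p) = (x^2 - px + p^2)^2$, i.e.\ $b = 1$. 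The order-$6$ lift $-\rho$, with $f_{-\rho}(x) = f_\rho(-x)$, gives $b = -1$.

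\textbf{Two smaller points.} First, $\tau^2 = \iota$ is part of how $\tau$ is specified in \Cref{app:automorphisms}. It is not a consequence of $\tau$ lifting an involution of $\RA(C)$. An involution of $\RA(C)$ may lift to a genuine involution, whose characteristic polynomial $(x-1)^2(x+1)^2$ yields the type $(0,-2)$ twist of \Cref{thm:twisting superspecial2} rather than $(x^2+p^2)^2$.

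Second, on descent: the paper offers nothing beyond the Weil-polynomial comparison, so your ``weaker reading'' is exactly what is actually supported. Your cocycle construction does work for $\rho$ when $\rho$ is $\Fp$-rational: take $\sigma' = \rho^2$, so $\sigma'^2 = \rho$. For $\tau$, however, it needs an automorphism $\sigma'$ of order $8$ with $\sigma'^2 = \tau^{\pm 1}$. Such an automorphism is not available in general, so this cannot serve as the general descent argument.
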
%
\noindent From this we find that twisted superspecial curves and Jacobians are very sparse,
as in general Jacobians with non-trivial automorphisms beyond $[-1]$
are sparse in the 3-dimensional space of generic Jacobians:
\begin{itemize}
	\item $\tau \in \RA(C)$ implies $\RA(C) \cong \ZZ_2 \times \ZZ_2$, $\RA(C) \cong S_4$, or $\RA(C) \cong D_{2 \times 6}$
	which implies $\J$ is $(2,2)$-isogenous to a square $E^2$
	for some curve $E$~\cite{atlas}. This describes a 1-dimensional space of Jacobians.
	\item $\rho \in \RA(C)$ implies $\RA(C) \cong S_3$, $\RA(C) \cong S_4$, or $\RA(C) \cong D_{2 \times 6}$, which implies $\J$ is $(2,2)$-isogenous to a product $E \times E'$
	for some curves $E$ and $E'$ that are 3-isogenous~\cite{atlas}. This describes a 1-dimensional space of Jacobians.
\end{itemize}

Thus, we can very quickly identify the twisted superspecial Jacobians by their automorphism groups, which can be computed efficiently
directly from the curve equation or most sets of invariants, such as Igusa or Clebsch invariants~\cite{bolza,shaska}.
Summarizing, we get the following.

\begin{theorem}\label{thm:twisting superspecial}
	Let $\J$ be the Jacobian over $\Fp$ of a hyperelliptic curve $C$ of genus 2. Then $\J$ is superspecial if and only if
	\begin{itemize}
		\item $\J$ is of type $b = \pm 2$,
		\item $\J$ is of type $b = \pm 1$ and $\RA(C)$ is equal to $S_3$, $S_4$, or $D_{2 \times 6}$,
		\item $\J$ is of type $b = \phantom{-}0$ and $\RA(C)$ is equal to $\ZZ_2 \times \ZZ_2$, $S_4$, or $D_{2 \times 6}$.
	\end{itemize}
\end{theorem}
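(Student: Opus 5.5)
The plan is to reduce everything to \Cref{thm:superspeciality if twist}: $\J$ is superspecial if and only if some twist of $\J$ over $\Fpt$ is minimal or maximal. Superspecial implies supersingular, so we may assume throughout that $\J$ is supersingular of type $b \in \{-2,\dots,2\}$ by \Cref{lem:supersingular_orders}. The Weil polynomial over $\Fpt$ of a type-$b$ surface is obtained from $\phi_p(x)\phi_p(-x) = \phi_q(x^2)$. For $b=\pm 2$ this gives $\phi_q(x) = (x \mp p)^4$, i.e.\ $|\J(\Fpt)| = (p\mp1)^4$. Then $\J/\Fpt$ is extremal, hence superspecial by \Cref{lem:poonen en co}, which settles the first bullet in both directions.

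For $b \in \{-1,0,1\}$ the base change to $\Fpt$ has $\phi_q(x) = (x^2 + p^2)^2$ for $b=0$ and $(x^2 \pm p x + p^2)^2$ for $b=\pm1$. In either case $\J/\Fpt$ is not extremal, so superspeciality must come from a nontrivial twist. The \emph{if} direction runs as follows. If $\RA(C)$ is one of the listed groups, it contains $\tau$ (for $b=0$) or $\rho$ (for $b=\pm1$). We then want a twist $C^\sigma$ over $\Fpt$ that is extremal. By \Cref{lem:effect-twist-on-minpol}, read in reverse, twisting an extremal curve by $\tau$ or $\rho$ produces exactly the Weil polynomials $p^4 f_\tau(x/\pm p) = (x^2+p^2)^2$ and $p^4 f_\rho(x/\pm p) = (x^2 \pm px + p^2)^2$. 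So I would argue that twisting by the inverse automorphism, which corresponds to the inverse cocycle via \Cref{lem:twists}, undoes this. Concretely, $\pi_{C^\sigma} = \sigma^{-1}\pi_C$ up to identification, and the Frobenius of $\J/\Fpt$ acts as $\pm p\cdot\sigma$ for the appropriate $\sigma$. The twist by $\sigma^{-1}$ then has Frobenius $\pm p$, so it is extremal, and \Cref{thm:superspeciality if twist} gives superspeciality.

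For the \emph{only if} direction, suppose $\J$ is of type $b\in\{-1,0,1\}$ and superspecial. By \Cref{thm:superspeciality if twist} there is an extremal twist $C'$ over $\Fpt$, and $C$ is itself a twist $C'^{\sigma}$ of $C'$ for some $\sigma \in \Aut(C') \cong \Aut(C)$. By \Cref{lem:effect-twist-on-minpol}, $\phi_{C}(x) = p^4 f_\sigma(x/\pm p)$, so $f_\sigma$ is forced to be $(x^2+1)^2$ when $b=0$ and $(x^2\pm x+1)^2$ when $b=\pm1$. Using the table of characteristic polynomials of automorphisms in \cite[Table 4]{howe2009}, and the descriptions of $\tau,\rho$ in \Cref{app:automorphisms}, the image of $\sigma$ in $\RA(C)$ must then be of order $2$ of type $\tau$ or of order $3$ of type $\rho$. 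Here $\sigma = -\sigma'$ is allowed, which is why both signs $b=\pm1$ arise. Finally, Bolza's classification \cite{bolza} of the seven possible reduced automorphism groups determines which groups contain such an element. This yields $\ZZ_2\times\ZZ_2$, $S_4$, $D_{2\times 6}$ for $\tau$ and $S_3$, $S_4$, $D_{2\times6}$ for $\rho$, matching the list preceding the theorem.

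The main obstacle is this last bookkeeping step: matching characteristic polynomials of automorphisms to specific elements of $\RA(C)$ for each group in Bolza's list. Note that $\ZZ_{10}$ and the other groups contain involutions that do \emph{not} give $(x^2+1)^2$, for example the involution in $\ZZ_2$ itself, which is already accounted for by the quadratic twist. I would handle this by reading off traces of automorphisms on the $2$-dimensional space of differentials from \cite[Table 4]{howe2009}, group by group. A secondary subtlety is the sign $\pm p$ in \Cref{lem:effect-twist-on-minpol}, that is, whether the extremal twist is minimal or maximal. This only affects which of $b=1$ or $b=-1$ occurs, not the automorphism-group condition.
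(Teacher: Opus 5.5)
Your treatment of $b=\pm2$ and of the ``only if'' direction is the paper's argument. You pass to an extremal twist via \Cref{thm:superspeciality if twist}, use \Cref{lem:effect-twist-on-minpol} to force $f_\sigma=(x^2+1)^2$ or $(x^2\pm x+1)^2$ (so $\sigma^2=\iota$, or $\sigma$ has order $3$ or $6$), and read off the admissible groups from Bolza's list. That part is fine, apart from a sign slip: for type $b$ the base change to $\Fpt$ has Weil polynomial $(x^2-bpx+p^2)^2$.

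The gap is in the ``if'' direction for $b\in\{-1,0,1\}$. Your sentence ``the Frobenius of $\J/\Fpt$ acts as $\pm p\cdot\sigma$ for the appropriate $\sigma$'' carries that whole direction, and it does not follow from $\tau$ or $\rho$ lying in $\RA(C)$. For \emph{every} surface of type $b$, superspecial or not, $u=\pi_{p^2}/p\in\End^0(\J)$ satisfies $u^2-bu+1=0$. By \Cref{thm:superspeciality if twist} and \Cref{lem:poonen en co}, $u$ is an honest automorphism exactly when $\J/\Fpt$ has an extremal twist, i.e.\ exactly when $\J$ is superspecial. So the claim is the conclusion restated. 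The existence of some other automorphism satisfying the same polynomial says nothing about whether $u$ is integral. Likewise, reading \Cref{lem:effect-twist-on-minpol} ``in reverse'' presupposes that $C$ is already a twist of an extremal curve. (The paper's own discussion also only carries out the twist computation in the direction extremal $\to$ non-extremal.)

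The gap is closed by the splitting recorded just before the theorem.
\begin{enumerate}
\item Each of $\ZZ_2\times\ZZ_2$, $S_3$, $S_4$, $D_{2\times6}$ contains a reduced involution that lifts to a non-hyperelliptic involution of $C$. Hence $\J$ is $(2,2)$-isogenous over $\Fpbar$ to some $E\times E'$.
\item If $\J$ is supersingular, so are $E$ and $E'$, so $E\times E'$ is superspecial.
\item The isogeny has degree $4$, prime to $p$, so it induces an isomorphism $\J[p]\cong(E\times E')[p]$. Thus $\J$ has $a$-number $2$ and is superspecial by Oort's theorem.
\end{enumerate}
This proves the ``if'' direction without any Frobenius bookkeeping. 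The type-dependence of the two lists comes entirely from the ``only if'' direction; combining both shows, for instance, that no type-$0$ Jacobian over $\Fp$ has $\RA(C)=S_3$.
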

\noindent For completeness, we perform the same computation and characterization of twisted superspecial Jacobians over $\Fpt$
in \Cref{app:twisted superspecials}.

\section{Probabilistic Supersingularity Testing}
\label{sec:testing}
In this section, we derive three main results.
First, in \Cref{sec:minmax}, we derive a simple test for minimality or maximality of abelian varieties of any dimension $g$.
Second, we use this result to derive a simple algorithm for
superspeciality testing of Jacobians over $\Fp$ and $\Fpt$ in \Cref{sec:superspecial-testing}.
Third, in \Cref{sec:probabilistic-algorithm}, we specialize to Jacobians over $\Fp$ and
derive a Monte Carlo algorithm for supersingularity testing of Jacobians $\J$ over $\Fp$ (\Cref{alg:supersingularity_test}).
This last algorithm requires two specific results about Jacobians over $\Fp$:
\begin{enumerate}
    \item \Cref{thm:supersingular-orders}: A supersingular Jacobian over $\Fp$ has order $n = p^2 - bp + 1$
    for $b \in \{-2, -1, 0, 1, 2\}$. Any abelian surface over $\Fp$ such that
    $|A(\Fpt)| = n^2$ is supersingular.
    \item \Cref{thm:probabilistic-testing}:
    For an abelian surface over $\Fp$, we can verify $|A(\Fpt)| = n^2$
    by verifying $[n]P = \zero$ for random points $P$.
    This has only a negligible chance of false positives
    using a generalization of Sutherland's argument using
    the width of the Hasse--Weil interval.
\end{enumerate}
We proved the first result in \Cref{sec:unique-orders}, which allows us to derive the second result in \Cref{sec:killing-arguments}.
Together, these give \Cref{alg:supersingularity_test}.

\subsection{Minimality and Maximality Testing for Varieties}\label{sec:minmax}
In this section, we give an algorithm to verify minimality and maximality of abelian varieties of any dimension $g$
over $\Fp$ or $\Fpt$, by generalising the algorithm for our key example (\Cref{alg:key-example}) using \Cref{lem:width-bound-argument}.
Similar arguments are used in later sections when we derive supersingularity tests for Jacobians over $\Fp$,
and superspeciality tests over $\Fp$ and $\Fpt$.

\begin{lemalg}\label{alg:minmax-test}
    Let $A$ be an abelian variety of dimension $g$ over $\Fq$ with $q = p$ or $p^2$.
    Let $P \rgets A(\Fpt)$.
    \begin{itemize}
        \item If $[p]P = \pm P$, return \textbf{\upshape true}.
        \item Otherwise, return \textbf{\upshape false}
    \end{itemize}
\end{lemalg}
\Cref{alg:minmax-test} checks whether $A$ is minimal or maximal,
and thus whether it is superspecial by \Cref{lem:poonen en co}.
The failure probability is again negligible, as the following result shows.

\begin{theorem}\label{thm:minmax-test}
    \Cref{alg:minmax-test} runs in $O(\log p)$ time and returns \textbf{\upshape true}
    for any minimal or maximal abelian variety over $\Fq$. For any non-extremal variety of dimension $g$, \Cref{alg:minmax-test}
    returns \textbf{\upshape true} with probability at most $O(p^{-1})$.
\end{theorem}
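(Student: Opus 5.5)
The plan is to treat correctness and soundness separately; soundness is reduced to bounding the index of a subgroup of $A(\Fpt)$. Throughout, $A$ is viewed over $\Fpt$ by base change when $q = p$, and $\pi$ denotes the $p^2$-Frobenius. For the running time, sampling $P \rgets A(\Fpt)$ is assumed uniform and efficient. One scalar multiplication by $p$ by double-and-add costs $O(\log p)$ group operations, and comparing with $\pm P$ is constant cost, so the algorithm runs in $O(\log p)$.

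\emph{Correctness on extremal varieties.} If $A/\Fpt$ is maximal, every $p^2$-Weil number satisfies $|1-\alpha_i| = p+1$ with $|\alpha_i| = p$, which forces $\alpha_i = -p$. Since $\pi$ is semisimple (as recalled in \Cref{rem:torsion-in-Fpt}), this gives $\pi = [-p]$ on $A$. Hence for any $P \in A(\Fpt)$ we have $P = \pi P = [-p]P$, i.e. $[p]P = -P$. Symmetrically, in the minimal case $\alpha_i = p$ and $\pi = [p]$, so $[p]P = P$. Thus the algorithm returns \textbf{true} with probability $1$.

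\emph{Soundness.} Let $H_+ = A(\Fpt)[p+1]$ and $H_- = A(\Fpt)[p-1]$. These are exactly the sets of $P$ with $[p]P = -P$, respectively $[p]P = P$. The acceptance probability is therefore at most $1/[A(\Fpt):H_+] + 1/[A(\Fpt):H_-]$, and it suffices to show that each index is $\Omega(p)$ when $A$ is not maximal (resp.\ not minimal). I would phrase this through the width-bound argument, \Cref{lem:width-bound-argument}. Note that $H_+ = \ker(\pi - 1) \cap \ker(\pi + p)$, since on $\ker(\pi-1)$ the map $\pi + p$ acts as $p+1$. Decompose $A$ up to isogeny into simple factors, and split the Weil numbers into those with $\alpha = -p$ and the rest. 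For a Weil number $\alpha \neq -p$, the common kernel of $1-\alpha$ and $p+\alpha$ is controlled by the ideal $(1-\alpha, p+\alpha)$ in $\ZZ[\alpha]$. This ideal contains $p+1$ and divides $p+\alpha$, whose absolute norm is small compared with that of $1-\alpha$ unless $\alpha$ is close to $-p$. Integrality of the Weil polynomial forces a non-real $\alpha$ to satisfy $|\alpha + p| \geq$ a quantity of order $\sqrt{p}$, hence each such pair loses a factor of order $p$ relative to $|1-\alpha|$. Combining over all Weil numbers, $|H_+| \leq |A(\Fpt)|/cp$ whenever at least one $\alpha_i \neq -p$, i.e.\ whenever $A$ is non-maximal; the minimal case is identical with $p-1$ and $\alpha = p$. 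An alternative route is to apply \Cref{lem:width-bound-argument} directly. If $|H_+|$ exceeds the width $O(4^g p^{2g-1})$, then $|A(\Fpt)|$ is the unique multiple of $|H_+|$ in the interval. One then has to show that this multiple differs from $|H_+|$ by a factor $\Omega(p)$ unless $H_+ = A[p+1]$, and in that case $|A(\Fpt)| = (p+1)^{2g}$, so $A$ is maximal.

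The main obstacle is this index bound, in particular excluding an index that is small but larger than $1$, say a bounded constant, which the width argument alone does not rule out. I would first attempt the eigenvalue/ideal computation above, using the constraint that $H_+ \subseteq \ker(\pi+p)$. The fallback is to argue with the lattice of divisors of $(p+1)^{2g}$ inside the Hasse--Weil interval, giving at most $2g-1$ "bad" factors of $p$.
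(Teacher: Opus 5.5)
Your running-time and completeness arguments are fine, and more explicit than the paper's, which does not spell out why extremality gives $\pi = [\mp p]$. The gap is in soundness. You leave the bound on $|H_+|$ open as the ``main obstacle'' and claim the width-bound argument cannot exclude a non-maximal $A$ with $[A(\Fpt):H_+]$ a small constant larger than $1$. It can, and this is the whole of the paper's proof.

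The point you are missing is that $H_+ \subseteq A[p+1] \cong (\ZZ/(p+1)\ZZ)^{2g}$, so $|H_+|$ divides $(p+1)^{2g}$. That number is the upper endpoint of the Hasse--Weil interval over $\Fpt$, so it is itself a multiple of $|H_+|$ lying in the interval. Hence if $|H_+|$ exceeds the width $W$, the unique multiple given by \Cref{lem:width-bound-argument} is $(p+1)^{2g}$, i.e.\ $A$ is maximal. There is nothing further to show about how that multiple compares to $|H_+|$.

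Contrapositively, if $A$ is not maximal then $|H_+| \le W \le 4^g p^{2g-1}$. The probability that $[p]P = -P$ is then at most $W/(p-1)^{2g} = O(p^{-1})$ for fixed $g$. A bounded index $c$ is therefore excluded automatically: it would give $|H_+| \ge (p-1)^{2g}/c > W$ for large $p$, which already forces maximality. The same argument with the lower endpoint $(p-1)^{2g}$, a multiple of $|H_-|$, handles $H_-$. Your union bound then finishes the proof.

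Your primary eigenvalue/ideal route would not close the gap as written, for three reasons.
\begin{itemize}
\item Passing to an isogeny decomposition preserves $|A(\Fpt)|$ but not $|A(\Fpt)[p+1]|$.
\item The comparison between $p+\alpha$ and $1-\alpha$ runs the wrong way. Writing $t = \alpha+\bar\alpha$, one has $|p+\alpha|^2 = p(2p+t)$ and $|1-\alpha|^2 = p^2+1-t$. So $|p+\alpha|$ is small relative to $|1-\alpha|$ only when $\alpha$ is near $-p$, and it is about twice as large when $\alpha$ is near $+p$. No factor of order $p$ is lost per Weil number.
\item The lower bound $|\alpha+p| \gtrsim \sqrt{p}$ holds in general only for the product over conjugates, not individually when $t$ is irrational.
\end{itemize}
None of this machinery is needed once the divisibility $|H_\pm| \mid (p\pm 1)^{2g}$ is combined with the width bound.
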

\begin{proof}
    This follows directly from \Cref{lem:width-bound-argument}. We detail the maximal case; the minimal
    case follows similarly.
    For the subgroup $H = A[p+1](\Fpt)$, we get that $|H|$ divides $(p+1)^{2g}$,
    and thus if $|H|$ is larger than $4gp^{2g-1}$, then $|A(\Fpt)| = (p+1)^{2g}$ and $A$ is maximal,
    and therefore superspecial.
    So, for any non-maximal $A$, the subgroup $H$ is smaller than $O(4gp^{2g-1})$. The probability of sampling a point $P \in A(\Fpt)$ such that $[p+1]P = \zero$ is equal to the probability of sampling a point in $H$. In the non-maximal case we therefore have that the probability is
    \[
        \frac{|H|}{|A(\Fpt)|} \leq \frac{4gp^{2g - 1}}{(p - 1)^{2g}} = O(p^{-1}).
    \]
\end{proof}

\subsection{Superspeciality Testing for Jacobians}\label{sec:superspecial-testing}
By
combining \Cref{thm:minmax-test} with \Cref{thm:superspeciality if twist},
we get the following simple test for superspeciality
for Jacobians of any dimension $g$ over both $\Fp$ and $\Fpt$, as long
as $g$ is small compared to $p$.

\begin{lemalg}\label{alg:superspecial-test}
    Let $\J$ be a Jacobian of dimension $g$ of a curve $C$ over $\Fq$ with $q = p$ or $p^2$. Let $\Aut(C)$ be the geometric automorphism group of $C$ (i.e., automorphisms defined over $\Fpbar$).
    For each $\sigma \in \Aut(C)$:
    \begin{itemize}
        \item Let $\J^\sigma$ be the Jacobian of the twist $C^\sigma$ of $C$ by $\sigma$ over $\Fpt$.
        \item Sample $P \rgets \J^\sigma(\Fpt)$.
        \item If $[p]P = \pm P$, return \textbf{\upshape true}.
    \end{itemize}
    If no $\J^\sigma$ passes the $[p]P = \pm P$ test, return \textbf{\upshape false}.
\end{lemalg}
\Cref{alg:superspecial-test} checks
that $\J$ or any of its twists is maximal or minimal, which shows superspeciality by \cite[Thm. 5.3]{max_abvars}.
Using \Cref{thm:minmax-test}, we show that the failure probability is again negligible.

\begin{theorem}\label{thm:superspecial-test}
    Let $p > g + 1$. Then
    \Cref{alg:superspecial-test} runs in $O(g \log p)$ time and returns \textbf{\upshape true}
    for any superspecial Jacobian. For any non-superspecial Jacobian of dimension $g$, \Cref{alg:superspecial-test}
    returns \textbf{\upshape true} with probability at most $O((g-1) \cdot p^{-1})$.
\end{theorem}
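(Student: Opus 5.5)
The plan is to reduce the statement to \Cref{thm:minmax-test}, applied once to each twist $\J^\sigma$ visited by \Cref{alg:superspecial-test}, and to control the total false-positive probability with a union bound.

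For completeness, suppose $\J$ is superspecial. By \Cref{thm:superspeciality if twist} (for $g=2$), and more generally by the curve criterion of \Cref{lem:superspecial note} together with \cite[Thm. 5.3]{max_abvars}, some twist $C^\sigma$ of $C$ over $\Fpt$ is minimal or maximal. By \Cref{lem:twists} every twist over $\Fpt$ arises from a cocycle. Over a finite field, $H^1$ is determined by the image of Frobenius, so every twist corresponds to some $\sigma \in \Aut(C)$, and the loop in \Cref{alg:superspecial-test} reaches it. For that $\sigma$, the Frobenius of $\J^\sigma$ over $\Fpt$ acts as $[\pm p]$ on all of $\J^\sigma$. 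Hence $[p]P = \pm P$ holds for every $P \in \J^\sigma(\Fpt)$, and the algorithm returns \textbf{true} deterministically.

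For soundness, suppose $\J$ is not superspecial. Then no twist $\J^\sigma$ over $\Fpt$ is extremal. Otherwise \cite[Thm. 5.3]{max_abvars} would make $\J^\sigma$, and hence $\J$ (which is isomorphic to $\J^\sigma$ over $\Fpbar$), superspecial. By \Cref{thm:minmax-test}, each iteration passes with probability $O(p^{-1})$, since the passing points form a subgroup of size at most the Hasse--Weil width. A union bound over the iterations bounds the total probability by the number of distinct twists tested times $O(p^{-1})$.

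The main obstacle is turning that count into the stated factor $(g-1)$ and the running time $O(g\log p)$. I would first identify $\sigma$ up to $\Fpt$-conjugacy, i.e.\ Frobenius-twisted conjugacy, since conjugate cocycles give isomorphic twists and these need not be retested. I would then use the hypothesis $p > g+1$, which I expect rules out wild automorphisms, to invoke a bound on the number of relevant classes linear in $g$. An alternative route runs through \Cref{lem:effect-twist-on-minpol}: a twist can only become extremal if $f_\sigma$ has a specific shape, and I would argue via the eigenvalues of $\sigma$ that only $O(g)$ possibilities remain. Here $\sigma$ has finite order and acts on the $2g$-dimensional Tate module, so its eigenvalues are roots of unity of bounded order. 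If these counting bounds cannot be made rigorous in full generality, I would state the result with $|\Aut(C)|$ or the number of twists in place of $(g-1)$. Each iteration costs one scalar multiplication by $p$, which is $O(\log p)$ operations, plus the constant overhead of writing down $C^\sigma$. The total is therefore $O(g\log p)$ once the number of tested twists is $O(g)$.
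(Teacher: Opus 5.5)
Your completeness and soundness arguments match the paper's proof. Every twist over $\Fpt$ comes from some $\sigma \in \Aut(C)$ by \Cref{lem:twists}. An extremal twist has Frobenius $[\pm p]$, so it passes deterministically. A non-superspecial $\J$ has no extremal twist, so each iteration passes with probability $O(p^{-1})$ by \Cref{thm:minmax-test}, and a union bound finishes.

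The gap is the step you flag as the main obstacle, and it is exactly where the paper uses its one extra ingredient. That ingredient is Hurwitz's bound $|\Aut(C)| \leq 84(g-1)$, which Roquette showed still holds in characteristic $p > g+1$ \cite{roquette}. The only exception is one curve, which is itself superspecial. This is what the hypothesis $p > g+1$ is for. It does not by itself exclude wild automorphisms: $x \mapsto x+1$ on $y^2 = x^p - x$, with $p = 2g+1$, is one.

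With this bound both claims are immediate. \Cref{alg:superspecial-test} runs at most $|\Aut(C)| = O(g)$ iterations, each costing $O(\log p)$. The number of twists in the union bound is at most $|\Aut(C)|$, as you already noted via the image of Frobenius. Together these give $O(g \log p)$ time and $O((g-1)p^{-1})$ failure probability.

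Your proposed substitutes do not close the gap. Passing to Frobenius-twisted conjugacy classes may shrink the union bound. But it does not bound the running time of the algorithm as written, which visits every $\sigma \in \Aut(C)$. You also give no argument that the number of classes is $O(g)$.

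The eigenvalue route via \Cref{lem:effect-twist-on-minpol} constrains which $f_\sigma$ can occur, not how many automorphisms there are. That lemma is also stated only for genus-$2$ curves that are already extremal. Your fallback of replacing $(g-1)$ by $|\Aut(C)|$ proves a strictly weaker statement. The missing idea is to invoke the Hurwitz bound in the form valid for $p > g+1$.
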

\begin{proof}
    By \Cref{thm:minmax-test}, the probability of passing the $[p]P = -P$ test
    for any non-maximal Jacobian
    is bounded by $O(p^{-1})$.
    By Hurwitz's automorphisms theorem, we get $|\Aut(C)| \leq 84(g - 1)$, as long as
    $p > g + 1$~\cite{roquette}\footnote{
    \textcite{roquette} shows this result for all curves except one, but the exceptional curve is itself a superspecial curve.}
    and we can bound the number of twists we have to visit by the size of $\Aut(C)$
    by \Cref{lem:twists}.
    Each test costs $O(\log p)$ group operations, giving a total cost of $O(g \log p)$. Furthermore, if the $[p]P = -P$
    test succeeds on any twist $\J^\sigma$,
    then $\J$ is superspecial and a union bound over the twists gives the stated probability $O((g-1) \cdot p^{-1})$.
    The analogous argument holds replacing maximal with minimal and $[p]P = -P$ with $[p]P = P$.
\end{proof}

\subsection{Probabilistic Order Determination over $\Fp$}\label{sec:killing-arguments}
We now generalize Sutherland's probabilistic argument to cases beyond minimal or maximal abelian surfaces. If a uniformly random point $P \in A(\Fq)$ on an abelian variety $A$ of dimension $g$ over $\Fq$ satisfies
$[n]P = \zero_A$ for some $n \in \NN$, what can we say about the order of $A(\Fq)$?
We answer this question for abelian surfaces over $\Fp$ in  \Cref{thm:probabilistic-testing}:
for the supersingular orders $n = p^2 - bp + 1$,
if $[n]P = \zero_A$ for a random $P \in A(\Fp)$ and $[n]Q = \zero_A$
for a random $Q \in A^t(\Fp)$, then it is extremely likely that $A$ is supersingular (at least for large $p$).
A special case is $b = - 2$, which only needs a single test $[n]P = \zero_A$
for a random $P \in A(\Fp)$ to verify supersingularity.
We derive these results case by case, mainly using \Cref{thm:supersingular-orders}.
We rely heavily on \Cref{cor:lam-in-terms-phi}.\footnote{
    As in \Cref{sec:unique-orders}, the arguments here use only random sampling and scalar
multiplication, so no polarization is assumed in this subsection.
}

\paragraph{The extremal cases $\mathbf{b = \pm 2}$.}
Interestingly, these differ for $b = -2$ and $b = 2$.
The former needs only 1 point, whereas the latter needs 2.
In both cases, we multiply by $p\pm 1$ instead of the supersingular order $(p\pm1)^2$.
\begin{lemma}\label{lem:prob-p+1}
    Let $A$ be an abelian surface over $\Fp$. Let $P$ be a random point on $A(\Fp)$.
    The probability that $[p+1]P = \zero$ is
    \begin{itemize}
        \item 1, if $A$ is supersingular of type $b = -2$,
        \item $O(p^{-1/2})$, otherwise.
    \end{itemize}
\end{lemma}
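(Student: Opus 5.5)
The plan is to handle the two cases of the statement separately; the second is a width-bound argument on the subgroup $H = A[p+1](\Fp)$.

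\emph{Supersingular of type $b=-2$.} By \Cref{thm: possible group structures of rational jacobians}, the exponent of $A(\Fp)$ divides $p+1$ in all three listed group structures. Hence $[p+1]P = \zero$ for every $P \in A(\Fp)$, and the probability is $1$.

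\emph{All other $A$.} The probability that $[p+1]P=\zero$ equals $|H|/|A(\Fp)|$, where $H = A[p+1](\Fp)$. I would bound $|H|$ from two directions.

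First, since $\gcd(p+1,p-1)=2$, \eqref{lem:weil-pairing} gives
\[
H \hookrightarrow (\ZZ/(p+1)\ZZ)^2 \times (\ZZ/2\ZZ)^2 ,
\]
so $|H|$ divides $4(p+1)^2$. The Hasse--Weil width for $g=2$, $q=p$ is at most $16p^{3/2}$.

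Second, suppose $|H| > 16p^{3/2}$. Then \Cref{lem:width-bound-argument} fixes $|A(\Fp)|$ as the unique multiple of $|H|$ in the Hasse--Weil interval. Since $|H| \mid 4(p+1)^2$ and $|H| > 16 p^{3/2}$, the cofactor $k = 4(p+1)^2/|H|$ is an integer smaller than about $p^{1/2}/4$. The key step is to show that the structure of $H$ then forces $|A(\Fp)| = (p+1)^2$ and $\phi_A \equiv (x+1)^4$ to high order. Concretely, I would use \Cref{lem:probability}: $H$ has rank $4$ up to small index, so $n_3 n_2 n_1$ shares a factor of size roughly $(p+1)^2/k^{O(1)}$. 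That factor must divide $\lambda_1 = -a(p+1)-2(a-b+2)$ and $\lambda_2 = 3a-b+6$. Since $|\lambda_2| = O(p)$, a large $n_2 n_3$ forces $\lambda_2 = 0$ or forces a congruence condition. Combined with $\lambda_1$ and the Weil bounds, I expect this to give $a=0$ and $b=-2p$, i.e.\ $A$ supersingular of type $-2$, a contradiction. So in every other case $|H| \le 16p^{3/2}$.

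With that bound, the probability is at most
\[
\frac{16p^{3/2}}{(\sqrt p-1)^4} = O(p^{-1/2}).
\]

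The main obstacle is the second step. A crude width argument alone only yields $|A(\Fp)| = (p+1)^2$, which ordinary surfaces can also satisfy. I therefore need to exploit that $H$ is $(p+1)$-torsion, hence of large rank, through the divisibility conditions of \Cref{lem:probability}. If that proves too delicate, the fallback is a cruder argument. A large $H$ with at least three cyclic factors of size about $p$ forces $n_1 n_2 n_3 \gtrsim p^{3/2}$ to divide $\lambda_1$. But $|\lambda_1| = O(p^{3/2})$, which pins down $a$ and $b$ directly, and the target is again the supersingular type $b=-2$ polynomial $(x^2+p)^2$.
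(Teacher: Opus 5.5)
Your type $b=-2$ case and the final probability estimate are fine. The central step, that a large $H = A[p+1](\Fp)$ forces $A$ to be supersingular of type $-2$, is not established, and the mechanism you propose for it cannot work. You claim that $H$ ``has rank $4$ up to small index'' and that $n_1n_2n_3$ acquires a factor of size about $(p+1)^2/k^{O(1)}$. This contradicts the embedding $H \hookrightarrow (\ZZ/(p+1)\ZZ)^2\times(\ZZ/2\ZZ)^2$ you derived one line earlier. Write $H \cong \ZZ_{h_0}\times\ZZ_{h_1}\times\ZZ_{h_2}\times\ZZ_{h_3}$. Then $h_2h_3 \le 4$, so \Cref{lem:probability} extracts information from $H$ only through $h_1$. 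But $h_1$ is only of size about $|H|/(p+1)$, i.e.\ around $\sqrt p$ at the threshold. In particular, the conditions from $\lambda_2,\lambda_3$ are vacuous: for the target $(x^2+p)^2$ one has $\lambda_2 = 2p+6 \neq 0$, and $A(\Fp)$ may be $(\ZZ/(p+1)\ZZ)^2$ with $n_2=n_3=1$. Your fallback with ``three cyclic factors of size about $p$'' describes a group that cannot occur. Also, $h_1 \mid \lambda_1$ alone does not pin down the coefficients, since $|\lambda_1|$ is of order $p^{3/2}$ while $h_1$ is of order $\sqrt p$.

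The missing idea is to combine three divisibilities: $h_1 \mid p+1$, $h_1 \mid n_1 \mid \lambda_0$ and $h_1 \mid n_1 \mid \lambda_1$. With $\phi_A(x) = x^4 + a_px^3 - b_px^2 + a_p p x + p^2$, write $\lambda_0 = (p+1)(p-1+a_p) + 2 - b_p$ and $\lambda_1 = -a_p(p+1) - 2a_p - 2(2-b_p)$. These give first $h_1 \mid 2-b_p$ and then $h_1 \mid 2a_p$. If $h_0h_1 > W = 8\sqrt p(p+1)$, then $h_1 > W/(p+1) = 8\sqrt p \ge 2|a_p|$ by \eqref{eq:bound-a}, so $a_p = 0$. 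Only now does the width bound, applied to $\ZZ_{h_0}\times\ZZ_{h_1}$ (whose order divides $(p+1)^2$), give $|A(\Fp)| = (p+1)^2$. Hence $b_p = -2p$ and $\phi_A = (x^2+p)^2$; this is exactly how the paper argues.

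Your threshold $|H| > 16p^{3/2}$ is also too weak for this step. It only yields $h_1 \gtrsim 4\sqrt p$, which does not exceed $2|a_p| \le 8\sqrt p$. Use instead $|H| > 4W$, equivalently split on whether $h_0h_1 > W$. This only changes the constant in the final $O(p^{-1/2})$ bound.
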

\begin{proof}
    Let $m = p+1$.
    The probability to sample a point $P$ with $[m]P = \zero$ equals
    \[
        \mathcal{P} := \frac{|A(\Fp)[m]|}{|A(\Fp)|}.
    \]
    Let $h_0, \ldots, h_3 \in \NN$ be such that $A(\Fp)[m] \cong \Z_{h_0} \times \Z_{h_1} \times \Z_{h_2} \times \Z_{h_3}$ and $h_{i+1} \mid h_i$.
    From \Cref{lem:weil-pairing},
    $h_3 \mid h_2 \mid 2$,  and we have $h_1 \mid h_0 \mid p+1$ and $h_1 \mid \lambda_0$, $h_1 \mid \lambda_1$.
    Since  $\lambda_0 = (p+1)(p-1+a_p) + 2-b_p$ and $ \lambda_1 = -a_p(p+1) -2a_p -2(2-b_p)$, this implies $h_1 \mid 2a_p$.

    Let $W = 8\sqrt{p}(p+1)$ denote the width of the
    Hasse--Weil interval over $\mathbb{F}_p$.
    If $h_0 h_1 \leq W$, then $|A(\mathbb{F}_p)[m]| = h_0h_1h_2h_3 \leq 4W$, and therefore
    \[
    \mathcal{P} \leq \frac{4W}{(\sqrt{p}-1)^4} = \frac{32\sqrt{p}(p+1)}{(\sqrt{p}-1)^4} = O\big(p^{-1/2}\big).
    \]
    Suppose instead that $h_0 h_1 > W$.
    If $a_p \neq 0$, then $h_1 \leq 2|a_p| \leq 8\sqrt{p}$ by~\Cref{eq:bound-a}, so that
    $h_0 h_1 \leq 8\sqrt{p}(p+1) = W$, a contradiction. Hence, $a_p = 0$.
    Now $A(\mathbb{F}_p)$ contains a subgroup isomorphic to $\mathbb{Z}_{h_0} \times \mathbb{Z}_{h_1}$,
    whose order exceeds $W$ and divides $(p+1)^2$. By \Cref{lem:width-bound-argument}, $|A(\mathbb{F}_p)| = (p+1)^2$ and so $b_p = -2p$. Therefore, $A$ is supersingular of type $b = -2$ and $\mathcal{P} = 1$.
\end{proof}

\begin{lemma}\label{lem:prob-p-1}
    Let $A$ be an abelian surface over $\Fp$.
    The probability that
    $[p-1]P = \zero$ for a random $P \in A(\Fp)$
    and $[p-1]Q = \zero$ for a random $Q \in A^t(\Fp)$,
    is
    \begin{itemize}
        \item 1, if $A$ is supersingular of type $b = 2$,
        \item $O(p^{-1/2})$, otherwise.
    \end{itemize}
\end{lemma}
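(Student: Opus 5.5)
The proof follows the template of \Cref{lem:prob-p+1}, applied to $A$ and to its quadratic twist $A^t$ at the same time. Set $m = p-1$. The two samples are independent, so the joint success probability is
\[
\mathcal{P} = \frac{|A(\Fp)[m]|}{|A(\Fp)|}\cdot\frac{|A^t(\Fp)[m]|}{|A^t(\Fp)|}.
\]
If $A$ is supersingular of type $b=2$, then \Cref{thm: possible group structures of rational jacobians} shows that the exponent of $A(\Fp)$ divides $p-1$. By \Cref{rem:twist-iso}, $A^t(\Fp)\cong A(\Fp)$, so both factors equal $1$ and $\mathcal{P}=1$.

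For the converse, write $A(\Fp)[m]\cong \Z_{h_0}\times\Z_{h_1}\times\Z_{h_2}\times\Z_{h_3}$ with $h_{i+1}\mid h_i$. Since $\gcd(m,p-1)=m$, \eqref{lem:weil-pairing} does not force $h_2,h_3$ to be small; this is exactly why a single point does not suffice here, in contrast with $b=-2$. Instead I will use \Cref{lem:probability} and \Cref{cor:lam-in-terms-phi} with $q=p$. Write $\lambda_0 = (p-1)(p+1+a_p)+(2+2a_p-b_p)$, which reduces to $\lambda_0\equiv 2+2a_p-b_p \pmod{p-1}$. Reduce $\lambda_1,\lambda_2,\lambda_3$ modulo $p-1$ in the same way. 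The goal is to show that $h_1h_2h_3$ divides a small combination $\delta_A$ of $a_p$ and $b_p-2p$, unless $a_p$ and $b_p-2p$ are both $0$.

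The twist has $\phi_{A^t}(x)=\phi_A(-x)$. Its trace is $-a_p$ and its $b_p$ is unchanged, so it yields the analogous $\delta_{A^t}$ with $a_p\mapsto -a_p$. Multiplying the two conditions cancels the sign-dependent parts. The case split is as follows.
\begin{itemize}
\item If $|A(\Fp)[m]|\le cW$ or $|A^t(\Fp)[m]|\le cW$, where $W=8\sqrt p(p+1)$ is the Hasse--Weil width and $c$ is an absolute constant, then the corresponding factor is $O(p^{-1/2})$. Since the other factor is at most $1$, we get $\mathcal{P}=O(p^{-1/2})$.
\item Otherwise both groups are large. I will deduce $a_p=0$: if $a_p\neq0$, the divisibility constraints bound $h_1$ by $O(|a_p|)=O(\sqrt p)$ on at least one of $A$ or $A^t$, which caps that group's $m$-torsion at $O(p\cdot\sqrt p)=O(W)$, a contradiction.
\item With $a_p=0$ in hand, the $m$-torsion subgroup of $A(\Fp)$ has order larger than $W$ and dividing $(p-1)^2$. \Cref{lem:width-bound-argument} then gives $|A(\Fp)|=(p-1)^2$, so $\phi_A(1)=p^2+1-b_p=(p-1)^2$, i.e.\ $b_p=2p$. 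By \Cref{lem:supersingular_orders}, $A$ is supersingular of type $b=2$.
\end{itemize}

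The main obstacle is the second case: showing that $a_p\neq 0$ forces at least one of $A(\Fp)[m]$, $A^t(\Fp)[m]$ to be small. Modulo $p-1$ one has $p\equiv1$, and $\lambda_1\equiv -4a_p+2b_p-4$, $\lambda_2 = 3a_p-b_p+6$, $\lambda_3=-(a_p+4)$. So I would bound the products $h_1h_2h_3$ and $h_2h_3$ using $\gcd(\lambda_1,p-1)$ and $\gcd(\lambda_2,p-1)$ for both signs of $a_p$. Subtracting the $\lambda_1$-conditions for $A$ and $A^t$ gives $h_1h_2h_3\mid 8a_p$ (up to gcd with the twist's invariants). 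I expect the cleanest route is to take the gcd of the $\lambda_1$ and $\lambda_2$ conditions across the two twists, obtaining a bound like $h_1 \mid 8a_p$ on whichever side has the larger $h_1$. This yields $h_1=O(\sqrt p)$ and closes the argument as in \Cref{lem:prob-p+1}.
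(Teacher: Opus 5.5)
Your supersingular direction and your first case are fine. The second bullet, which you rightly identify as the crux, does not go through with per-variety divisibility constraints. The remark after this lemma in the paper says exactly this: the argument of \Cref{lem:prob-p+1} does not transfer to $p-1$. Two things break.

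First, modulo $p-1$ one has $\lambda_1\equiv-2\lambda_0$. So \Cref{lem:probability} gives only $h_1\mid\gcd(p-1,\,2+2a_p-b_p)$ for $A$ and $h_1^t\mid\gcd(p-1,\,2-2a_p-b_p)$ for $A^t$. Subtracting controls only the common part, $\gcd(h_1,h_1^t)\mid 4a_p$. It says nothing about the larger of the two, so \emph{$h_1\mid 8a_p$ on whichever side has the larger $h_1$} does not follow; these constraints only give $\min(h_1,h_1^t)=O(p^{3/4})$.

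Second, even $h_1=O(\sqrt p)$ would not cap the $m$-torsion at $O(W)$. As you note yourself, for $m=p-1$ the Weil pairing does not bound $h_2,h_3$. The remaining conditions give only $h_2,h_3\mid a_p+4$ (resp.\ $a_p-4$ on the twist). So $h_2h_3$ can be of size $p$, and $h_0h_1h_2h_3$ of size $p^{5/2}$.

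The paper's $p=73$ example shows this. There $\J_C(\Fp)\cong\ZZ/12\times\ZZ/12\times\ZZ/6\times\ZZ/6$ is entirely killed by $p-1$ and has order $(p-1)^2>W$, although $a_p\neq0$. Its $h_1=12$ is below $8\sqrt p$; the size comes from $h_2h_3=36$, which nothing in your argument controls. Separate constraints on $\phi_A(x)$ and $\phi_A(-x)$ cannot see how $A(\Fp)$ and $A^t(\Fp)$ interact.

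The missing idea is to put both groups inside one group over $\Fpt$ and apply the width bound there. Let $G=A(\Fp)[p-1]$ and $H=A^t(\Fp)[p-1]$. The twisting isomorphism $\rho$ sends $H$ into the $(-1)$-eigenspace of $\pi$ on $A[p-1]$. Hence $S=G+\rho(H)\subseteq A(\Fpt)$, and $|S|$ divides $(p-1)^4$. Also $G\cap\rho(H)\subseteq\ker(\pi-1)\cap\ker(\pi+1)\subseteq A[2]$, so $|G|\,|H|\le16|S|$.

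If $|S|>8(p^3+p)$, the width of the Hasse--Weil interval over $\Fpt$, then \Cref{lem:width-bound-argument} gives $|A(\Fpt)|=(p-1)^4$. So every $\alpha_i^2=p$, and since $A$ is defined over $\Fp$ its Weil polynomial is $(x^2-p)^2$, i.e.\ type $b=2$. Otherwise $|G|\,|H|=O(p^3)$, and the joint probability $|G|\,|H|/(|A(\Fp)|\,|A^t(\Fp)|)$ is $O(p^{-1})$.

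This never needs $a_p=0$, so your third bullet becomes unnecessary. As written it would also need repair: $A(\Fp)[m]$ has order dividing $(p-1)^4$, not $(p-1)^2$. You would first have to show $h_2h_3\le16$ when $a_p=0$ (which follows from $h_2\mid a_p+4$), and only then apply the width bound to $\ZZ_{h_0}\times\ZZ_{h_1}$.
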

\begin{proof}
    Let $G = A[p-1](\Fp)$ and $H = A^t[p-1](\Fp)$ with twisting isomorphism $\rho: A^t \to A$.
    Then $S = \langle G, H^\rho \rangle$
    is a subgroup of $A(\Fpt)$ whose order divides $\#A[p-1] = (p-1)^4$.
    By the width-bound argument, if $|S| \geq 8 (p^3 + p)$, then $|A(\Fpt)| = (p-1)^4$,
    which implies $A$ is minimal, and hence superspecial by \Cref{lem:poonen en co}.
    In particular, it is of type $b = 2$
    as this is the only $\Fp$-type to lift to a minimal abelian surface over $\Fpt$.
    Thus, in the non-supersingular case we have $|S| \leq 8 (p^3 + p)$.
    Since $G$ and $H$ share only the 2-torsion, of size at most 16, and $|S| = |G| \cdot |H|/|G \cap H|$, then
    $|G| \cdot |H| \leq 16 \cdot |S| \leq 2^7 \cdot p^3 + 2^7 \cdot p$, so at
    least one of $|G|$ or $|H|$ is smaller than $2^{7/2} \cdot p^{3/2} + \epsilon$.
\end{proof}

\begin{remark}
    We stress that the situation for $n = p-1$ is significantly
    different from the situation for $n = p+1$. Indeed, there exist ordinary
    abelian surfaces $A/\Fp$ such that $[p-1]A(\Fp) = \zero$.
    An example is given by the Jacobian $\J_C$
    over $\Fp$, where $p = 73$, for the hyperelliptic curve
    \[
        C : y^2 = x^6 + 68 x^5 + 41 x^4 + 31 x^3 + 17 x^2 + 11 x + 51,
    \]
    whose rational group $\J_C(\Fp)$ is isomorphic to
        \[
        \J_C(\Fp) \cong \ZZ/12 \times \ZZ/12 \times \ZZ/6 \times \ZZ/6.
    \]
        Crucially, the proof of \Cref{lem:prob-p+1} uses the shape of the Weil bounds
        (\Cref{eq:bound-a})
        to make the probability for non-zero $a$ values negligible; similar arguments cannot be repeated for $p - 1$.
\end{remark}

\paragraph{The cyclic cases $\mathbf{b = 0, \pm 1}$.}
All three cases are very similar. We show this for $n = p^2 + p + 1$, the other cases follow mutatis mutandis.

\begin{lemma}\label{lem:prob-p2-p-1}
    Let $A$ be an abelian surface over $\Fp$ and
    let $n = p^2 - bp + 1$ with $b \in \{-1, 0, 1\}$.
    The probability that
    $[n]P = \zero$ for a random $P \in A(\Fp)$
    and $[n]Q = \zero$ for a random $Q \in A^t(\Fp)$,
    is
    \begin{itemize}
        \item 1, if $A$ is supersingular of type $b =0, \pm 1$,
        \item $O(p^{-1/2})$, otherwise.
    \end{itemize}
\end{lemma}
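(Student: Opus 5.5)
The proof mirrors that of \Cref{lem:prob-p-1}, but uses \Cref{thm:supersingular-orders} in place of \Cref{lem:poonen en co}. Fix $b \in \{-1,0,1\}$ and $n = p^2 - bp + 1$. Let $\rho\colon A^t \to A$ be the twisting isomorphism, and set
\[
G = A[n](\Fp), \qquad H = A^t[n](\Fp).
\]
Then $H^\rho$ is exactly the set of $n$-torsion points $R \in A(\Fpt)$ with $\pi R = -R$. Hence $S = \langle G, H^\rho\rangle$ is a subgroup of $A(\Fpt)$ whose order divides $n^2$. Indeed, $S \subseteq \ker(\pi^2 - 1) = A(\Fpt)$, and $(\pi+1)$ sends $S$ into $G$ with kernel $H^\rho$. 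This gives $|S| \le |G|\,|H|$, and since $|G|, |H| \mid n$ by cyclicity considerations, $|S|$ divides $n^2$. (More carefully: $|S| = |G|\,|H| / |G\cap H|$, and $G \cap H^\rho \subseteq A[2](\Fp)$, a group of order at most $16$.)

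The completeness direction is immediate. If $A$ is supersingular of type $b$, then \Cref{thm: possible group structures of rational jacobians} gives $A(\Fp) \cong \ZZ/n\ZZ$, and by \Cref{rem:twist-iso} also $A^t(\Fp) \cong \ZZ/n\ZZ$. So every $P$ and every $Q$ is killed by $n$, and the probability is $1$.

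For soundness, I apply the width-bound argument (\Cref{lem:width-bound-argument}) over $\Fpt$. There the width is at most $16p^3$, up to lower-order terms. Suppose $|S| > 16p^3 + O(p^2)$. Since $|S|$ divides $n^2$, and $n^2 = (p^2 - bp + 1)^2$ lies in the Hasse--Weil interval over $\Fpt$ for $g = 2$, we conclude $|A(\Fpt)| = n^2$. \Cref{thm:supersingular-orders} (using $A$ over $\Fp$, which matters for the case $b = 0$) then shows $A$ is supersingular. Note that $A$ need not be of type $b$ itself; the statement only requires supersingular in the "1" case. I would double-check that $n^2$ does not coincide with another supersingular order over $\Fpt$ for a different $b$; this is harmless either way.

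Otherwise $|S| \le 16p^3 + O(p^2)$, and so $|G|\,|H| \le 16|S| = O(p^3)$. Hence $\min(|G|, |H|) = O(p^{3/2})$. Both $|A(\Fp)|$ and $|A^t(\Fp)|$ are at least $(\sqrt p - 1)^4 = \Omega(p^2)$. The success probability is $\frac{|G|}{|A(\Fp)|}\cdot\frac{|H|}{|A^t(\Fp)|}$, which is at most the smaller of the two factors. This is $O(p^{3/2}/p^2) = O(p^{-1/2})$.

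The main obstacle is justifying that $|S|$ divides $n^2$, which is what is needed to invoke the width bound with the correct target. Writing $S$ as the image of $G \times H^\rho$, its order divides $|G|\,|H|$, and both of these divide $n$, since each is a subgroup of the $n$-torsion of a group of exponent dividing $n$. More precisely, I must show that $G$ and $H$ each have order dividing $n$, rather than merely exponent dividing $n$. I would get this from \Cref{eq:lem:weil-pairing}, combined with \Cref{lem:probability}, via $n_1 \mid \lambda_1$ and $\gcd(\lambda_0, \lambda_1)$. If that fails, it suffices to note that $|S|$ divides $n^4$ and to use instead that the unique multiple of $|S|$ lies in the interval, with a slightly weaker but still sufficient width comparison. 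This is the step I would verify first.
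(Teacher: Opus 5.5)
Your overall architecture matches the paper's: the group $S=\langle G,H^\rho\rangle$, the width bound over $\Fpt$, \Cref{thm:supersingular-orders}, and the estimate $|G|\,|H|\le 16|S|$. However, the step you flag as the main obstacle is a genuine gap, and neither of your proposed fixes closes it.

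The claim $|G|,|H|\mid n$ confuses exponent with order. A subgroup of the $n$-torsion has exponent dividing $n$, not order dividing $n$. Cyclicity of $A(\Fp)$ is only available once $A$ is known to be supersingular, which is what you are proving. Applying \eqref{lem:weil-pairing} over $\Fp$ only gives $G\hookrightarrow(\ZZ/n\ZZ)^2\times(\ZZ/d'\ZZ)^2$, so $\ell$-rank $2$ is allowed for primes $\ell\mid n$. The divisibilities of \Cref{lem:probability} reduce to congruences on $(a_p,b_p)$ modulo $\ell$, which nothing excludes when $\ell$ is small (e.g.\ $\ell=5\mid p^2+1$).

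The fallback $|S|\mid n^4$ does not work either. \Cref{lem:width-bound-argument} only says that $|A(\Fpt)|$ is the unique multiple of $|S|$ in the Hasse--Weil interval. That multiple is $n^2$ only if $|S|\mid n^2$; otherwise you cannot feed $|A(\Fpt)|=n^2$ into \Cref{thm:supersingular-orders}.

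The missing idea is to bound $S$ jointly rather than bounding $G$ and $H$ separately. Since $S\subseteq A[n](\Fpt)$, applying \eqref{lem:weil-pairing} over $\Fpt$ gives $A[n](\Fpt)\hookrightarrow(\ZZ/n\ZZ)^2\times(\ZZ/d\ZZ)^2$ with $d=\gcd(n,p^2-1)\in\{1,2,3\}$, so $|S|\mid d^2n^2$. For $\ell\nmid d$ this forces the $\ell$-ranks of $G$ and $H$ to sum to at most $2$, which is why the joint statement holds even though nothing forces the individual one.

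If $d=1$ you are done. If $d\in\{2,3\}$, pass to the prime-to-$d$ part $S'$ of $S$, whose order divides $n^2$. Since $v_2(p^2+1)=1$ and $v_3(p^2\mp p+1)\le 1$, you get $[S:S']\le d^4\le 81$. Your dichotomy and probability estimate then go through with $S'$ in place of $S$, at the cost of a constant factor.

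Finally, your remark that $A$ ``need not be of type $b$'' is not harmless. The ``otherwise'' case of the statement includes supersingular surfaces of other types, so you do need the type. \Cref{lem:order-p2-p-1} and \Cref{lemma:order-p2-1} supply it: they force type exactly $b$ from $|A(\Fpt)|=n^2$ with $A$ defined over $\Fp$.
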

\begin{proof}
    Let $H = A[n](\Fp)$ and $G = A^t[n](\Fp)$. These groups do not share points
    beyond the 2-torsion. We again look at $S = \langle H, G^\rho \rangle$,
    where $\rho$ is the twisting isomorphism.
    Let $\gcd(n, p^2 -1) = d \in \{1, 2, 3\}$. By \Cref{lem:weil-pairing},
    beyond the 2-torsion or 3-torsion, the rank of $A[n](\Fpt)$ is at most 2, so $|S|$ divides $d^2 n^2$. Suppose first that $d = 1$ so that $|S|$ divides $n^2$.
    By the width-bound argument, $|S| \geq 8(p^3 + p)$ implies $|A(\Fpt)| = n^2$, hence $A$ is
    supersingular by \Cref{thm:supersingular-orders}. If $d = 2$ or $3$, then we replace $S$ by its prime-to-$d$ part, which has order dividing $n^2$, and the same argument applies (with the probability bound only losing a constant factor of at most $81$).
    We conclude that either $|S| \geq 8(p^3 + p)$ hence $A$ is supersingular,
    or $|S| \leq 8(p^3 + p)$ and so the $[n]P = \zero$ test succeeds on both $A$
    and $A^t$ with probability $O(p^{-1/2})$, similarly as in the proof of~\Cref{lem:prob-p-1}.
\end{proof}

\paragraph{General Theorem.}
We summarize the results in this section in the following theorem,
which follows directly from \Cref{lem:prob-p+1,lem:prob-p-1,lem:prob-p2-p-1}.

\begin{theorem}\label{thm:probabilistic-testing}
    Let $n = p^2 - bp + 1$ for $b \in \{ \pm 1, 0 \}$, and $n = p \pm 1$ for $b = \mp 2$. Let $A$ be an abelian surface over $\Fp$, $P$ be a random point of $A(\Fp)$ and $Q$ be a random point of $A^t(\Fp)$.
    Then,
    \begin{itemize}
        \item if $b = -2$, the probability that $[n]P = \zero$ is 1 if $A$ is supersingular, and $O(p^{-{1/2}})$ otherwise.
        \item if $b \in \{-1, 0, 1, 2\}$,
        the probability that $[n]P = \zero$ \emph{and} $[n]Q = \zero$
        is 1 if $A$ is supersingular, and $O(p^{-{1/2}})$ otherwise.
    \end{itemize}
\end{theorem}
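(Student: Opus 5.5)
The plan is to assemble \Cref{thm:probabilistic-testing} case by case from \Cref{lem:prob-p+1,lem:prob-p-1,lem:prob-p2-p-1}, after matching the parameter $n$ in the theorem to the multiplier used in each lemma. For $b = -2$ the theorem takes $n = p+1$, which is the setting of \Cref{lem:prob-p+1}. For $b = 2$ it takes $n = p-1$, as in \Cref{lem:prob-p-1}. For $b \in \{-1,0,1\}$ it takes $n = p^2 - bp + 1$, as in \Cref{lem:prob-p2-p-1}.

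\emph{Completeness.} The ``probability $1$'' statements will be read as referring to supersingular $A$ of the matching type $b$; this is what the lemmas give, and it is the intended meaning in the test. For such $A$, I would check the claim directly from \Cref{thm: possible group structures of rational jacobians}. When $b = \pm 2$, every listed group structure has exponent dividing $p \mp 1$. When $b \in \{-1,0,1\}$, the group $A(\Fp)$ is cyclic of order $n$. By \Cref{rem:twist-iso} we also have $A^t(\Fp) \cong A(\Fp)$. Hence $[n]$ kills both $A(\Fp)$ and $A^t(\Fp)$, and the test succeeds surely.

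\emph{Soundness.} Suppose $A$ is not supersingular of type $b$. The lemmas give that the acceptance probability is $O(p^{-1/2})$; for the two-point tests this probability is the product $\frac{|A[n](\Fp)|}{|A(\Fp)|}\cdot\frac{|A^t[n](\Fp)|}{|A^t(\Fp)|}$. The one point to spell out is the final step of \Cref{lem:prob-p-1,lem:prob-p2-p-1}. There, $|G|\,|H| \le 16\,|S| = O(p^3)$ while $|A(\Fp)|\,|A^t(\Fp)| \ge (\sqrt p - 1)^8 = \Omega(p^4)$. The product of the two individual probabilities is therefore $O(p^{-1})$, which is even stronger than the claimed $O(p^{-1/2})$. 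For $b=-2$ the bound comes straight from \Cref{lem:prob-p+1}.

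\emph{Main obstacle.} The only point needing care is the edge case in which $A$ is supersingular of a different type $b'$ but still passes the test for $b$. I would argue that this cannot happen with non-negligible probability. A passing $A$ with a large subgroup $S$ is forced, via \Cref{thm:supersingular-orders}, to satisfy $|A(\Fpt)| = n^2$, and hence to have type exactly $b$. So any other $A$, supersingular or not, lands in the $O(p^{-1/2})$ branch.
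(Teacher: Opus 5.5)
Your proposal is correct and takes the paper's route. The paper proves the theorem by citing \Cref{lem:prob-p+1,lem:prob-p-1,lem:prob-p2-p-1}, exactly as you do, with the same matching of $n$ to each lemma and the same implicit reading of ``supersingular'' as ``supersingular of type $b$''. Your additions are valid refinements rather than a different argument: the completeness check via \Cref{thm: possible group structures of rational jacobians} and \Cref{rem:twist-iso}, and the remark that independence of $P,Q$ together with $|G|\,|H| \le 16|S| = O(p^3)$ gives a joint false-positive probability of $O(p^{-1})$ for the two-point tests.
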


\subsection{Supersingularity Testing for Abelian Surfaces over $\Fp$}\label{sec:probabilistic-algorithm}
Based on \Cref{thm:probabilistic-testing},
we derive a simple algorithm to verify supersingularity for abelian surfaces over $\Fp$
in \IsSupersingular.

\begin{algorithm}[ht!]
	\caption{\DEFIsSupersingular}
    \label{alg:supersingularity_test}
    \begin{algorithmic}[1]
        \REQUIRE{An abelian surface $A$ defined over $\Fp$.}
        \ENSURE{A boolean indicating whether $A$ is supersingular. If true, we also return $|A(\Fp)|$.}
    \end{algorithmic}
    \begin{minipage}[t]{0.48\linewidth}
	\begin{algorithmic}[1]
        \STATE $P \picks A(\Fp) \setminus (A[2] \cup A[3])$
        \STATE $Q \picks A^t(\Fp) \setminus (A^t[2] \cup A^t[3])$
        \STATE $R \gets [p]P$
        \IF{$R = -P$}
            \RETURN {\tt true}, $(p + 1)^2$
        \ENDIF
        \IF{$R = P$}
            \STATE $S \gets [p]Q$
            \IF{$S = Q$}
            \RETURN {\tt true}, $(p - 1)^2$
            \ENDIF
        \ENDIF
    \end{algorithmic}
\end{minipage}%
\hfill%
\begin{minipage}[t]{0.48\linewidth}
    \begin{algorithmic}[1]
        \setcounter{ALC@line}{9}
        \STATE $T = [p]R$
        \IF{$T = -P$}
            \STATE $n \gets p^2 + 1$
        \ELSIF{$T + P = \pm R$}
            \STATE $n \gets p^2 \mp p + 1$
        \ELSE
            \RETURN {\tt false}
        \ENDIF
        \IF{$[n]Q = \zero$}
            \RETURN {\tt true}, $n$
        \ENDIF
        \RETURN {\tt false}
    \end{algorithmic}
    \end{minipage}
\end{algorithm}

\begin{theorem}\label{thm:main-theorem-probabilistic}
    \Cref{alg:supersingularity_test} runs in $O(\log p)$ time and returns \textbf{\upshape true} for any supersingular abelian surface.
    For any non-supersingular abelian surface, \Cref{alg:supersingularity_test} returns \textbf{\upshape true} with probability $O(p^{-1/2})$.
   \end{theorem}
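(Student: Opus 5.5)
The proof has three parts: the runtime count, completeness (every supersingular $A$ returns \textbf{true}), and soundness (a non-supersingular $A$ returns \textbf{true} with probability $O(p^{-1/2})$). Soundness will be obtained by reducing each accepting branch to \Cref{thm:probabilistic-testing}.

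\emph{Runtime.} Sampling $P$ and $Q$ outside the $2$- and $3$-torsion takes an expected constant number of trials, since these torsion subgroups have size at most $6^4$ while $|A(\Fp)| \approx p^2$. After that, the algorithm performs at most three scalar multiplications by $p$ and one by $n = O(p^2)$, plus $O(1)$ additions and comparisons. This is $O(\log p)$ group operations.

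\emph{Completeness.} Let $A$ be supersingular of type $b$. By \Cref{lem:phi splits mod n}, the Frobenius $\pi$ satisfies $\pi^4 - bp\pi^2 + p^2 = 0$, and the Verschiebung relation gives $[p] = \pi\hat\pi$. On $A(\Fp)$ we have $\pi = 1$, hence $\hat\pi = \pi^{-1}\cdot[p]$ acts on $P$ through the Weil polynomial. More concretely, $P$ has order dividing $m = |A(\Fp)|$ by \Cref{thm: possible group structures of rational jacobians}, so $[p]P$ is determined by $p \bmod m$.
\begin{itemize}
\item For $b=-2$: $p \equiv -1 \bmod m$ (with $m \mid p+1$), so $R = -P$.
\item For $b=2$: $p\equiv 1$, so $R = P$. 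The twist has the same group (\Cref{rem:twist-iso}), so $S = Q$.
\item For $b=0$: $m = p^2+1$ gives $p^2 \equiv -1$, so $T = [p^2]P = -P$.
\item For $b=\mp1$: $p^2 \pm p + 1 \equiv 0$ gives $[p^2]P + P = \mp[p]P$, i.e.\ $T + P = \mp R$. This matches the branch $n = p^2 \mp p + 1$ up to the sign convention.
\end{itemize}
Finally $[n]Q = \zero$ because $A^t(\Fp) \cong A(\Fp)$. I will also check that the early branches are not entered wrongly. Excluding $A[2]\cup A[3]$ guarantees $R \ne \pm P$ in the cyclic cases, because $p \pm 1 \not\equiv 0$ modulo the order of $P$ unless that order is $\le 3$. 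Likewise the three cyclic cases are distinguished from one another.

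\emph{Soundness.} This is the main step; I take a union bound over the accepting lines.
\begin{itemize}
\item Line 5 fires only if $[p+1]P = \zero$, which by \Cref{lem:prob-p+1} has probability $O(p^{-1/2})$ for non-supersingular $A$.
\item Line 9 requires $[p-1]P = \zero$ and $[p-1]Q = \zero$, handled by \Cref{lem:prob-p-1}.
\item For the final return, the conditions $T = -P$ or $T + P = \pm R$ mean exactly $[n]P = \zero$ with $n = p^2+1$ or $p^2 \mp p + 1$. Together with $[n]Q = \zero$, this is the event of \Cref{lem:prob-p2-p-1}.
\end{itemize}
Each of these has probability $O(p^{-1/2})$, and there are only five values of $b$, so the total is $O(p^{-1/2})$.

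The subtlety is that $P$ and $Q$ are sampled from the conditioned sets $A(\Fp)\setminus(A[2]\cup A[3])$ rather than uniformly. I will handle this by noting that conditioning on an event of probability $1 - O(p^{-2})$ inflates probabilities by at most a factor $1+O(p^{-2})$. One caveat: if $A(\Fp)$ is nearly all small torsion, it is tiny and contradicts the Hasse--Weil bound, so this case cannot occur. Hence the bounds of \Cref{thm:probabilistic-testing} transfer to the conditioned sampling.
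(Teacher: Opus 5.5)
Your proposal is correct and follows the paper's route. The runtime comes from a constant number of scalar multiplications, completeness from tracing each branch through the group structures of \Cref{thm: possible group structures of rational jacobians}, and soundness from reducing each accepting line to \Cref{thm:probabilistic-testing}, with the exclusion of $2$- and $3$-torsion preventing misclassification. Your explicit union bound and your conditioning argument for sampling outside $A[2]\cup A[3]$ spell out what the paper's short proof leaves implicit (minor slip: in the completeness step you write $m = |A(\Fp)|$ where you mean the exponent of $A(\Fp)$).
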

\begin{proof}
    As multiplication-by-$p$ and random sampling can be performed in $O(\log p)$ time, \Cref{alg:supersingularity_test} terminates in $O(\log p)$.
    Correctness and probability bounds follow directly from \Cref{thm:probabilistic-testing}, as long as
    we exclude 2- or 3-torsion.
    Although cryptographically such torsion occurs with negligible probability,
    their exclusion ensures that \Cref{alg:supersingularity_test} never misclassifies a supersingular surface.\footnote{
    For example, as $\gcd(p^2 + p + 1, p - 1) = 3$ when $p \equiv 1 \bmod 3$, such points could
    misclassify a supersingular Jacobian of type $b = -1$ as one of type $b = 2$.}
\end{proof}
The probability that \IsSupersingular returns {\tt true} when $A$ is ordinary is negligible when $p$ is large.
For example, primes $p$ in isogeny-based cryptography are often of size $2 \lambda$ bits,
making the failure probability of \IsSupersingular roughly $2^{-\lambda}$ where $\lambda$ is the security parameter,
typically $\lambda \in \{128, 192, 256\}$.

\begin{remark}
    It is crucial that the points sampled in \Cref{alg:supersingularity_test} are uniformly random,
    otherwise an adversary may always find specific points on specific abelian surfaces that allow an ordinary
    abelian surface to pass this test. For validation purposes in cryptographic applications, we must therefore
    assume that an adversary may choose the abelian surface, but not the randomness.
\end{remark}

\begin{remark}
    We stress that the use of the twist allows the full algorithm to work over $\Fp$ instead of $\Fpt$.
    Although this doesn't affect asymptotic cost estimates, this significantly affects real-world performance.
\end{remark}

\section{Conclusive Supersingularity Testing over $\Fp$}
\label{sec:conclusive}
In this section, we discuss conclusive variants of supersingularity testing
for abelian surfaces over $\Fp$, in contrast to the probabilistic methods from \Cref{sec:testing},
as some applications may require \emph{proof} that the Jacobian is supersingular.
Similarly, for small characteristics $p$, \Cref{alg:supersingularity_test} may have
a non-negligible failure rate, and conclusive testing ensures certainty.

A na\"ive approach simply verifies the explicit order of points $P \in \J(\Fp)$ and $Q \in \J^t(\Fp)$,
as a generalization of \cite[Alg. 1]{CSIDH}, using a product tree for additional efficiency.
As shown in \cite{pairings-CSIDH} for elliptic curves, the use of pairings significantly improves this method.
We generalize \cite[Alg. 5]{pairings-CSIDH} to Jacobians of genus-2 curves.
Due to the existence of isotropic subgroups for $g > 1$, this generalization is non-trivial
and requires a precise understanding of isotropy conditions on $\J(\Fp)$,
explored in \Cref{sec:isotropy-conditions-min-maxc,sec:isotropy-conditions-other} using results from
\Cref{sec:pairings-jacobians}. We furthermore analyze the requirements of discrete logarithms
in $\mu_{p+1}$ in verifying supersingularity.
Lastly,
in \Cref{sec:cryptographic-primes}, we
discuss a specialized variant for Jacobians where $2^f$ divides the exponent for a large power of $f$,
common in cryptographic applications, which allows a significant speed-up~\cite[Alg. 2]{dCSIDH}.

\subsection{Isotropy on Supersingular Abelian Surfaces of type $b = \pm 2$}\label{sec:isotropy-conditions-min-maxc}

For the conclusive supersingularity verification, we will perform specific pairings in $A[m]$ for specific values of $m \in \ZZ_{>0}$.
Depending on the isotropy conditions of these subgroups, the resulting pairing can be computed
over $\Fpk$ or $\Fpkt$ where $k$ is the embedding degree of $m$.
We prove this case by case per $b$-type, starting with $b = \pm 2$, where we have a large rank-2 subgroup of $A[m]$,
before we discuss the cyclic cases $b \in \{-1, 0, 1\}$.

\begin{lemma}\label{lem: perfect pairings on type b = 2}
	Let $A$ be a supersingular abelian surface over $\Fp$ of type $b = 2$
  and let $m = p-1$ or $m = \frac{p-1}{2}$ be the exponent of $A(\Fp)$ and let $n = p-1$.
  \begin{enumerate}[label=(\roman*)]
    \item The subgroup generated by the eigenspaces $A[\pi - 1] \subseteq A(\Fp)$
    and $A[\pi + 1] \subseteq A^t(\Fp)^\rho$ is of index 4 in $A[m] \subseteq A(\Fpt)$ if
    $A(\Fp) \cong (\ZZ/m\ZZ)^2$, of index 8 if
    $A(\Fp) \cong \ZZ/m\ZZ \times \ZZ/\tfrac{m}{2}\ZZ \times \ZZ/2\ZZ$, and of index 1 if
    $A(\Fp) \cong (\ZZ/m\ZZ \times \ZZ/2\ZZ)^2$.
    \item  For every
        $Q \in A(\Fp)$ of even order $k$, there exists $P \in A(\Fp)$ such that
        $e_n(P, Q)$ has order $k/2$.
  \end{enumerate}
\end{lemma}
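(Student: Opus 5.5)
Part (i) is a direct specialization of \Cref{lem:decomposition of Jn} to $b = 2$. For $b=2$ we have $p \equiv 1 \bmod m$, so by \Cref{lem:eigenspace fields} the eigenspaces of $\pm p$ coincide with those of $\pm 1$. Hence $H = A[\pi-1] + A[\pi+1] = A(\Fp) + A^t(\Fp)^\rho$. I will quote the index computation from \Cref{lem:decomposition of Jn}: since $A(\Fp)\cap A^t(\Fp)^\rho = A[2](\Fp)$, we get $|H| = |A(\Fp)|^2/|A[2](\Fp)|$. I then substitute the three group structures from \Cref{thm: possible group structures of rational jacobians}, using $|A[2](\Fp)| \in \{4, 8, 16\}$ respectively. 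The only point to recheck is the containment $A[m]\subseteq A(\Fpt)$, which holds by \Cref{rem:torsion-in-Fpt}.

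For part (ii), I would use \Cref{lem:trace-pairing} with $n = p-1$. Let $Q \in A(\Fp)$ have even order $k$. By non-degeneracy of $e_n$ on $A[n] \cong (\ZZ/n\ZZ)^4$, there exists $R \in A[n]$ with $e_n(R,Q)$ of order exactly $k$. Set $P = (\pi+1)R$. By \Cref{lem:trace-pairing}, $P \in A(\Fp)$ and $e_n(P,Q) = e_n(R,Q)^{p+1}$. Since $p+1 = (p-1)+2$ and $e_n(R,Q)^{n}=1$, this equals $e_n(R,Q)^2$, which has order $k/2$ because $k$ is even. This gives the required $P$.

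The step I expect to need care is the existence of $R$ with $e_n(R,Q)$ of order exactly $k$. Here $Q$ has order $k \mid m \mid n$. The map $R \mapsto e_n(R,Q)$ is a homomorphism $A[n]\to\mu_n$ whose image is $\mu_{k}$. To see this, write $Q = [n/k]Q'$ with $Q'$ of order $n$, which is possible since $A[n]$ is free over $\ZZ/n\ZZ$ and $Q$ has order $k$. Non-degeneracy gives some $R$ with $e_n(R,Q')$ a primitive $n$-th root, and then $e_n(R,Q) = e_n(R,Q')^{n/k}$ has order $k$. I would also remark that the order cannot be improved to $k$ within $A(\Fp)$. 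Indeed, by \Cref{lem:almost symplectic basis} and the relation $e_n(\pi P,\pi Q)=e_n(P,Q)^p$ for $P,Q$ rational, we get $e_n(P,Q)^{p-1}=1$ only, so the squaring from $\pi+1$ is the expected loss. This remark is not needed for the statement itself.
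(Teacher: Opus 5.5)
Your proof is correct and is essentially the paper's own: (i) is read off from \Cref{lem:decomposition of Jn} together with the three group structures, and (ii) takes a witness $R \in A[n]$ with $e_n(R,Q)$ of order $k$ and pushes it through the trace map $\pi+1$ of \Cref{lem:trace-pairing}, using $p+1 \equiv 2 \pmod{n}$. One caveat concerns your optional closing remark: $e_n(P,Q)^{p-1}=1$ holds trivially because $n=p-1$, so it proves nothing; the actual reason the order cannot reach $k$ is that $A(\Fp)=(\pi+1)A[n]$, so every $P\in A(\Fp)$ satisfies $e_n(P,Q)=e_n(R,Q)^2$ for some $R$, and therefore $e_n(P,Q)^{k/2}=e_n(R,Q)^{k}=1$.
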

\begin{remark}
  Note that, according to \Cref{thm: possible group structures of rational jacobians}, $p \equiv 3 \bmod 4$
  implies that the exponent is always $m = p-1$,
  whereas when $p \equiv 1 \bmod 4$ both $m = p - 1$ and $m = \tfrac{p-1}{2}$ can happen.
\end{remark}

\begin{proof}
  \textbf{(i)} This follows directly from \Cref{thm: possible group structures of rational jacobians}
  and \Cref{lem:decomposition of Jn}.

  \textbf{(ii)} As $p + 1 \equiv 2 \pmod{n}$,~\Cref{lem:trace-pairing} gives
$e_n((\pi + 1)R, Q) = e_n(R, Q)^2$ for all $R \in A[n]$ and $Q \in A(\Fp)$.
Let $Q \in A(\Fp)$ have even order $k$. By non-degeneracy of $e_n$ on $A[n]$, the
homomorphism $e_n(\cdot, Q) \colon A[n] \to \mu_n$ has image $\mu_k$ and so there is $R \in A[n]$ with $e_n(R, Q)$ of order $k$. Then $P = (\pi + 1)R \in A(\Fp)$ satisfies that
$e_n(P, Q) = e_n(R, Q)^2$ has order $k/2$.
\end{proof}
\Cref{lem: perfect pairings on type b = 2} shows that abelian surfaces of type $b = 2$
are as non-isotropic as possible, except for the 2-torsion.
The case $b = -2$ is the exact opposite.

\begin{lemma}\label{lem: trivial pairings on type b = -2}
	Let $A$ be a supersingular abelian surface over $\Fp$ of type $b = -2$
  and let $m = p + 1$ or $m = \tfrac{p + 1}{2}$ be the exponent of $A(\Fp)$.
  \begin{enumerate}[label=(\roman*)]
    \item The subgroup generated by eigenspaces $A[\pi - 1] \subseteq A(\Fp)$
    and $A[\pi + 1] \subseteq A^t(\Fp)^\rho$ is of index 4 in $A[m] \subseteq A(\Fpt)$ if
    $A(\Fp) \cong (\ZZ/m\ZZ)^2$, of index 8 if
    $A(\Fp) \cong \ZZ/m\ZZ \times \ZZ/\tfrac{m}{2}\ZZ \times \ZZ/2\ZZ$, and of index 1 if
    $A(\Fp) \cong (\ZZ/m\ZZ \times \ZZ/2\ZZ)^2$.
    \item The pairing $e_n$ is trivial on $A(\Fp)$ and on $A^t(\Fp)^\rho$.
  \item The pairing
        \[
          A(\Fp) \times A^t(\Fp) \to \mu_n, \qquad
          (P, Q) \mapsto e_n(P, Q^\rho),
        \]
        has left kernel $A[2](\Fp)$ and right kernel $A^t[2](\Fp)$.
    \item Let $P_1, P_2 \in A(\Fp)$ and $Q_1, Q_2 \in A^t(\Fp)$, let $\zeta$ be a
        primitive $n$-th root of unity, and let
        $M = \big(\log_\zeta e_n(P_i, Q_j^\rho)\big)_{i,j}$ over $\ZZ/n\ZZ$. Then
        \[
          |\mathrm{im}\, M| \;\geq\;
          \frac{|\langle P_1, P_2 \rangle| \cdot |\langle Q_1, Q_2 \rangle|}{4n^2}.
        \]
  \end{enumerate}
\end{lemma}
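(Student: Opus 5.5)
Here $n = p+1$, and I prove the four parts in order.

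Part (i) follows exactly as in \Cref{lem: perfect pairings on type b = 2}(i). By \Cref{lem:eigenspace fields}, for $b=-2$ the eigenspaces $A[\pi \mp p]$ coincide with $A[\pi \pm 1]$. So $H = A(\Fp) + A^t(\Fp)^\rho$, and the index formula of \Cref{lem:decomposition of Jn} together with the three group structures in \Cref{thm: possible group structures of rational jacobians} gives $4, 8, 1$.

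For part (ii), take $P, P' \in A(\Fp)$. Since $\pi P = P$, Galois equivariance gives $e_n(P,P') = e_n(\pi P, \pi P') = e_n(P,P')^p$. Hence $e_n(P,P')^{p-1} = 1$. The value also lies in $\mu_n = \mu_{p+1}$, so its order divides $\gcd(p-1,p+1) = 2$. To remove this residual $\mu_2$ I use \Cref{lem:trace-pairing}. By that lemma, $(\pi+1)A[n] = A(\Fp)$, so write $P = (\pi+1)R$ with $R \in A[n]$. Then $e_n(P,P') = e_n(R,P')^{p+1} = 1$, because $e_n(R,P') \in \mu_{p+1}$. For $S, S' \in A^t(\Fp)^\rho$ the same argument works with $\pi S = -S$: equivariance gives $\zeta^{p} = \zeta$, and writing $S = (\pi-1)R$ gives $e_n(S,S') = e_n(R,S')^{-(p+1)} = 1$. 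So $e_n$ is trivial on both subgroups.

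For part (iii), I combine non-degeneracy of $e_n$ on $A[n]$ with the decomposition $A[n] \supseteq A(\Fp) + A^t(\Fp)^\rho$ from part (i).
\begin{itemize}
\item Suppose $P \in A(\Fp)$ pairs trivially with all of $A^t(\Fp)^\rho$. By part (ii) it also pairs trivially with $A(\Fp)$, so it pairs trivially with $H$.
\item Hence $P$ lies in the annihilator $H^\perp$, which has order $[A[n]:H]$.
\item I expect this step to be the main obstacle: identifying $H^\perp \cap A(\Fp)$ as exactly $A[2](\Fp)$ rather than merely something of bounded order.
\end{itemize}
To handle it I would use $(\pi+1)$ again. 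For $Q' \in A^t(\Fp)^\rho$, \Cref{lem:trace-pairing} gives $e_n(P,Q') = e_n(P,(\pi-1)R)$ for some $R \in A[n]$. By equivariance with $\pi P = P$, this equals $e_n(P,R)^{p-1} = e_n(P,R)^{-2}$, since $p-1 \equiv -2 \bmod n$. So $e_n(P,\cdot)$ vanishes on $A^t(\Fp)^\rho$ exactly when $e_n(P,\cdot)^2$ vanishes on all of $A[n]$. By non-degeneracy this happens exactly when $[2]P = 0$, so the left kernel is $A[2](\Fp)$. The right kernel follows symmetrically, using $(\pi+1)$ on the other side.

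For part (iv), let $G = \langle P_1,P_2\rangle$ and $G' = \langle Q_1,Q_2\rangle^\rho$. Part (iii) shows that the pairing induces an injection of $G/(G\cap A[2](\Fp))$ into $\mathrm{Hom}(G',\mu_n)$. Its image is the subgroup of the dual given by the rows of $M$. Therefore the row space of $M$, i.e.\ the subgroup of $(\ZZ/n\ZZ)^2$ that $M$ generates, has order at least $|G|/|A[2](\Fp)\cap G|$. This bound only gives $|G|/16$, however, and I still need a factor involving $|G'|$.

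To get it, I note that $M$ is a $2\times 2$ matrix over $\ZZ/n\ZZ$, so $|\mathrm{im}\,M|\cdot|\ker M| = n^2$. Its kernel, viewed as a pairing on $\ZZ^2$ modulo relations, consists of those combinations $x_1Q_1+x_2Q_2$ that pair trivially with $G$. The right kernel computation gives
\[
|\ker M| \le \frac{n^2}{|G'|}\cdot [\text{right radical of } G \text{ in } G'].
\]
Combining the two estimates with $|A[2](\Fp)| \le 4$, and tracking the 2-torsion loss carefully (each side contributes at most a factor $2$ coming from the Smith form of $A[2](\Fp) \cong (\ZZ/2)^{\le 2}$), should yield the stated $|G||G'|/(4n^2)$. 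The constant bookkeeping here is fiddly but routine.
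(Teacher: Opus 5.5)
Parts (i) and (ii) match the paper. Your part (iii) is correct but takes a different route. The paper uses (ii) together with the fact that $A(\Fp)$ and $A^t(\Fp)^\rho$ are isotropic of order $n^2$ in $A[n]$, hence equal to their own orthogonals, so the left kernel is $A(\Fp)\cap A^t(\Fp)^\rho = A[2](\Fp)$. You instead compute $e_n(P,(\pi-1)R) = e_n(2P,R)^{-1}$ and apply non-degeneracy on $A[n]$ directly. That argument is self-contained and does not even need (ii).

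The genuine gap is in (iv).

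\textbf{The claimed injection is false.} Your first step asserts that (iii) yields an injection $G/(G\cap A[2](\Fp)) \hookrightarrow \mathrm{Hom}(G',\mu_n)$. But (iii) only describes the kernel of pairing against \emph{all} of $A^t(\Fp)^\rho$. Against a subgroup $G'$ the kernel can be far larger: for $Q_1 = Q_2 = 0$ you get $M = 0$, while $|G|$ can be $n^2$. So the bound $|\mathrm{im}\,M| \geq |G|/|G\cap A[2](\Fp)|$ is false.

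\textbf{The remaining step is misdiagnosed.} The part you call ``fiddly but routine'' is not a factor-$2$ loss of $2$-torsion on each side. The real loss on the $G'$ side is the index $[A^t(\Fp)^\rho : G'] = n^2/|G'|$, which can be huge. Also, $|A[2](\Fp)|$ can be $16$; what is $\leq 4$ is $|G[2]|$, because $G$ is $2$-generated.

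\textbf{The missing idea: factor through the full dual.}
\begin{enumerate}
\item By (iii), the map $G \to \mathrm{Hom}(A^t(\Fp)^\rho,\mu_n)$ has kernel $G[2]$, so its image has order at least $|G|/4$.
\item The restriction $\mathrm{Hom}(A^t(\Fp)^\rho,\mu_n) \to \mathrm{Hom}(G',\mu_n)$ has kernel $\mathrm{Hom}(A^t(\Fp)^\rho/G',\mu_n)$, of order $n^2/|G'|$.
\item The image of the composite has order $|\mathrm{im}\,M|$, so $|\mathrm{im}\,M| \geq |G|\,|G'|/(4n^2)$.
\end{enumerate}
Equivalently, in your kernel formulation you need $|G' \cap \mathrm{Ann}(G)| \leq 4n^2/|G|$. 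This follows from (iii) by duality: the image of $A^t(\Fp)^\rho$ in $\mathrm{Hom}(G,\mu_n)$ is the annihilator of the left kernel $G[2]$, so it has order $|G|/|G[2]|$.
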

\begin{proof}
  \textbf{(i)} This follows directly from \Cref{thm: possible group structures of rational jacobians}
  and \Cref{lem:decomposition of Jn}.

  \textbf{(ii)} Write $G = A(\Fp) = A[\pi - 1]$ and
  $G^t = A^t(\Fp)^\rho = A[\pi + 1]$.
  As $p + 1 \equiv 0 \pmod{n}$,~\Cref{lem:trace-pairing} gives
  $e_n((\pi + 1)R, Q) = 1$ for all $R \in A[n]$ and $Q \in G$, and
  $e_n((\pi - 1)R, S) = 1$ for all $R \in A[n]$ and $S \in G^t$. Since $(\pi + 1)A[n] = G$ and
  $(\pi - 1)A[n] = G^t$, both $G$ and $G^t$ are isotropic.

    \textbf{(iii)} Since $G$ is isotropic of order $n^2$ in $A[n]$, we have that $G = G^{\perp}$, and similarly for $G^t$. Thus, a point $P \in G$ lies in the left kernel if and only if $P \in G^t$, i.e., $P \in G \cap G^t = A[2](\Fp)$. By symmetry, the right kernel is $A^t[2](\Fp)$.

    \textbf{(iv)} Write $H = \langle P_1, P_2 \rangle \subseteq G$ and
    $H^t = \langle Q_1^\rho, Q_2^\rho \rangle \subseteq G^t$.
    The image of $M$ is the image of the homomorphism
    $H \to \mathrm{Hom}(H^t, \mu_n)$, $P \mapsto e_n(P, \cdot)$.
    This is a composition of maps $H \to \mathrm{Hom}(G^t, \mu_n)$ with the restriction
    $\mathrm{Hom}(G^t, \mu_n) \to \mathrm{Hom}(H^t, \mu_n)$. The first map has kernel $H \cap A[2](\Fp) = H[2]$ by \textbf{(iii)}. The order of the kernel is therefore at most $4$ (since $H$ is generated by two elements), and the image has order at least $|H|/4$.
    The restriction map has kernel of order $[G^t : H^t] = n^2/|H^t|$. Hence,
    $|\mathrm{im}\, M| \geq |H| \cdot |H^t| / (4n^2)$.
\end{proof}

\begin{remark}\label{rem:2-rank from f}
  Which of the three cases in \textbf{(i)} of \Cref{lem: perfect pairings on type b = 2} and
  \Cref{lem: trivial pairings on type b = -2} applies is governed by the $2$-rank of $A(\Fp)$ alone:
  by the index formula of \Cref{lem:decomposition of Jn}, the three group structures listed there are
  precisely those of $2$-rank $2$, $3$ and $4$, giving index $4$, $8$ and $1$ respectively.
  When $A$ is the Jacobian of a genus-$2$ curve $C : y^2 = f(x)$, this rank is read off directly
  from the factorization type of $f$ by \Cref{lem: factoring fx gives 2 rank}, so the case distinction
  costs a single polynomial factorization rather than a group-structure computation.
  Moreover, \Cref{thm: possible group structures of rational jacobians} forces the $2$-rank to be at
  least $2$ when $b = \pm 2$, so a factorization type of $f$ of $2$-rank $0$ or $1$ already certifies
  that $\J$ is \emph{not} supersingular of type $b = \pm 2$.
\end{remark}
  The proof of supersingularity for the case $b = 2$ is therefore very efficient: we only need to verify the order of pairing values $\zeta \in \Fp$,
  and both the pairing and the verification can be computed over $\Fp$. This is because in this case, the eigenspaces $1$ and $p$,
  resp. $-1$ and $-p$ collapse, so that the pairing on $A(\Fp)$ is as
  non-degenerate as possible.

For abelian surfaces of type $b = -2$ the situation is reversed: for $n = p + 1$, the pairing
  $e_n$ is trivial on $A(\Fp)$. Pairings between
  points of $A(\Fp)$ therefore carry no information, and we instead pair points of
  $A(\Fp)$ with points of $A^t(\Fp)^\rho$. A single pairing $e_n(P, Q^\rho)$ of
  order $k$ only shows that $A(\Fp)$ and $A^t(\Fp)$ contain cyclic subgroups of
  order $k$, which is too small for the width-bound argument. We therefore take
  $P_1, P_2 \in A[n](\Fp)$ and $Q_1, Q_2 \in A^t[n](\Fp)$ and compute the four
  pairings $\zeta_{i,j} = e_n(P_i, Q_j^\rho)$. We can compute a matrix $M = (\log_\zeta \zeta_{i,j})$ with discrete logarithms in $\mu_n$\footnote{For most
  applications in isogeny-based cryptography, smoothness of $p+1$ is a common assumption, thus
  discrete logarithms are efficient.}, and then determine the size of its image. By~\Cref{lem: trivial pairings on type b = -2}\textbf{(iv)}, points generating large subgroups of $A(\Fp)$ and $A^t(\Fp)$ give a large image for each of the three possible group structures.

\begin{corollary}\label{cor:conclusive-testing-non-cyclic}

  Let $A$ be an abelian surface over $\Fp$. Write
  $W = 8\sqrt{p}(p+1)$, the width of the Hasse--Weil interval over $\Fp$.
  \begin{itemize}
    \item Let $n = p-1$, and let $P_1, P_2 \in A[n](\Fp)$ and $Q_1, Q_2 \in A^t[n](\Fp)$. If $e_n(P_1, P_2)$ and $e_n(Q_1, Q_2)$ have orders $k, k'$ with $k^2, k'^2 > W$, then $A$ is supersingular of type $b = 2$.
    \item  Let $n = p + 1$, let $\zeta$ be a primitive $n$-th root of unity, and let $P_1, P_2 \in A[n](\Fp)$ and $Q_1, Q_2 \in A^t[n](\Fp)$. Let $\zeta_{i,j} = e_n(P_i, Q_j^\rho)$, where $\rho$ is the twisting isomorphism. If the matrix $M = (\log_\zeta \zeta_{i,j})_{i,j}$ over $\mathbb{Z}/n\mathbb{Z}$ satisfies $|\mathrm{im}\, M| > W$, then $A$ is supersingular of type $b = -2$.
  \end{itemize}
  Conversely, let $A$ be supersingular of type $b = \pm 2$ and let $n = p \mp 1$.
  For $p$ large enough, there exist $P_1, P_2 \in A(\Fp)$ and $Q_1, Q_2 \in A^t(\Fp)$
  satisfying the corresponding condition above.
\end{corollary}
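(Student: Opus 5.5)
The plan has two directions. In each, the pairing data forces subgroups of $A(\Fp)$, $A^t(\Fp)$ or $A(\Fpt)$ that are too large for a non-supersingular surface, and \Cref{lem:width-bound-argument} then pins down the order.

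\emph{Case $n = p-1$.} Suppose $e_n(P_1,P_2)$ has order $k$. Bilinearity gives $k \mid \operatorname{ord}(P_1)$ and $k \mid \operatorname{ord}(P_2)$. Moreover, for integers $a,b$ with $aP_1 + bP_2 = \zero$ we get
\[
1 = e_n(aP_1+bP_2, P_2) = e_n(P_1,P_2)^a ,
\]
so $k \mid a$, and symmetrically $k \mid b$. Hence $\langle P_1,P_2\rangle$ surjects onto $(\ZZ/k\ZZ)^2$, and $|\langle P_1, P_2\rangle| \geq k^2 > W$. The same argument gives $|\langle Q_1,Q_2\rangle| > W$ inside $A^t(\Fp)$. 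Both subgroups have order dividing $(p-1)^2$, since they lie in the $(p-1)$-torsion and have rank at most $2$ up to $2$-torsion by \eqref{lem:weil-pairing}. To be safe, I will work with the subgroup $\ZZ/k \times \ZZ/k$ itself, whose order is $k^2 \mid (p-1)^2$.

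Applying \Cref{lem:width-bound-argument} to $A(\Fp)$ then gives $|A(\Fp)| \equiv 0 \bmod k^2$, and uniquely so in the interval. I do not get $|A(\Fp)| = (p-1)^2$ directly from this, so I will mimic \Cref{lem:prob-p-1}: form $S = \langle P_1,P_2,Q_1^\rho,Q_2^\rho\rangle \subseteq A[p-1]$. Its order is at least $k^2k'^2/16$, which exceeds roughly $W^2/16 \gg 8(p^3+p)$. The width-bound argument over $\Fpt$ then yields $|A(\Fpt)| = (p-1)^4$, so $A$ is minimal, hence supersingular by \Cref{lem:poonen en co}, and necessarily of type $b=2$.

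\emph{Case $n = p+1$.} The image of $M$ is a quotient of $\langle P_1,P_2\rangle$, because $P \mapsto (\log e_n(P,Q_j^\rho))_j$ factors through it. So $|\langle P_1,P_2\rangle| \geq |\mathrm{im}\,M| > W$. This subgroup has order dividing $(p+1)^2$, modulo the same caveat about restricting to a rank-two piece. I then intend to reuse the proof of \Cref{lem:prob-p+1}. The rank bound there gives $h_1 \mid 2a_p$, so a subgroup of $A(\Fp)[p+1]$ of order exceeding $W$ forces $a_p = 0$, and then \Cref{lem:width-bound-argument} forces $|A(\Fp)| = (p+1)^2$, i.e.\ type $b=-2$. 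The main obstacle is making this rigorous, in particular that the $h_2h_3$ factor, at most $4$, does not spoil the bound. If it does, I will instead apply the twist version: $\mathrm{im}\,M$ also bounds $\langle Q_1,Q_2\rangle$, and the width argument over $\Fpt$ applies to $\langle P_i, Q_j^\rho\rangle$.

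\emph{Converse.} For $b=2$, choose $P_1$ of order $m$. \Cref{lem: perfect pairings on type b = 2}(ii) supplies $P_2$ with $e_n(P_1,P_2)$ of order at least $m/2 \geq (p-1)/4$, and $(p-1)^2/16 > 8\sqrt p(p+1)$ holds for large $p$. Apply the same to $A^t$, which is again of type $2$ by \Cref{rem:twist-iso}. For $b=-2$, take generators $P_1,P_2$ of $A(\Fp)$ and $Q_1,Q_2$ of $A^t(\Fp)$. By \Cref{lem: trivial pairings on type b = -2}(iv),
\[
|\mathrm{im}\,M| \geq \frac{n^2 \cdot n^2}{4n^2} = \frac{n^2}{4},
\]
using that both groups have order $n^2$, and $n^2/4 = (p+1)^2/4 > W$ for large $p$.
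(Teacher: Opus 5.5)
The overall strategy is right, but two steps fail as written: the $n=p-1$ direction and the $b=-2$ converse.

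\textbf{The case $n=p-1$.} The numerical step is false. From $|S|\ge k^2k'^2/16$ you only get $|S|>W^2/16=4p(p+1)^2$. Since $8(p^3+p)-4p(p+1)^2=4p(p-1)^2>0$, this lower bound lies \emph{below} the width of the Hasse--Weil interval over $\Fpt$, not far above it.

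The detour is also unnecessary. $\langle P_1,P_2\rangle$ is $2$-generated inside $A[p-1]$, so its order divides $(p-1)^2$ with no rank caveat. Moreover $(p-1)^2$ itself lies in the Hasse--Weil interval over $\Fp$, so it is the unique multiple there, and \Cref{lem:width-bound-argument} gives $|A(\Fp)|=(p-1)^2$ at once. The same holds for $A^t$, and \Cref{lem: the twist is enough} then gives $a=0$ and $b=2$. This is the paper's proof.

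If you want to keep your route, there is a repair. The intersection $\langle P_1,P_2\rangle\cap\langle Q_1^\rho,Q_2^\rho\rangle$ is killed by $2$, since $\pi$ acts as $+1$ on one factor and $-1$ on the other. It sits in a $2$-generated group, so it has order at most $4$, not $16$. Then $|S|>W^2/4=16p(p+1)^2>8(p^3+p)$.

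\textbf{The converse for $b=-2$.} You take generators $P_1,P_2$ of $A(\Fp)$ and use $|\langle P_1,P_2\rangle|=|\langle Q_1,Q_2\rangle|=n^2$. By \Cref{thm: possible group structures of rational jacobians}, for $p\equiv 3\bmod 4$ the group $A(\Fp)$ can be $\ZZ/n\ZZ\times\ZZ/\tfrac{n}{2}\ZZ\times\ZZ/2\ZZ$ or $(\ZZ/\tfrac{n}{2}\ZZ\times\ZZ/2\ZZ)^2$ with $\tfrac{n}{2}$ even. These have $2$-rank $3$ and $4$, so they are not $2$-generated, and their largest $2$-generated subgroups have order $n^2/2$ and $n^2/4$ respectively. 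Your bound $|\mathrm{im}\,M|\ge n^2/4$ is therefore valid only when $A(\Fp)\cong(\ZZ/n\ZZ)^2$.

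In general you must choose $P_1,P_2$ and $Q_1,Q_2$ spanning subgroups of order at least $n^2/4$. Then \Cref{lem: trivial pairings on type b = -2}(iv) yields only $|\mathrm{im}\,M|\ge n^2/64$. This still exceeds $W$ once $p+1>512\sqrt{p}$, roughly $p>2^{18}$, which is how the paper handles it.

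\textbf{The case $n=p+1$.} Your forward argument here is correct and genuinely different from the paper's. The paper bounds both $|\langle P_1,P_2\rangle|$ and $|\langle Q_1,Q_2\rangle|$ below by $|\mathrm{im}\,M|$, obtains $|A(\Fp)|=|A^t(\Fp)|=(p+1)^2$, and concludes via the twist. You use only $|\langle P_1,P_2\rangle|>W$ together with the Rybakov constraint $h_1\mid 2a_p$ from \Cref{lem:prob-p+1} to force $a_p=0$, so the order of $A^t$ is never needed.

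The obstacle you flag is not real. If $\langle P_1,P_2\rangle\cong\ZZ/c_0\ZZ\times\ZZ/c_1\ZZ$ with $c_1\mid c_0$, then as a subgroup of $A(\Fp)[p+1]$ it satisfies $c_0\mid h_0$ and $c_1\mid h_1$. Hence $h_0h_1\ge|\langle P_1,P_2\rangle|>W$, and the factor $h_2h_3$ never enters.
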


\begin{proof}
First let $n = p - 1$. If $e_n(P_1, P_2)$ has order $k$ then
$|\langle P_1, P_2 \rangle| \geq k^2 > W$. Moreover, $|\langle P_1, P_2 \rangle|$ divides $n^2$, so $|A(\Fp)| = (p-1)^2$ by~\Cref{lem:width-bound-argument}. The same argument applied to $Q_1, Q_2$ gives $|A^t(\Fp)| = (p-1)^2$. It follows that $A$ is supersingular of type $b = 2$.

Now let $n = p + 1$. We have that $|\mathrm{im}\, M| \leq |\langle P_1, P_2 \rangle|$ and, as the images of $M$ and $M^T$ have the same size, we have $|\mathrm{im}\, M| \leq |\langle Q_1^\rho, Q_2^\rho \rangle| = |\langle Q_1, Q_2 \rangle|$.
Both orders divide $n^2 = (p+1)^2$ and (by hypothesis) $|\mathrm{im}\, M| > W$, so~\Cref{lem:width-bound-argument} gives $|A(\Fp)| = |A^t(\Fp)| = (p+1)^2$.

For the converse, note that by \Cref{thm: possible group structures of rational jacobians}, each of the three possible group structures
of $A(\Fp)$ contains a subgroup generated by two elements of order at least $n^2/4$,
and the same holds for $A^t(\Fp) \cong A(\Fp)$. Moreover,
$A(\Fp)$ contains a point of order at least $n/2$.
Let $A$ be of type $b = 2$, so $n = p - 1$, and let $Q \in A(\Fp)$ be of maximal order
$k_0 \in \{n, n/2\}$. By \Cref{lem: perfect pairings on type b = 2}\textbf{(ii)},
there is $P \in A(\Fp)$ with
$e_n(P, Q)$ of order $k = k_0/2 \geq n/4$, and similarly on $A^t$ of order $k' \geq n/4$.
Now $k^2 \geq \left(\tfrac{p-1}{4}\right)^2 > W$ for $p > 2^{18}$.
A similar argument holds for type $b = -2$.
If moreover
$A(\mathbb{F}_p) \cong (\mathbb{Z}/n\mathbb{Z})^2$,
which by \Cref{rem:2-rank from f} can be read off from the 2-rank of $A(\mathbb{F}_p)$,
then any bases give $k = k' = n/2$ for $b = 2$, resp. $|\mathrm{im}\,M| \ge n^2/4$ for $b = -2$
by \Cref{lem: trivial pairings on type b = -2}\textbf{(iv)},
and the conditions hold for $p > 2^{10}$.
\end{proof}

\begin{remark}
  The converse statement ensures that such a test passes for supersingular surfaces.
  For cryptographic applications, primes always satisfy these bounds as they are in the range
  of hundreds of bits.
  In both cases ($b = \pm 2$), product trees and Lucas sequences can be used to speed up the verification~\cite{pairings-CSIDH}, and many operations can remain over $\Fp$ as both points lie on the same $\Fp$-rational Kummer surface.
\end{remark}

\subsection{Isotropy on Cyclic Supersingular \texorpdfstring{Abelian Surfaces}{Abelian Surfaces}}\label{sec:isotropy-conditions-other}
Due to their cyclic structure, similar pairing-based techniques for abelian surfaces of type $b \in \{-1, 0, 1\}$
would immediately verify the full group structure of $A(\Fp)$ and $A^t(\Fp)$, i.e.,
if $P \in A(\Fp)$ and $Q \in A^t(\Fp)$ have a pairing of order $n$, this verifies both
$A(\Fp)$ and $A^t(\Fp)$ have a subgroup of order $n$.
Unfortunately, such pairings will always be trivial,
as in general the $n$-th roots of unity only live in $\Fpthree$, $\Fpf$ or $\Fps$.
To perform conclusive supersingularity verification, we must therefore extend the field we work in.

\begin{lemma}[The case $b = 0$]\label{lem:b0}
  Let $A$ be an abelian surface over $\Fp$ of type $b = 0$
  and let $m = p^2 + 1$ be the exponent of $A(\Fp)$.

  \begin{enumerate}[label=(\roman*)]
    \item The subgroup generated by the four eigenspaces
    $A[\pi - 1] \subseteq A(\Fp)$,
    $A[\pi + 1] \subseteq A^t(\Fp)^\rho$,
    $A[\pi - p],
    A[\pi + p] \subseteq A^t(\Fpt)^{\rho'}$
    generate a subgroup of index 8 of the
    full $m$-torsion $A[m] \subseteq A(\Fpf)$.
    \item Let $P \in A[\pi - 1]$ and $Q \in A[\pi + 1]$, then $e_m(P, Q)$ is trivial.
    \item For any generators $P, Q, R, S$ of the four eigenspaces (see above), the pairings $e_m(P,R)$ and $e_m(Q,S)$ have order exactly $m/2$.
  \end{enumerate}
\end{lemma}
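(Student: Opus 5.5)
The plan is to treat the three parts separately, relying on \Cref{lem:decomposition of Jn}, \Cref{lem:eigenspace fields} and \Cref{lem:almost symplectic basis}, together with non-degeneracy of $e_m$ on $A[m]$.

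\textbf{Part (i).} This is immediate. By \Cref{lem:eigenspace fields} the four eigenspaces lie in $A(\Fp)$, $A^t(\Fp)^\rho$ and $A^t(\Fpt)^{\rho'}$. By \Cref{tab:jacobian-summary}, $A[m] \subseteq A(\Fpf)$, since $\pi^4$ acts as $p^4 \equiv 1 \bmod m$ on $A[m]$. \Cref{lem:decomposition of Jn} with $b=0$ gives index $8$.

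\textbf{Part (ii).} Apply \Cref{lem:almost symplectic basis}: $e_m(P,Q) \in \mu_{\gcd(m,p+1)}$. Since $m = p^2+1 \equiv (-1)^2+1 = 2 \bmod (p+1)$, we get $\gcd(m,p+1) \mid 2$, so $e_m(P,Q) \in \mu_2$. To remove the residual sign, I would use the $2$-part computed in the proof of \Cref{lem:decomposition of Jn}. The $2$-Sylow subgroup of $A[\pi-1]$ and of $A[\pi+1]$ is the same group $A[2](\Fp) \cong \ZZ/2\ZZ$. Write $P = P_2 + P'$ and $Q = Q_2 + Q'$ with $P_2, Q_2$ in the $2$-part and $P', Q'$ of odd order. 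Then $e_m(P,Q)^2 = 1$ kills the odd-order contributions, so $e_m(P,Q) = e_m(P_2,Q_2)$. Here $P_2, Q_2 \in \langle T\rangle$ for a single $2$-torsion point $T$, and $e_m(T,T)=1$ because the pairing is alternating. Hence $e_m(P,Q) = 1$.

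\textbf{Part (iii).} By \Cref{lem:almost symplectic basis}, the pairings $P$--$S$ and $Q$--$R$ lie in $\mu_2$, and $P$--$Q$ is trivial by (ii). The same argument used for $P$--$Q$ gives $e_m(R,S) \in \mu_{\gcd(m,p+1)} \subseteq \mu_2$. For the pairs within a single eigenspace, each eigenspace has cyclic prime-to-$2$ part, so the alternating pairing is trivial there, again up to the $2$-part. Consequently, on the odd part $A[m'] = \bigoplus_\lambda A[\pi-\lambda][m']$, with $m' = m/2$, the only possibly nontrivial pairings are $P$--$R$ and $Q$--$S$. Non-degeneracy of $e_{m'}$ on this rank-one direct sum then forces $e_{m'}(P',R')$ and $e_{m'}(Q',S')$ to be primitive, of order $m'$; otherwise $P'$ would have a nontrivial multiple lying in the kernel of the pairing. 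It remains to transfer this to $e_m$ on the actual generators. The $2$-parts of all four generators equal $T$, so $e_m(P,R) = e_m(P',R')\cdot e_m(T,T)\cdot(\text{cross terms})$. The cross terms between the odd and $2$-parts vanish because the orders are coprime, and $e_m(T,T)=1$. Using compatibility $e_m(X,Y) = e_{m'}(X,Y)$ up to the appropriate power for $X,Y \in A[m']$, together with $\gcd(2,m')=1$, the order is exactly $m/2$.

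The main obstacle is this $2$-adic bookkeeping: the generators of the eigenspaces have order $m$ (even), but their pairing must have order exactly $m/2$, not $m$. The factor of $2$ is lost because all four $2$-parts coincide in $A[2](\Fp)$, so I expect the alternating property at $T$ to be the decisive point. I would first check carefully that the $2$-part of each eigenspace is indeed exactly $\langle T\rangle$, as in the proof of \Cref{lem:decomposition of Jn}.
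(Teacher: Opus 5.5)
Your proposal is correct and follows essentially the same route as the paper: \Cref{lem:almost symplectic basis} confines the cross-eigenspace pairings to $\mu_2$, and the common $2$-part $T$ of all four eigenspaces kills the $2$-contribution via $e_m(T,T)=1$ (the paper writes this as $e_2([m']P,[m']Q)=e_2(T,T)=1$). Non-degeneracy of the Gram matrix on $A[m']$ then forces $e_{m'}(P,R)$ and $e_{m'}(Q,S)$ to be primitive; one small slip in (i) is that Cayley--Hamilton gives $\pi^4=[-p^2]$ rather than $[p^4]$, but since $-p^2\equiv p^4\equiv 1 \bmod m$ your conclusion $A[m]\subseteq A(\Fpf)$ is unaffected.
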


\begin{proof}
  \textbf{(i)} follows from~\Cref{thm: possible group structures of rational jacobians} and~\Cref{lem:decomposition of Jn}.
  Write $m=2m'$ with $m'$ odd (possible as $p^2\equiv1\bmod 8$). As in the proof of~\Cref{lem:decomposition of Jn}, $A(\Fp)$ is cyclic and every eigenspace meets $A[2]$ in
  $A[2](\Fp)=\langle T\rangle\cong\Z/2\Z$.
  \textbf{(ii)} By~\Cref{lem:almost symplectic basis}, $e_m(P,Q)\in\mu_2$, and its value is
  $e_2([m']P,[m']Q)=e_2(T,T)=1$.
  \textbf{(iii)} The same computation gives $e_2([m']P,[m']R)=1$, so $e_m(P,R)$ has order
  dividing $m'$. For the converse, note that $[A[m]:H]=8$ is a power of $2$, so
  $H\supseteq A[m']$ and the four eigenspaces restrict to a basis of
  $A[m']\cong(\Z/m'\Z)^4$. By~\Cref{lem:almost symplectic basis} the pairings
  $e_m(P,Q),e_m(P,S),e_m(Q,R),e_m(R,S)$ all have order dividing $2$, hence are trivial
  on $m'$-torsion. The Gram matrix of $e_{m'}$ on this basis therefore has only the
  entries $e_{m'}(P,R)$ and $e_{m'}(Q,S)$ off the diagonal, and non-degeneracy of
  $e_{m'}$ forces both to be primitive. Thus, $e_m(P,R)$ has order exactly $m'=m/2$, and likewise for $e_m(Q,S)$.
\end{proof}
The cases $b = \pm 1$ have similar issues: here $m = p^2 \mp p + 1$ is divisible by 3 exactly when
$p \equiv \mp 1 \bmod 3$, and in that case the eigenvalues $1, -1, p, -p$ are no longer distinct modulo 3,
resulting in pairings of order $m / 3$. However, this will still be enough
to give a proof of supersingularity.

\begin{lemma}[The case $b = 1$]\label{lem:b1}
    Let $A$ be an abelian surface over $\Fp$ of type $b = 1$ and let $m = p^2 - p + 1$
    be the exponent of $A(\Fp)$.

    \begin{enumerate}[label=(\roman*)]
      \item
      The subgroup generated by the four eigenspaces
      $A[\pi - 1] \subseteq A(\Fp)$,
      $A[\pi + 1] \subseteq A^t(\Fp)^\rho$,
      $A[\pi - p] \subseteq A^t(\Fpthree)^{\rho}$, and
      $A[\pi + p] \subseteq A(\Fpthree)$
      generate a subgroup of index 9 if $p \equiv 2 \bmod 3$, and of index 1 otherwise,
      of the full $m$-torsion $A[m] \cong (\ZZ/m\ZZ)^4 \subseteq A(\Fps)$.
      \item  Let $P \in A[\pi - 1]$ and $Q \in A[\pi + 1]$. Then $e_m(P, Q)$ is trivial or has order 3.
      \item For any generators $P, Q, R, S$ of the four eigenspaces above, the pairings $e_m(P, R)$ and $e_m(Q, S)$ have order $m/3$ or $m$.
    \end{enumerate}
\end{lemma}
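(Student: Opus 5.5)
The plan is to mirror the proof of \Cref{lem:b0}, replacing the prime $2$ by the prime $3$ and the index $8$ by the index $9$.

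\textbf{Part (i).} This is immediate from \Cref{thm: possible group structures of rational jacobians} (cyclicity of $A(\Fp)$ of order $m$) together with the case $b = 1$ of \Cref{lem:decomposition of Jn}. The fields of definition of the four eigenspaces are those listed in \Cref{lem:eigenspace fields}. Since $m \mid p^6 - 1$, the containment $A[m] \subseteq A(\Fps)$ follows.

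\textbf{Part (ii).} Write $m = 3^e m'$ with $3 \nmid m'$. Here $e \in \{0,1\}$, and $e = 1$ exactly when $p \equiv 2 \bmod 3$. The bound $e \le 1$ holds because $p^2 - p + 1 \equiv 3 \bmod 9$ whenever $p \equiv 2 \bmod 3$; one checks this with $p = 3t+2$. Now take $P \in A[\pi-1]$ and $Q \in A[\pi+1]$. \Cref{lem:almost symplectic basis} gives $e_m(P,Q) \in \mu_{\gcd(m,p+1)}$. Since $m = (p+1)^2 - 3p$, we get $\gcd(m,p+1) = \gcd(p+1,3p) = \gcd(p+1,3) \in \{1,3\}$. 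Hence $e_m(P,Q)$ is trivial or has order $3$, which is (ii). In the same way, $e_m(R,S)$ has order dividing $3$. The pairings $e_m(P,S)$ and $e_m(Q,R)$ lie in $\mu_{\gcd(m,2)} = \{1\}$, because $m$ is odd.

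\textbf{Part (iii).} First restrict to the prime-to-$3$ part. The index $[A[m]:H]$ is $1$ or $9$, so $H \supseteq A[m']$. Multiplying generators by $3^e$, the four eigenspaces restrict to a basis $[3^e]P, [3^e]Q, [3^e]R, [3^e]S$ of $A[m'] \cong (\ZZ/m'\ZZ)^4$, each eigenspace being cyclic of order $m$. By the computation in (ii), on this basis the pairings $e_{m'}$ between the pairs $(P,Q)$, $(R,S)$, $(P,S)$ and $(Q,R)$ are all trivial, since their orders divide $3$ or $1$ and are also prime to $3$. The pairing is alternating, so the Gram matrix of $e_{m'}$ has nonzero off-diagonal entries only at $(P,R)$ and $(Q,S)$. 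Non-degeneracy of $e_{m'}$ then forces $e_{m'}([3^e]P,[3^e]R)$ and $e_{m'}([3^e]Q,[3^e]S)$ to be primitive $m'$-th roots of unity. Consequently the order of $e_m(P,R)$ is divisible by $m'$ and divides $m = 3^e m'$. It is therefore $m'$ or $m$. When $e = 1$ this means $m/3$ or $m$; when $e = 0$ it is exactly $m$. The same argument applies to $e_m(Q,S)$.

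\textbf{Main obstacle.} The delicate step is the bookkeeping of the $3$-part, namely confirming that $3^2 \nmid m$ so that only the factor $3$ can be lost. It also has to be checked that the restricted eigenspaces really form a basis of $A[m']$. Both follow from \Cref{lem:decomposition of Jn}, whose local index at primes $\ell \ne 3$ is $1$, and from the congruence computation above. We do not attempt to decide which of $m/3$ or $m$ actually occurs, since the statement only claims the dichotomy.
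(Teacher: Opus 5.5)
Your proof is correct and follows the paper's own argument. Part (i) comes from the cyclic group structure and \Cref{lem:decomposition of Jn}, part (ii) from \Cref{lem:almost symplectic basis} with $\gcd(m,p+1)\mid 3$, and part (iii) from the block shape of the Gram matrix of $e_{m'}$ on the prime-to-$3$ part. Your explicit check $p^2-p+1\equiv 3 \bmod 9$ usefully supplies the fact $9\nmid m$, which the paper uses only implicitly.

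There is one slip, in the side remark in (i). The divisibility $m \mid p^6-1$ only gives $\mu_m\subseteq \Fps^\times$, not $A[m]\subseteq A(\Fps)$. The inclusion follows instead from $x^6+p^3=(x^2+p)\,\phi(x)$, which gives $\pi^6=[-p^3]$. This is the identity on $A[m]$ because $m\mid p^3+1$.
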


\begin{proof}
  \textbf{(i)} This follows directly from \Cref{thm: possible group structures of rational jacobians}
  and \Cref{lem:decomposition of Jn}. Note $3 \mid m$ if and only if $p \equiv 2 \bmod 3$ and $m$ is odd. \textbf{(ii).} Since $\gcd(m, 2) = 1$ and $\gcd(m, p+1) \leq 3$
  by \Cref{lem:almost symplectic basis}, we get $e_m(P, Q) \in \mu_3$. \textbf{(iii).} Write $m = 3^em'$ with $3 \nmid m'$. Arguing as in~\Cref{lem:b0}(iii), the index in \textbf{(i)} is a power of $3$, so the four eigenspaces restrict to a basis of $A[m']$. By~\Cref{lem:almost symplectic basis}, the pairings $e_m(P,Q)$, $e_m(P,S)$, $e_m(Q,R)$, $e_m(R,S)$ all have order dividing $3$, hence are trivial on $m'$-torsion. The non-degeneracy of $e_{m'}$ forces $e_{m'}(P,R)$ to be primitive. The $3$-part is not determined: modulo $3$ the eigenvalues $\pm 1$, $\pm p$ collapse in pairs, and $e_3$ need not vanish between the two resulting order $3$ eigengroups.\footnote{
  Experiments suggest the $3$-part of $e_m(P,R)$ and
  $e_m(P,Q)$ is trivial, i.e., that $e_m(P,R)$ has order exactly
  $m/3$;~\Cref{cor:conclusive-testing-cyclic} does not depend on this.} If $3 \nmid m$, then $e = 0$ and the order is $m$. A similar argument holds for $e_m(Q,S)$.
\end{proof}

\begin{lemma}[The case $b = -1$]\label{lem:bm1}
  Let $A$ be an abelian surface over $\Fp$ of type $b = -1$ and let $m = p^2 + p + 1$
  be the exponent of $A(\Fp)$.

  \begin{enumerate}[label=(\roman*)]
    \item
    The subgroup generated by the four eigenspaces
    $A[\pi - 1] \subseteq A(\Fp)$,
    $A[\pi + 1] \subseteq A^t(\Fp)^\rho$,
    $A[\pi - p] \subseteq A(\Fpthree)$, and
    $A[\pi + p] \subseteq A^t(\Fpthree)^{\rho'}$
    generate a subgroup of index 9 if $p \equiv 1 \bmod 3$, and of index 1 otherwise,
    of the full $m$-torsion $A[m] \cong (\ZZ/m\ZZ)^4 \subseteq A(\Fps)$.
    \item  Let $P \in A[\pi - 1]$ and $Q \in A[\pi + 1]$. Then $e_m(P, Q)$ is trivial.
    \item For any generators $P, Q, R, S$ of the four eigenspaces above, the pairings $e_m(P, R)$ and $e_m(Q, S)$ have order $m/3$ or $m$.
  \end{enumerate}
\end{lemma}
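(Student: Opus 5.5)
The plan is to mirror the proofs of \Cref{lem:b0} and \Cref{lem:b1}, changing only the local analysis at the prime $3$.

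\textbf{Part (i).} This follows directly from \Cref{thm: possible group structures of rational jacobians} together with \Cref{lem:decomposition of Jn}. For $b=-1$ the lemma gives index $9$ when $p \equiv 1 \bmod 3$ and index $1$ otherwise. For the field of definition we use \Cref{lem:eigenspace fields}: there $A[\pi - p] \subseteq A(\Fpthree)$ and $A[\pi+p] \subseteq A^t(\Fpthree)^{\rho'}$, since $p^3 \equiv 1 \bmod m$. We also note that $m = p^2+p+1$ is always odd, and that $3 \mid m$ exactly when $p \equiv 1 \bmod 3$.

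\textbf{Part (ii).} By \Cref{lem:almost symplectic basis}, $e_m(P,Q)$ has order dividing $\gcd(m,p+1)$. Since $m = p(p+1)+1$, we have $\gcd(m,p+1) = \gcd(1,p+1) = 1$, so the pairing is trivial.

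This is the key difference from $b=1$. There $m = p^2-p+1 = (p+1)(p-2)+3$, so the gcd can be $3$. Here the gcd is already $1$, with no further analysis needed.

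\textbf{Part (iii).} Write $m = 3^e m'$ with $3 \nmid m'$. Because the index in (i) is a power of $3$, the subgroup $H$ contains $A[m']$. Moreover, the four eigenspaces, each free of rank one locally at every $\ell \mid m'$ by the proof of \Cref{lem:decomposition of Jn}, restrict to a basis of $A[m'] \cong (\ZZ/m'\ZZ)^4$.

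By \Cref{lem:almost symplectic basis}, the pairings $e_m(P,Q)$ and $e_m(R,S)$ have order dividing $\gcd(m,p+1)=1$. The pairings $e_m(P,S)$ and $e_m(Q,R)$ have order dividing $\gcd(m,2)=1$. So all four are trivial, and the self-pairings vanish because $e_m$ is alternating. The Gram matrix of $e_{m'}$ in this basis therefore has nonzero entries only at the $(P,R)$ and $(Q,S)$ positions. Non-degeneracy of $e_{m'}$ then forces $e_{m'}(P,R)$ and $e_{m'}(Q,S)$ to be primitive $m'$-th roots of unity. Hence the orders of $e_m(P,R)$ and $e_m(Q,S)$ are divisible by $m'$.

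\textbf{Main obstacle: the $3$-part.} If $p \not\equiv 1 \bmod 3$, then $e=0$ and the order is exactly $m$. Otherwise we need the $3$-part to be $3^e$ or $3^{e-1}$, not smaller. First compute $v_3(m)$: writing $p = 1+3t$ gives $m = 3 + 9t + 9t^2$, so $e = 1$. Then the $3$-part of the order is $1$ or $3$. The order of $e_m(P,R)$ is therefore $m/3$ or $m$, and likewise for $e_m(Q,S)$.

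The delicate point is exactly this: without computing $v_3(m)=1$ one would only obtain the bound ``divisible by $m'$''. The explicit congruence $p^2+p+1 \equiv 3 \bmod 9$ for $p \equiv 1 \bmod 3$ is what closes the argument.
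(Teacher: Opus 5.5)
Your proof is correct and follows the same route as the paper, which simply defers to the proofs of \Cref{lem:b1} and \Cref{lem:b0} and notes that $\gcd(m,p+1)=1$ makes the pairing in (ii) trivial. Your explicit check that $p^2+p+1 \equiv 3 \bmod 9$ when $p \equiv 1 \bmod 3$ (so $v_3(m)=1$) fills in a step the paper leaves implicit when it asserts that the order is $m/3$ or $m$.
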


\begin{proof}
  As for~\Cref{lem:b1}, where $\gcd(m, p+1) = 1$ gives triviality in \textbf{(ii)}.
\end{proof}

\begin{corollary}\label{cor:conclusive-testing-cyclic}
  Let $A$ be an abelian surface over $\Fp$, let $b \in \{-1, 0, 1\}$, let
  $n = p^2 - bp + 1$. Let
  $P \in A[n](\Fp)$, $Q \in A^t[n](\Fp)$ and
  $R, S \in A[n]$,\footnote{In practice, $R$ and $S$ are taken in the eigenspaces as in \Cref{lem:eigenspace fields}, so that they are defined (possibly on a twist) over $\Fpk$, where $k$ is the embedding degree of $n$, and all pairing values lie in $\mu_n \subseteq \Fpk^\times$.} and suppose that
  $\zeta_1 = e_n(P, R)$ and $\zeta_2 = e_n(Q^\rho, S)$ have orders $k_1$ and $k_2$ with
  \(
  k_1 k_2 > 8 \gcd(n,2)\cdot(p^3 + p).
  \)
  Then $A$ is supersingular of type $b$.

    Conversely, let $A$ be supersingular of type $b \in \{-1, 0, 1\}$ and $p > 2^7$, and let
  $P \in A[\pi - 1]$, $Q^\rho \in A[\pi + 1]$, $R \in A[\pi - p]$ and $S \in A[\pi + p]$ be generators
  of the four eigenspaces. Then these points satisfy the above condition.
\end{corollary}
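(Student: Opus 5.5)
The plan is to turn the two pairing orders into lower bounds on cyclic subgroups of $A(\Fp)$ and $A^t(\Fp)$, assemble these into a large subgroup of $A(\Fpt)$ of order dividing $n^2$ (up to $\gcd(n,2)$), and then apply the width-bound argument together with \Cref{thm:supersingular-orders}. The converse will follow from \Cref{lem:b0}, \Cref{lem:b1} and \Cref{lem:bm1}.

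For the forward direction, since $P\in A[n](\Fp)$ and $\zeta_1=e_n(P,R)$ has order $k_1$, bilinearity gives $k_1\mid \operatorname{ord}(P)$. Similarly $k_2\mid \operatorname{ord}(Q^\rho)=\operatorname{ord}(Q)$. Let $S'=\langle P,Q^\rho\rangle\subseteq A(\Fpt)$. Then $P\in\ker(\pi-1)$ and $Q^\rho\in\ker(\pi+1)$, so $\langle P\rangle\cap\langle Q^\rho\rangle\subseteq A[2]$. This intersection has order at most $\gcd(n,2)$ because it lies in a cyclic group of order dividing $n$. Hence
\[
|S'|\geq k_1k_2/\gcd(n,2)>8(p^3+p),
\]
which exceeds the width of the Hasse--Weil interval over $\Fpt$ (dimension $2$, $q=p^2$). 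Moreover $|S'|$ divides $\operatorname{ord}(P)\operatorname{ord}(Q)$, which divides $n^2$.

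By \Cref{lem:width-bound-argument}, $|A(\Fpt)|$ is the unique multiple of $|S'|$ in the Hasse--Weil interval. I must check that this multiple is $n^2$ itself. First, $n^2$ is a multiple of $|S'|$. Second, $n^2$ lies in the interval, since $(p-1)^4\le n^2\le(p+1)^4$ for $b\in\{-1,0,1\}$. Therefore $|A(\Fpt)|=n^2$, and \Cref{thm:supersingular-orders} (both bullets, as $A$ is defined over $\Fp$) gives that $A$ is supersingular. It is of type $b$ because the order $|A(\Fpt)|=\phi_q(1)$ determines the type: for $b=\pm1$ this is \Cref{lem:order-p2-p-1}, and for $b=0$ it is \Cref{lemma:order-p2-1}.

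The main obstacle is the intersection bound, specifically making sure the factor $\gcd(n,2)$ in the hypothesis suffices. The subtle point is that $\gcd(n,p^2-1)$ can be $3$ when $b=\pm1$. I expect this is harmless because the intersection lies in $\ker(\pi-1)\cap\ker(\pi+1)\subseteq A[2]$ regardless, so only the $2$-part matters. For $b=\pm1$, $n$ is odd and the intersection is trivial. For $b=0$, $n\equiv 2\bmod 8$ and the intersection has order at most $2$.

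For the converse, take $P,Q^\rho,R,S$ to be eigenspace generators. \Cref{lem:b0}(iii) gives $k_1=k_2=n/2$ when $b=0$. \Cref{lem:b1}(iii) and \Cref{lem:bm1}(iii) give $k_1,k_2\ge n/3$ when $b=\pm1$. It then remains to check
\[
k_1k_2\ge n^2/9>8\gcd(n,2)(p^3+p).
\]
Since $n\ge p^2-p+1$, the left side is roughly $p^4/9$, while the right side is at most $16(p^3+p)$. So the inequality holds once $p$ exceeds about $144$, consistent with the stated bound $p>2^7$ after a routine check. For $b=0$ the bound is $n^2/4>16(p^3+p)$, which holds for even smaller $p$.
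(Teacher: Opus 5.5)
Your argument is the paper's proof. In the forward direction you use $k_i$ dividing the point orders, the bound on $\langle P\rangle\cap\langle Q^\rho\rangle$, the width bound over $\Fpt$, and then \Cref{thm:supersingular-orders}. In the converse you use \Cref{lem:b0}, \Cref{lem:b1} and \Cref{lem:bm1}. You are in fact slightly more explicit than the paper in three places:
\begin{itemize}
\item you justify the intersection bound via $\ker(\pi-1)\cap\ker(\pi+1)\subseteq A[2]$;
\item you check that $n^2$ lies in the Hasse--Weil interval;
\item you pin down the type via \Cref{lem:order-p2-p-1} and \Cref{lemma:order-p2-1}.
\end{itemize}

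The one step that fails as written is the final numerical check in the converse. For $b=\pm1$ you bound the right-hand side by $16(p^3+p)$ and get a threshold of about $144$. You then call this ``consistent with $p>2^7$'', but $2^7=128<144$. Indeed, the inequality $n^2/9>16(p^3+p)$ you set up actually fails at $p=131$, so your check does not cover the full stated range.

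The fix uses something you already observed: for $b=\pm1$ the integer $n$ is odd, so $\gcd(n,2)=1$. The required inequality is then only $n^2/9>8(p^3+p)$. With $n\ge p^2-p+1$, this holds for all primes $p\ge 79$.

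For $b=0$, write $p^3+p=p(p^2+1)$. Then $(p^2+1)^2/4>16(p^3+p)$ is equivalent to $p^2+1>64p$, which holds for $p\ge 67$. Both thresholds are covered by $p>2^7$, matching the paper's computation.
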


\begin{proof}
We have
$k_1 \mid \mathrm{ord}(P)$ and $k_2 \mid \mathrm{ord}(Q)$, so the subgroup $H = \langle P, Q^\rho \rangle \subseteq A(\Fpt)$ satisfies
\[
  |H| = \frac{\mathrm{ord}(P)\,\mathrm{ord}(Q)}{|\langle P \rangle \cap \langle Q^\rho \rangle|}
      \geq \frac{k_1 k_2}{\gcd(2, n)} > 8(p^3 + p),
\]
which is the width of the Hasse--Weil interval over $\Fpt$. Moreover, $|H|$ divides $n^2$, so~\Cref{lem:width-bound-argument} gives $|A(\Fpt)| = n^2$. As $A$ is defined over
$\Fp$,~\Cref{thm:supersingular-orders} shows that $A$ is supersingular of type $b$.

For the converse, $A(\mathbb{F}_p)$ has exponent $n$. If $b = 0$, then $k_1 = k_2 = n/2$ by~\Cref{lem:b0}, and the condition
$n^2/4 > 16(p^3 + p)$ holds for $p \geq 67$. If
$b = \pm 1$, then $n$ is odd and $k_1, k_2 \geq n/3$ by~\Cref{lem:b1} and~\Cref{lem:bm1},
and $n^2/9 > 8(p^3 + p)$ holds for $p \geq 79$.
\end{proof}
\begin{remark}
    In \Cref{cor:conclusive-testing-cyclic}, the points $R$ and $S$ only serve as witnesses: a pairing $e_n(P, R)$ of order $k_1$ certifies that $P$ has order at least $k_1$, and similarly for $Q$. Supersingularity then follows from the width bound on $\langle P, Q^\rho\rangle$. Such witnesses cannot be found in $A(\Fpt)$, as pairings between points of $A(\Fpt)$ take values in $\mu_n \cap \Fpt^\times = \mu_{\gcd(n, p^2 - 1)}$, which has order at most 3. This is why we pass to $\Fpk$, where non-degeneracy of $e_n$ guarantees that suitable witnesses exist.
\end{remark}

\paragraph{Proof of Superspeciality.}
The above results give a proof on the type $b$ of $\J$, and therefore also on $\phi_\J(x)$.
Again, following \Cref{lem:poonen en co}, whenever $b = \pm 2$ this immediately also proves that $\J$ is superspecial,
and otherwise, following \Cref{thm:twisting superspecial},
we compute the automorphism group of the associated hyperelliptic curve.

\subsection{Conclusive testing of extremal \texorpdfstring{abelian surfaces}{abelian surfaces} over $\Fpt$}\label{sec:extremal-conclusive}

The pairing-based approach for abelian surfaces of type $b = \pm 2$ directly extends to
conclusive pairing-based superspeciality verification of minimal and maximal abelian surfaces over $\Fpt$.
However, in these situations, we should no longer assume $\Fp$-rationality of any points, and
hence no longer have the isotropy conditions on $\J(\Fp)$ and $\J^t(\Fp)$ to our advantage.
Given a to-be-verified basis $B = \{ P_1, \ldots, P_4 \}$ of $\J(\Fpt)$,
our approach is to use pairings to determine the size of $H = \langle P_1, \ldots, P_4\rangle$,
which implies minimality or maximality as soon as $|H|$ is larger than the width of the Hasse-Weil interval.
We describe a simple informal algorithm for maximal abelian surfaces,
and the variant for minimal abelian surfaces is self-evident.

\begin{lemalg}\label{alg:conclusive-extremal-surfaces}
  Let $A$ be an abelian surface over $\Fpt$, and $g \in \mu_{p+1} \subseteq \Fpt^*$
  a fixed generator of $\mu_{p+1}$. Repeat the following steps until termination.
  \begin{enumerate}[itemsep=0mm]
    \item {\upshape Sample 4 random points $P_1, \ldots, P_4 \rgets A(\Fpt)$}
    \item {\upshape If $[p+1]P_i \neq \zero_A$ for any $i$, return \textbf{false}}
    \item {\upshape Compute $\zeta_{i,j} = e_{p+1}(P_i, P_j)$ and set $Z = (\log_g(\zeta_{i,j}))$ as a matrix over $\Z$}
    \item {\upshape Compute the invariant factors $d_i$ of the Smith normal form of $Z$}
    \item {\upshape Set $N_Z= \prod_{i=1}^{4} \frac{p+1}{\gcd(d_i, p+1)}$. If $N_Z > 8p^3 + 8p$, return \textbf{true}. Otherwise, repeat.}
  \end{enumerate}
\end{lemalg}

\begin{lemma}
  \Cref{alg:conclusive-extremal-surfaces} returns \textbf{\upshape true} only for maximal supersingular
  abelian surfaces over $\Fpt$, and \textbf{\upshape false} otherwise.
\end{lemma}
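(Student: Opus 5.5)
The plan is to prove soundness by showing that $N_Z$ is a lower bound for $|H|$ with $H = \langle P_1, \ldots, P_4 \rangle$. The width-bound argument (\Cref{lem:width-bound-argument}) then forces $|A(\Fpt)| = (p+1)^4$. Maximality gives supersingularity, and even superspeciality, by \Cref{lem:poonen en co}. For the \textbf{false} branch, note that step 2 is only reached on $A$ if some sampled $P_i$ satisfies $[p+1]P_i \neq \zero_A$. On a maximal surface every rational point is killed by $p+1$: the Frobenius acts as $[-p]$ (\Cref{lem:effect-twist-on-minpol}, or directly \cite{max_abvars}), so $A(\Fpt) = A[p+1]$. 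Hence \textbf{false} is returned only for non-maximal $A$. I would also add that termination on maximal $A$ holds with overwhelming probability, though the lemma as stated only needs correctness of the outputs.

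The key step is the inequality $|H| \geq N_Z$. After step 2, all $P_i$ lie in $A[p+1]$, and $\mu_{p+1} \subseteq \Fpt^\times$, so the pairing values are defined and discrete logarithms to base $g$ give a well-defined matrix $Z$ over $\ZZ/(p+1)\ZZ$ (lifted to $\ZZ$). Consider the homomorphism
\[
  \psi \colon (\ZZ/(p+1)\ZZ)^4 \to \mu_{p+1}^4, \qquad x \mapsto \bigl(e_{p+1}(\textstyle\sum_i x_i P_i, P_j)\bigr)_j .
\]
This map factors through $H$, via $x \mapsto \sum_i x_i P_i$ followed by pairing against the $P_j$. Therefore $|H| \geq |\mathrm{im}\,\psi|$.

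In coordinates, $\psi$ is multiplication by $Z$ (or $Z^T$) over $\ZZ/(p+1)\ZZ$. Its image size is invariant under unimodular changes of basis, so we may pass to the Smith normal form. An invariant factor $d_i$ contributes a cyclic image of order $(p+1)/\gcd(d_i, p+1)$, so $|\mathrm{im}\,\psi| = \prod_i (p+1)/\gcd(d_i, p+1) = N_Z$. One subtlety needs a sentence: the Smith normal form over $\ZZ$ reduces to a valid diagonalisation over $\ZZ/(p+1)\ZZ$, because unimodular integer matrices stay invertible modulo $p+1$.

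With $N_Z > 8p^3 + 8p$, we get that $|H|$ divides $(p+1)^4$ and exceeds the width of the Hasse--Weil interval over $\Fpt$ computed earlier. \Cref{lem:width-bound-argument} then gives $|A(\Fpt)| = (p+1)^4$, so $A$ is maximal.

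I expect the main obstacle to be the bookkeeping in identifying $|\mathrm{im}\,\psi|$ with $N_Z$. This needs correct handling of invariant factors that are $0$ (they give factor $1$, since $\gcd(0, p+1) = p+1$) and the transpose issue, since the image sizes of $Z$ and $Z^T$ agree. Neither is deep.
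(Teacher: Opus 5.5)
Your proof is correct and follows the same argument as the paper. A rejection at step 2 certifies non-maximality, since $A(\Fpt) = A[p+1]$ on a maximal surface. Acceptance uses the Smith-normal-form image size $N_Z$ of $Z$ to bound $|H|$ above the width $8p^3+8p$, and \Cref{lem:width-bound-argument} then gives $|A(\Fpt)| = (p+1)^4$. One point in your favour: factoring $\psi$ through $H$ makes explicit that the image of $Z$ is only a lower bound for $|H|$. In general it equals $|H|/|H \cap H^{\perp}|$, and a lower bound is all the width-bound argument needs. The paper's phrase ``measure the size of $H$'' leaves this implicit.
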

\begin{proof}
  If $[p+1]P_i \neq \zero_A$ for any $i$, then $A$ is clearly non-maximal.
  Then, when the four points $P_i$ span a subgroup $H$
  with $|H| > 8p^3 + 8p$ by the width-bound argument on the Hasse--Weil interval, we
  have that $A$ must have order $(p+1)^4$, i.e., maximal. We can measure the size of $H$
  using the image of $Z$ as a linear map.
  This can be done effectively\footnote{
    See \url{https://math.stackexchange.com/questions/3252961/image-of-a-linear-transformation-with-a-matrix-in-mathbb-z-m}.
  }
  using the invariants $d_1, \ldots, d_4$ of the Smith normal form of $Z$ as a matrix over $\Z$:
  the image of $Z$ is then of size $\prod \frac{p+1}{\gcd(d_i, p+1)}$.
\end{proof}

\begin{remark}
  Given a prime $p$, it is theoretically possible to derive the expected number of iterations
  required in \Cref{alg:conclusive-extremal-surfaces}.
  Experimental data for primes of the form $p = 2^{k} \cdot 5 - 1$
  shows that we succeed on the first try with probability over $99 \%$ already from
  $k = 18$. Thus, in practical applications, \Cref{alg:conclusive-extremal-surfaces}
  works on the first try.
\end{remark}

\begin{remark}
  We have written \Cref{alg:conclusive-extremal-surfaces} with simplicity in mind,
  however, theoretically, as \Cref{alg:conclusive-extremal-surfaces} samples random points, it may never terminate
  as the algorithm could sample low-order points over and over with
  exponentially small probability.
  This may be resolved by deterministically sampling points that span an increasingly larger
  subgroup consistently.
  For example, if $A$ is a Jacobian $\J$, the deterministic sampling of divisors reduces to deterministically
  sampling points $(x_0, y_0)$ on the associated hyperelliptic curve $C$, which can be done by sampling
  $x_0 = \omega^n$ for a generator $\omega$ of $k^*$ and increasing $n$.
  However, in practice, such an algorithm is tedious to work with,
  and does not significantly speed-up validation in practical situations.
\end{remark}

\subsection{Conclusive Testing for Cryptographic Primes}
\label{sec:cryptographic-primes}
Isogeny-based cryptography often relies on efficient computations of $(2,2)$-isogenies, hence the base field $\Fp$ or $\Fpt$
is chosen so that a large power of $2$ divides $p-1$ or $p+1$, and one
then works with principally polarized abelian surfaces with group structure $(p - 1)^{k}$, resp. $(p + 1)^k$,
where $k = 2$ when working over $\Fp$, or $k = 4$ when working over $\Fpt$.
This allows one to efficiently compute long chains of $(2,2)$-isogenies using
rational maximally isotropic $(2^f, 2^f)$-subgroups, with $2^f \mid p \pm 1$.
This large amount of rational $2^\bullet$-torsion allows us to simplify conclusive testing significantly,
by a generalisation of a similar supersingularity test for elliptic curves~\cite[Alg. 2]{dCSIDH}.
The core idea is to use the $2^\bullet$-torsion instead of a $2^\bullet$-pairing
to show that both $A$ and $A^t$ have a certain group structure, no matter what the isotropy conditions
on this $2^\bullet$-torsion are.
In fact, as we drop the pairing, the results in this subsection do not require the principal
polarization either: they rely only on the orders of rational points and the width-bound argument of
\Cref{lem:width-bound-argument}.
We analyse these cases first over $\Fp$ and then over $\Fpt$,
to give a conclusive test of extremality, hence, superspeciality, hence supersingularity.

\paragraph{Over $\Fp$.}
We describe this in more detail for abelian surfaces of type $b = -2$, that is, assuming $A(\Fp)$ has order $(p+1)^2$.
If $P$ and $Q$ have order $2^f$ and $[2^{f-1}] P \neq [2^{f-1}] Q$, then
$\langle P, Q \rangle \cong \left(\ZZ/{2^f} \ZZ \right)^2$ gives us a subgroup of size $2^{2f}$ in $A(\Fp)$.
By the width-bound argument, when $2^{2f} >  8 \cdot \sqrt{p} \cdot (p+1)$, which is satisfied as soon
as $f > \tfrac{3}{2} + \tfrac{3}{4} \log_2 p$, the order of $A(\Fp)$ must be $(p+1)^2$.
The next lemma generalizes this argument when relaxing the assumption that $P, Q$ have full order $2^f$.

\begin{lemma}\label{lem:two torsion test}
  Let $A$ be an abelian surface over $\Fp$ and let $f$ be the largest integer such that $2^f \mid p + 1$.
  Let $P, Q \in A[2^f](\Fp)$ of order $2^{k_P}$ and $2^{k_Q}$ respectively, with $k_P \geq k_Q$.
  If $[2^{k_P - 1}]P \neq [2^{k_Q-1}] Q$ and $k_P + k_Q > 3 + \tfrac{3}{2} \log_2 p$,
  then $A$ is supersingular.
\end{lemma}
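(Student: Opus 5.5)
We plan to show that $H = \langle P, Q\rangle \subseteq A(\Fp)$ has order exactly $2^{k_P + k_Q}$. Then $|H|$ divides $(p+1)^2$ and exceeds the width $W = 8\sqrt{p}(p+1)$ of the Hasse--Weil interval over $\Fp$. By \Cref{lem:width-bound-argument} this forces $|A(\Fp)| = (p+1)^2$, and we then upgrade this order statement to supersingularity.

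\textbf{Step 1: $H \cong \ZZ/2^{k_P}\ZZ \times \ZZ/2^{k_Q}\ZZ$.} The group $H$ is a quotient of $\ZZ/2^{k_P}\ZZ \times \ZZ/2^{k_Q}\ZZ$, and it equals it exactly when $\langle P\rangle \cap \langle Q\rangle = 0$. A nontrivial intersection of two cyclic $2$-groups always contains their unique subgroups of order $2$. Those are generated by $[2^{k_P-1}]P$ and $[2^{k_Q-1}]Q$. Hence the hypothesis $[2^{k_P-1}]P \neq [2^{k_Q-1}]Q$ gives a trivial intersection, so $|H| = 2^{k_P+k_Q}$. (We assume $k_Q \geq 1$; if $k_Q = 0$ the hypothesis is degenerate, and we can instead treat $Q = \zero$ as making the bound on $k_P$ alone impossible, since $k_P \le f \le \log_2(p+1)$.)

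\textbf{Step 2: width bound.} The condition $k_P + k_Q > 3 + \tfrac32\log_2 p$ gives $|H| > 8p^{3/2}$. We need $|H| > 8\sqrt{p}(p+1)$, and $8p^{3/2}$ falls slightly short of that. Since $|H|$ is a power of $2$ dividing $(p+1)^2$, any slack gives integrality room, but to close the gap cleanly I would either use the sharper width $(\sqrt p+1)^4 - (\sqrt p-1)^4 = 8\sqrt p(p+1)$ directly or argue slightly differently. The cleaner route is to note that the two candidate multiples of $|H|$ in the interval must differ by at least $|H|$. It therefore suffices that $|H|$ exceeds the actual width, and I expect to verify $2^{k_P+k_Q} \ge 2\cdot 8p^{3/2}$ (because $k_P + k_Q$ is an integer exceeding $3+\tfrac32\log_2 p$), or to absorb the constant. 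This numerical bookkeeping is the fiddly part. Granting it, $|H|$ divides $(p+1)^2$ because $2^{k_P},2^{k_Q}\mid 2^f \mid p+1$, and we conclude $|A(\Fp)| = (p+1)^2$.

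\textbf{Step 3: supersingularity.} The main obstacle is that the order $(p+1)^2$ alone does not imply supersingularity. We plan to mimic \Cref{lem:prob-p+1}. Write $A(\Fp)[p+1] \supseteq H$ with invariants $h_0 \ge h_1$, so $h_1 \ge 2^{k_Q}$ and $h_0 \ge 2^{k_P}$. As in that proof, \Cref{lem:probability} and \Cref{cor:lam-in-terms-phi} give $h_1 \mid 2a_p$. If $a_p \neq 0$, then $h_1 \le 8\sqrt p$ and $h_0 \le p+1$. The rank-two part then has order at most $h_0h_1 \le W$, while $|H| \le h_0h_1$, contradicting Step 2. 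Hence $a_p = 0$. Then $|A(\Fp)| = (p+1)^2$ forces $b_p = -2p$, so $\phi_A = x^4 + 2px^2 + p^2$ and, by \Cref{lem:supersingular_orders}, $A$ is supersingular of type $b=-2$.
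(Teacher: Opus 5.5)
Your Steps 1 and 3 are exactly the paper's proof. Step 1 matches its first line, and Step 3 matches its appeal to the argument of \Cref{lem:prob-p+1}: $h_1\mid 2a_p$, so $a_p\neq 0$ would force $h_0h_1\le 8\sqrt p(p+1)$.

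The numerical issue you flag in Step 2 is genuine, and the paper's proof passes over it as well. It writes $2^{k_P+k_Q}>8\sqrt p(p+1)$, but with $K=k_P+k_Q$ the hypothesis only gives $2^{K}>8p^{3/2}=8\sqrt p(p+1)-8\sqrt p$. Your proposed repair is wrong. An integer $K>3+\tfrac32\log_2 p$ yields $2^K>8p^{3/2}$ and nothing more; it does not give $2^K\ge 16p^{3/2}$. As written, your width-bound step and the closing contradiction of Step 3 (``contradicting Step 2'') both rest on this unproved inequality.

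The missing observation is that $|H|$ only has to beat the \emph{half}-width. This is because the target order $(p+1)^2$ is itself a multiple of $|H|$ and sits near the centre of the interval.
\begin{itemize}
\item Since $k_P,k_Q\le f$, we have $2^K\mid(p+1)^2$, so $|A(\Fp)|=(p+1)^2+j\,2^K$ for some $j\in\ZZ$.
\item The distances from $(p+1)^2$ to the two ends of the Hasse--Weil interval are $4\sqrt p(p\pm\sqrt p+1)<8p^{3/2}<2^K$. Hence $j=0$, without ever needing $2^K>8\sqrt p(p+1)$.
\item From $\phi_A(1)=(p+1)^2$ you get $b_p=a_p(p+1)-2p$. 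Substituting this into the Weil bounds \eqref{eq:bound-a}--\eqref{eq:lower-bound-b} gives $0\le a_p\le 4p/(\sqrt p+1)^2<4$.
\item Finally, $2^{k_Q}\mid h_1\mid 2a_p$, and $k_Q\ge K-f>3+\log_2\bigl(p^{3/2}/(p+1)\bigr)>3$. So $8\mid a_p$, hence $a_p=0$ and $\phi_A=(x^2+p)^2$.
\end{itemize}

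The paper's inequality $2^K>8\sqrt p(p+1)$ does in fact hold under the hypotheses, but it needs an argument. Write $p+1=c2^f$ with $c$ odd and use $K\le 2f$. Then the alternative $4^{K-3}\in(p^3,p(p+1)^2]$ would place the power of two $4^{K-3}/2^{3f}$ in $\bigl((c-2^{-f})^3,c^3\bigr)$, and that interval contains no power of two. This does not follow from the stated bound alone.
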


\begin{proof}
  Since $[2^{k_P-1}]P \neq [2^{k_Q-1}] Q$, we have that $\langle P, Q \rangle \cong \ZZ_{2^{k_P}} \times \ZZ_{2^{k_Q}} \hookrightarrow A(\Fp)[p+1]$.
  Let $A[p+1](\Fp) \cong \Z_{h_0} \times \Z_{h_1} \times \Z_{h_2} \times \Z_{h_3}$ with $h_{i+1}\mid h_i$.
  Then we have that $h_0 \cdot h_1 \geq 2^{k_P + k_Q} > 8 \sqrt{p} (p+1)$. The argument in the proof of~\Cref{lem:prob-p+1} shows that $a_p=0$ and $|A(\Fp)| = (p+1)^2$, so $A$ is supersingular of type $b=-2$.
\end{proof}
Finding such points $P$ and $Q$ is simple:
randomly sampling $P, Q \rgets A(\Fp)$, and using $[h]P, [h]Q$ where $h = \tfrac{p + 1}{2^f}$ almost always works on the first try,
as such points almost always have order $2^f$ or $2^g$ with $g$ only a bit smaller than $f$.
Thus, for $f$ close to $\log_2 p$, the condition on $k_Q$ is almost always satisfied.
Interestingly, one can also sample directly in $A(\Fp) \setminus [2] A(\Fp)$,
see e.g.~\cite{rotk,tate-profile}, to enforce $k_Q = f$ or $k_Q = f - 1$, depending
on the exponent of $A$.
In total, this
gives a highly efficient conclusive proof of supersingularity
using only simple and efficient $\Fp$-arithmetic for primes such that $2^f \mid p+1$ and $f\approx \log_2 p $.
The case $b = 2$ follows similarly as \Cref{lem:two torsion test},
by repeating the above procedure for both $A(\Fp)$ and $A^t(\Fp)$ with $2^f \mid p-1$.

\paragraph{Over $\Fpt$.}
We again explain the argument only for the maximal case where $A(\Fpt)$ has order $(p+1)^4$,
and omit the analogous minimal case.

\begin{lemma}\label{lem:fpt two torsion}
  Let $A$ be an abelian surface over $\Fpt$, let $f$ be the largest integer such that $2^f \mid p + 1$
  and let $P_1, P_2, P_3, P_4\in A[p+1]$ be such that $Q_i \colonequals [\tfrac{p+1}{2^f}] P_i$ has order $2^{k_i}$ for $1 \leq i \leq 4$, and the points $[2^{k_i-1}]Q_i$ span $A(\Fpt)[2]$. If $\sum k_i > 3 + 3\log_2 p$,
  then $A$ is maximal, hence superspecial.
\end{lemma}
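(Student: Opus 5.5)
The plan is to show that the subgroup $H = \langle Q_1, Q_2, Q_3, Q_4 \rangle \subseteq A(\Fpt)$ has order at least $2^{\sum k_i}$. Then I apply the width-bound argument of \Cref{lem:width-bound-argument} over $\Fpt$, using that $|H|$ divides $(p+1)^4$. Here $P_i$ is an $\Fpt$-rational point of $A[p+1]$, as in \Cref{alg:conclusive-extremal-surfaces}.

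\textbf{Rationality and divisibility.} Since $P_i \in A[p+1](\Fpt)$, each $Q_i$ is a rational point of $2$-power order dividing $2^f$. Hence $H \subseteq A[2^f](\Fpt)$, which is a subgroup of $A[p+1](\Fpt)$. In particular $|H|$ is a power of $2$ dividing $2^{4f}$, and hence dividing $(p+1)^4$.

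\textbf{Size of $H$.} Consider the homomorphism
\[
  \psi \colon \ZZ_{2^{k_1}} \times \ZZ_{2^{k_2}} \times \ZZ_{2^{k_3}} \times \ZZ_{2^{k_4}} \to H, \qquad (c_i) \mapsto \textstyle\sum c_i Q_i,
\]
which is surjective. I claim it is injective. Suppose $\psi$ has a non-trivial kernel. A non-trivial finite $2$-group contains an element of order $2$, so the kernel contains such an element $(c_i)$. Then each $c_i$ is either $0$ or $2^{k_i-1}$ modulo $2^{k_i}$, with not all $c_i$ zero. This gives a non-trivial relation $\sum_{i \in I} [2^{k_i-1}]Q_i = \zero$ among the four points $[2^{k_i-1}]Q_i$. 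These four points span $A(\Fpt)[2]$, which is contained in $A[2] \cong (\ZZ/2\ZZ)^4$ and therefore has order at most $16$. Four spanning vectors can only span a group of order at most $16$ if they are linearly independent over $\mathbb{F}_2$, so no such relation exists. Hence $\psi$ is an isomorphism and $|H| = 2^{\sum k_i}$.

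This step is the main obstacle: one must make sure the spanning hypothesis really yields independence. It does, because any spanning set of four vectors is a basis when the target has order $16$. Note also that spanning forces $A(\Fpt)[2] = A[2]$.

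\textbf{Width bound and conclusion.} By \Cref{lem:width-bound-argument} with $g = 2$ and $q = p^2$, the Hasse--Weil interval has width at most $4^2 \cdot p^{3} = 16p^3$. The hypothesis $\sum k_i > 3 + 3\log_2 p$ gives
\[
  |H| = 2^{\sum k_i} > 8p^3 .
\]
I would first check which of the two width bounds is meant. The exact width is
\[
  (p+1)^4 - (p-1)^4 = 8p^3 + 8p,
\]
and I would use this exact value, possibly needing the hypothesis as the strict version $|H| > 8p^3 + 8p$. Since $|H|$ and $8p^3 + 8p$ are not integer-comparable issues beyond this, the small adjustment from $8p$ is absorbed because $2^{\sum k_i}$ is a power of $2$ exceeding $8p^3$ by a factor larger than $1 + p^{-2}$ whenever the inequality is not tight. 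Failing that, I would state the bound with $8p^3 + 8p$.

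Then $(p+1)^4$ is a multiple of $|H|$ lying in the Hasse--Weil interval, and it is the unique such multiple. Hence $|A(\Fpt)| = (p+1)^4$, so $A$ is maximal, and by \Cref{lem:poonen en co} it is superspecial.
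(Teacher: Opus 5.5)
Your overall route is the paper's: show $H=\langle Q_1,\dots,Q_4\rangle\cong\prod_i \ZZ/2^{k_i}\ZZ$, note that $|H|$ divides $(p+1)^4$, and apply the width bound over $\Fpt$. The paper's proof is literally ``as for the $\Fp$ lemma, now requiring $2^{\sum k_i}>8p(p^2+1)$''. However, the step you yourself flag as the main obstacle is not proved, and it cannot be proved from the hypothesis as literally written.

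Your sentence ``four spanning vectors can only span a group of order at most $16$ if they are linearly independent'' is backwards. Any four vectors of $A[2]\cong(\ZZ/2\ZZ)^4$ span a group of order at most $16$; they are independent if and only if the span has order exactly $16$. Your further claim that ``spanning forces $A(\Fpt)[2]=A[2]$'' is false. If $A(\Fpt)[2]$ has rank $r<4$, then any four points spanning it satisfy a non-trivial $\mathbb{F}_2$-relation. Your kernel element then exists, and $|H|<2^{\sum k_i}$.

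Here is a concrete counterexample to the literal reading. Let $p=2^f-1$ be a Mersenne prime, e.g.\ $p=31$. Let $A=E_1\times E_2$ over $\Fpt$, with $E_1$ ordinary with cyclic group of order $p^2-1$ and $E_2$ ordinary of odd order. Take $P_1=\dots=P_4$ to be a point of order $2^f$. Then the four points $[2^{f-1}]Q_i$ coincide and span $A(\Fpt)[2]\cong\ZZ/2\ZZ$, and $\sum k_i=4f>3+3\log_2 p$. Yet $|H|=2^f$ and $A$ is not maximal.

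The missing ingredient is that the four order-$2$ points must be $\mathbb{F}_2$-independent, i.e.\ span the full $A[2]$, so that in particular $A[2]\subseteq A(\Fpt)$. This is what the paper intends: its remark after the lemma says the $2$-torsion points below the $P_i$ ``span $A[2]$, so that they are independent''. You should state this reading explicitly and take it as the hypothesis, rather than try to derive independence. With it, your kernel argument correctly gives $|H|=2^{\sum k_i}$. Your assumption that the $P_i$ are $\Fpt$-rational is also the intended one and is needed for the width bound.

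Your treatment of the width is also not a proof. The hypothesis gives $2^{\sum k_i}>8p^3$. Whether this implies $2^{\sum k_i}>8p^3+8p$ amounts to there being no power of $2$ in $(p^3,p^3+p]$. That is not automatic: for $p=5$, $128$ lies in $(125,130]$. In general it is a genuine Diophantine question, so the ``absorbed'' sentence should go.

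Use your fallback instead. The paper's own proof requires $2^{\sum k_i}>8p(p^2+1)$, i.e.\ $\sum k_i>3+\log_2(p^3+p)$; its stated threshold $3+3\log_2 p$ carries the same small slack. With that inequality and the independence hypothesis, the rest of your argument is correct. The unique multiple of $|H|$ in the Hasse--Weil interval over $\Fpt$ is $(p+1)^4$, so $A$ is maximal, hence superspecial by \Cref{lem:poonen en co}.
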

\begin{proof}
  As for \Cref{lem:two torsion test}, now requiring $2^{\sum k_i} > 8 p (p^2 + 1)$.
\end{proof}
Sampling such a generating set $\{P_1, \ldots, P_4\}$ can be done efficiently but is rather technical, see~\cite[Thm. 1]{tate-profile}.
Nevertheless, similar to the $\Fp$-case, sampling random points $P_1, \ldots, P_4$ is highly likely to succeed on a first try,
as long as the $2$-adic order of each of $P_1, \ldots P_4$ exceeds $\tfrac{3}{4}(1 + \log_2 p)$,
and the $2$-torsion points below $P_i$ span $A[2]$, so that they are independent.

\section{Benchmarking Results}
\label{sec:results}
We now compare the performance of our algorithms, which
we have implemented in \texttt{Magma}~\cite{magma}, to existing methods in the literature to verify supersingularity.
We are not aware of other probabilistic methods to verify supersingularity.
We therefore compare both \IsSupersingular as well as the pairing-based conclusive methods
to point counting on $A$. For superspeciality, we compare to the computation of the Cartier-Manin matrix (via \verb|CartierRepresentation|)
in \texttt{Magma}. Lastly, we also compare the speed-up achieved for
primes of the form $p = c \cdot 2^f - 1$ for Jacobians of type $b = -2$,
using the $2^\bullet$-torsion test from \cref{lem:two torsion test}, as this situation
is common in isogeny-based cryptography.

\paragraph{Performance of probabilistic approaches.}
For superspeciality, we track the average computation time to probabilistically verify superspeciality of 100 supersingular Jacobians
over $\Fp$ for primes $p$ from $29$ up to $211$
with \Cref{alg:superspecial-test}.
We compare this to the computation of the Cartier--Manin matrix in \texttt{Magma}.
For supersingularity,
we track the average computation time to probabilistically verify supersingularity of 1000 supersingular Jacobians
over $\Fp$, for primes $p$ from $29$ up to $211$
with \IsSupersingular (\Cref{alg:supersingularity_test}).
We compare this to a point counting approach based on \Cref{lem: the twist is enough} in \texttt{Magma}.
The results are given in \cref{fig:prob-testing}.

\begin{figure}[h!]
    \centering
    \begin{subfigure}[t]{0.48\textwidth}
        \centering
        \begin{tikzpicture}[scale=.5]
            \begin{axis}[
            xlabel={Prime $p$},
            ylabel={Average time (s, $\log_{10}$ scale)},
            grid=major,
            ymode=log,
            log basis y=10,
            legend pos=north west,
            legend image code/.code={
                \draw[#1] plot coordinates {(0.3cm,0cm)};
            },
            width=12cm,
            height=8cm
            ]

            \addplot[color=blue, mark=x]
                table[x=p, y=PointMult_Avg] {figs/benchmark_data_sspecial_prob.dat};
            \addlegendentry{Algorithm \ref{alg:superspecial-test}}

            \addplot[color=orange, mark=+]
                table[x=p, y=Cartier_Avg] {figs/benchmark_data_sspecial_prob.dat};
            \addlegendentry{CartierRepresentation}

            \end{axis}
        \end{tikzpicture}
        \vspace*{-0.5em}
        \caption{Superspeciality: \Cref{alg:superspecial-test} vs. Cartier–Manin matrix computation.}
        \label{fig:prob-testing-superspecial}
    \end{subfigure}
    \hfill
    \begin{subfigure}[t]{0.48\textwidth}
        \centering
        \begin{tikzpicture}[scale=.5]
            \begin{axis}[
            xlabel={Prime $p$},
            ylabel={Average time ($\mu$s, $\log_{10}$ scale)},
            ymode=log,
            log basis y=10,
            grid=major,
            legend pos=north west,
            legend image code/.code={
                \draw[#1] plot coordinates {(0.3cm,0cm)};
            },
            width=12cm,
            height=8cm
            ]

            \addplot[color=blue, mark=x]
                table[x=p, y=PointMult_Avg] {figs/benchmark_data_ssing_prob.dat};
            \addlegendentry{IsSupersingular}

\addplot[color=orange, mark=+]
                table[x=p, y=PointCount_Avg] {figs/benchmark_data_ssing_prob.dat};
            \addlegendentry{Point Counting}

            \addlegendentry{Cartier Representation}

            \end{axis}
        \end{tikzpicture}
        \vspace*{-0.5em}
        \caption{Supersingularity: \IsSupersingular vs. point counting.}
        \label{fig:prob-testing-supersingular}
    \end{subfigure}
    \caption{Performance of probabilistic verification: average time to test 100 Jacobians for superspeciality (left), 1000 for supersingularity (right).}
    \label{fig:prob-testing}
\end{figure}

\paragraph{Performance of conclusive approaches.}
For supersingularity, we track the average total computation time to conclusively verify supersingularity of 100 supersingular Jacobians of type $b=-2$
over $\Fp$, for primes $p$ of the form $p = c2^f -1$ with $c \in \{5, 3 \cdot 5, 3^2 \cdot 5, 3\cdot 5\cdot 7\}$.
We first compare the pairing-based methods from \Cref{sec:conclusive} to the point counting approach based on Lemma \ref{lem: the twist is enough} in \texttt{Magma},
for $p$ from $29$ up to $188743679 \approx 2^{27}$ ($f$ between $2$ and $22$).
We then show the improvement of the $2^\bullet$-torsion test (\Cref{lem:two torsion test})
over the conclusive test from \Cref{sec:conclusive} in \Cref{fig:conc-testing}, for $p$ from $2^{13}$ up to $2^{297}$ ($f$ from $9$ up to $293$).

\begin{figure}[h!]
    \centering
\begin{subfigure}[t]{0.48\textwidth}
        \centering
        \begin{tikzpicture}[scale=.5]
            \begin{axis}[
            xlabel={Prime $p$ ($\log_2$ scale)},
            ylabel={Average time ($\mu$s, $\log_{10}$ scale)},
            xmode=log,
            log basis x=2,
            ymode=log,
            log basis y=10,
            grid=major,
            legend pos=north west,
            legend image code/.code={
                \draw[#1] plot coordinates {(0.3cm,0cm)};
            },
            width=12cm,
            height=8cm
            ]

            \addplot[color=blue, mark=x]
                table[x=p, y=Pairing_Avg] {figs/benchmark_data_conc_big_p_together.dat};
            \addlegendentry{Pairing}

            \addplot[color=orange, mark=+]
                table[x=p, y=PointCount_Avg] {figs/benchmark_data_conc_big_p_together.dat};
            \addlegendentry{Point Counting}

            \end{axis}
        \end{tikzpicture}
        \vspace*{-0.5em}
        \caption{Conclusive: pairing-based approach vs. point counting.}
        \label{fig:conc-testing-supersingular}
    \end{subfigure}
    \hfill
    \begin{subfigure}[t]{0.48\textwidth}
        \centering
        \begin{tikzpicture}[scale=.5]
            \begin{axis}[
            xlabel={Prime $p$ ($\log_2$ scale)},
            ylabel={Average time ($\mu$s, $\log_{10}$ scale)},
            xmode=log,
            log basis x=2,
            ymode=log,
            log basis y=10,
            grid=major,
            legend pos=south east,
            legend image code/.code={
                \draw[#1] plot coordinates {(0.3cm,0cm)};
            },
            width=12cm,
            height=8cm
            ]

            \addplot[color=orange, mark=+]
                table[x=p, y=Pairing_Avg] {figs/benchmark_data_conc_crypto.dat};
            \addlegendentry{Pairing}

            \addplot[color=blue, mark=x]
                table[x=p, y=PairingCrypto_Avg] {figs/benchmark_data_conc_crypto.dat};
            \addlegendentry{$2^\bullet$-Torsion Test}

            \end{axis}
        \end{tikzpicture}
        \vspace*{-0.5em}
        \caption{Conclusive, on primes $p = c2^f -1$: pairing method vs. the $2^\bullet$-torsion test from \Cref{lem:two torsion test}.}
        \label{fig:two-tors-test-improvement}
    \end{subfigure}
    \caption{Performance of conclusive verification: average time to test 100 Jacobians.}
    \label{fig:conc-testing}
\end{figure}

\section{Future Work}
\label{sec:future}
This work covers probabilistic and conclusive supersingularity
and superspeciality testing in dimension 2 over $\Fp$.
A natural direction for follow-up work is to determine similar efficient
criteria for supersingular abelian varieties of dimension $g > 2$, which
comes down to a specific case of the Ekedahl--Oort stratification of particular
interest to cryptography. Similarly, the case of abelian surfaces over $\Fpt$
has only been partially answered by this work, as the results in
\Cref{sec:testing,sec:conclusive} translate easily to $\Fpt$
for the Jacobians of type $b \in \{ \pm 1, \pm 2 \}$.
The exploration of supersingularity verification for
abelian surfaces with other Weil polynomials is left for future work.

Furthermore, the following specializations of the approaches in this work are possible.

\paragraph{Kummer surfaces.}
When using Kummer surfaces, we do not distinguish $P$ and $-P$.
This means we must specialize certain results in this work to such a specific setting,
as some results require this precision. For example,
we may find ordinary abelian surfaces with
$|\J(\Fp)| = (p + 1)^2$ and $|\J^t(\Fp)| = (p-1)(p + 1)$.
This does not seem too problematic,
but a more careful examination of such cases is required to claim with certainty that
our probabilistic tests remain convincing. Note that the conclusive tests from \Cref{sec:conclusive}
remain applicable, as such pairings directly verify the \emph{group structure} instead of the order.

\paragraph{Optimized arithmetic.}
In this work, we have refrained from a detailed analysis of the cost of scalar multiplication and pairings on
Jacobians, and instead have simply written $[n]P$ or $e_n(P, Q)$.
In practice, one may apply well-known techniques, such as differential addition chains
or cubical arithmetic, to determine a precise cost of our algorithms in terms
of finite field operations, as has been done in the
case of elliptic curves~\cite{val-and-the-gang,pairings-CSIDH,dCSIDH,simplepairings25}.

\paragraph{Non-ordinary \texorpdfstring{abelian surfaces}{abelian surfaces}.}
To complete the efficient Ekedahl-Oort classification for abelian surfaces over $\Fp$,
one may study the case of non-ordinary abelian surfaces, i.e., $p$-rank 1. As such abelian surfaces have similarly
specific Weil polynomials, similar methods as proposed in this work should apply, although
with different technical derivations and pairing properties.

\printbibliography

\newpage
\appendix
\crefalias{section}{appendix}
\crefalias{subsection}{appendix}
\section{Non-trivial Automorphisms of Jacobians}\label{app:automorphisms}
In general, automorphisms of two-dimensional Jacobians are induced by automorphisms of the associated hyperelliptic
curves of genus 2. Any Jacobian will have the automorphism $[-1]$,
and one therefore often analyzes the \emph{reduced automorphism group} $\RA(C)$, resp., $\RA(\J)$.

For dimension 2, the non-trivial reduced automorphism groups are well-understood. An overview of these is given by
\textcite{atlas}, and summarized in~\cite[App. A]{supersingular-digraphs}. We follow the latter to describe
those non-trivial automorphisms used in this work: $\tau$, $\rho$, $\omega$, $\zeta^{(5)}$, and $\zeta^{(6)}$.
We denote a primitive $n$-th root of unity in $\Fq^*$ by $\zeta_n$.

\begin{itemize}
    \item The automorphism $\tau$ can be described as $x \mapsto 1/x$ on a suitable model for $C$, which always exists. Its image in $\RA(C)$ has order 2, and its lifts to $\Aut(C)$ satisfy $\tau^2 = \iota$, the hyperelliptic involution.
    \item The automorphism $\rho$ can be described as $x \mapsto \tfrac{(2 \zeta_3 + 1)(v^2 - 1)x + 3(v+1)^2}{3(v-1)^2x + (2 \zeta_3 + 1)(v^2 - 1)}$ for some $v \in \Fq$ on a suitable model for $C$,
    which always exists. It has order 3.
    \item The automorphism $\omega$ can be described as $x \mapsto \tfrac{x - (\sqrt{2} + 1)}{(\sqrt{2} - 1)x + 1}$ on a suitable model for $C$,
    which always exists. It has order 4.
    \item The automorphism $\zeta^{(5)}$ can be described as $x \mapsto \zeta_5 x$
    on the curve $C : y^2 = x^5 - 1$. It has order 5.
    \item The automorphism $\zeta^{(6)}$ can be described as $x \mapsto \zeta_6 x$
    on the curve $C : y^2 = x^6 + 1$. It has order 6.
\end{itemize}

\section{Classification of Superspecial Jacobians over $\Fpt$}\label{app:twisted superspecials}

We may repeat a similar computation as
we performed in \Cref{sec:superspecial criterion}
for Jacobians over $\Fpt$.
Namely, given a maximal, resp., minimal, Jacobian, we may compute the characteristic polynomial
of a non-quadratic twist $C^\sigma$ for a non-trivial automorphism $\sigma \in \RA(C)$.
This gives us the list of characteristic polynomials of twisted superspecial curves, resp., Jacobians, over $\Fpt$.
Similar to the case over $\Fp$, we may use the results in \cite{atlas,supersingular-digraphs} to determine the partial $\RA(C)$ from these
results, or a similar characterization of such Jacobians in terms of the associated $(2,2)$-isogenous product of elliptic curves.
Over $\Fpt$, we write that $\J$ is of type $(a,b)$ if
\[
    \phi_\J(x) = x^4 - a p x^3 + b p^2 x^2 - a p^3 x + p^4.
\]
The quadratic twist maps $(a, b) \mapsto (-a, b)$, hence we may classify these up to $\pm a$.

\begin{theorem}\label{thm:twisting superspecial2}
	Let $\J$ be a Jacobian over $\Fpt$. Then $\J$ is superspecial if and only if
	\begin{itemize}
		\item $\J$ is minimal or maximal, or
		\item $\J/\Fpt$ is a twisted superspecial
        \begin{enumerate}
            \item of type $(0, -2)$, e.g. $\phi_\J(x) = (x - p)^2(x + p)^2$, and $\sigma \in \RA(C)$ (whose lift in $\Aut(C)$ is an involution not equal to the hyperelliptic involution).
            \item of type $(\pm 2, 3)$, e.g. $\phi_\J(x) = (x^2 \pm px + p^2)^2$, and $\rho \in \RA(C)$
            \item of type $(0, 2)$, e.g. $\phi_\J(x) = (x^2 + p^2)^2$, and $\tau \in \RA(C)$
            \item of type $(\pm 1, 1)$, e.g. $\phi_\J(x) = (x^4 \pm px^3 + p^2x^2 \pm p^3x + p^4)$, and $\zeta^{(5)} \in \RA(C)$
            \item of type $(0, 1)$, e.g. $\phi_\J(x) = (x^2 - px + p^2)(x^2 + px + p^2)$, and $\zeta^{(6)} \in \RA(C)$
            \item of type $(0, 0)$, e.g. $\phi_\J(x) = (x^4 + p^4)$, and $\omega \in \RA(C)$
            \end{enumerate}
	\end{itemize}
\end{theorem}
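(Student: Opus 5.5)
The plan is to follow the structure of the proof of \Cref{thm:twisting superspecial}, with \Cref{thm:superspeciality if twist} (equivalently \Cref{lem:superspecial note}) as the backbone. A Jacobian $\J/\Fpt$ is superspecial if and only if some twist $C^\sigma$ of $C$ over $\Fpt$ is minimal or maximal. If $\J$ itself is extremal we are done, which gives the first bullet. Otherwise there is a twist $C'$ of $C$ that is extremal, and $C$ is in turn a twist of $C'$ by some $\sigma \in \Aut(C') \cong \Aut(C)$, using \Cref{lem:twists} and the fact that twisting is symmetric. Since $C'$ is extremal, Frobenius acts on $\J_{C'}$ as $[\pm p]$. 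Then \Cref{lem:effect-twist-on-minpol} gives
\[
\phi_\J(x) = p^4 f_\sigma(x/\pm p),
\]
where $f_\sigma$ is the characteristic polynomial of $\sigma$ acting on the $4$-dimensional $\ell$-adic Tate module (equivalently on $H^1$). So the classification reduces to listing the possible $f_\sigma$ for automorphisms $\sigma$ of genus-$2$ curves.

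Next I would go through the automorphisms of \Cref{app:automorphisms} and \cite[Table 4]{howe2009}, using the fact that an automorphism of order $d$ has eigenvalues that are $d$-th roots of unity closed under complex conjugation, with $f_\sigma \in \ZZ[x]$ of degree $4$. The cases are as follows.
\begin{itemize}
\item An involution $\ne \iota$ has eigenvalues $1,1,-1,-1$, so $f = (x-1)^2(x+1)^2$. This gives $(x-p)^2(x+p)^2$, i.e.\ type $(0,-2)$.
\item $\rho$ of order $3$ gives $(x^2+x+1)^2$, hence $(x^2 \pm px + p^2)^2$, type $(\pm 2,3)$.
\item $\tau$ with $\tau^2=\iota$ gives $(x^2+1)^2$, hence type $(0,2)$.
\item $\zeta^{(5)}$ gives $\Phi_5$, hence $x^4 \pm px^3 + \dots$, type $(\pm1,1)$.
\item $\zeta^{(6)}$ gives $\Phi_3\Phi_6$, or equivalently $(x^2+x+1)(x^2-x+1)$, type $(0,1)$.
\item $\omega$ of order $4$ (order $8$ in $\Aut(C)$) gives $\Phi_8 = x^4+1$, type $(0,0)$.
\end{itemize}
Signs and compositions with $\iota$ account for the $\pm a$ ambiguity, since the quadratic twist maps $(a,b)\mapsto(-a,b)$. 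Composites of these automorphisms (for example the order-$6$ and order-$10$ elements) yield the same polynomials up to sign.

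For the converse, suppose $\J$ has one of the listed Weil polynomials and the corresponding $\sigma \in \RA(C)$. I would twist $C$ by the lift of $\sigma^{-1}$ (or of $\sigma$, chosen appropriately) to obtain a curve whose Frobenius acts as $[\pm p]$. This uses \Cref{lem:effect-twist-on-minpol} in reverse: Frobenius on the twist is $\pi\sigma^{-1}$, and on these eigenspaces it becomes the scalar $\pm p$. That twist is extremal, so $\J$ is superspecial by \Cref{thm:superspeciality if twist}.

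The main obstacle will be the necessity direction of the automorphism condition: showing that a Jacobian of one of these non-extremal types can only be superspecial if the specific $\sigma$ lies in $\RA(C)$. Two cases need care. The first is distinguishing $\tau$ from $\omega$, which have different orders but can share eigenvalues. The second is making sure that every $\sigma$ producing $f_\sigma$ is realised by the listed reduced automorphism, up to conjugation and powers. I would handle this by reading off from Bolza's classification, as presented in \cite{atlas}, which elements of each of the $7$ groups $\RA(C)$ have which characteristic polynomial.
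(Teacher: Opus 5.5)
Your forward direction is the paper's own argument, and your list of polynomials and types is correct. By \Cref{lem:superspecial note} a superspecial $\J$ has an extremal twist $C'$ over $\Fpt$, and $C$ is the twist of $C'$ by some $\sigma\in\Aut(C)$. Then \Cref{lem:effect-twist-on-minpol} gives $\phi_\J(x)=p^4f_\sigma(x/\pm p)$, and the possible $f_\sigma$ are read off from \cite[Table 4]{howe2009}. The ``obstacle'' you flag there is not one. The twisting automorphism is handed to you, and since $\Aut(C)$ acts faithfully on the Tate module, $f_\sigma$ determines its order and powers:
\begin{itemize}
\item $f_\sigma=(x^2+1)^2$ forces $\sigma^2=\iota$, i.e.\ a $\tau$;
\item $f_\sigma=\Phi_8$ forces order $8$, i.e.\ an $\omega$ with $\RA(C)\cong S_4$.
\end{itemize}
So $\tau$ and $\omega$ cannot be confused.

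The genuine gap is in your converse. Suppose $\phi_\J=p^4f_\sigma(x/\pm p)$ and some $\sigma$ with this $f_\sigma$ lies in $\RA(C)$. You cannot conclude that $C^\sigma$ has Frobenius $[\pm p]$. That requires $\pi=\pm p\,\sigma$ on $\J$, but the Weil polynomial only fixes $\pi$ up to conjugacy, and $\pi$ need not even commute with $\sigma$. For instance, let $\RA(C)\cong\ZZ/2\ZZ\times\ZZ/2\ZZ$ and $\pi=p\,s$ for a non-hyperelliptic involution $s$. Twisting by an involution $s'$ with a different image in $\RA(C)$ gives Frobenius $p\,ss'$ or $p\,s's$. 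This element squares to $[-p^2]$, so the twist has type $(0,2)$ and is not extremal.

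More fundamentally, running \Cref{lem:effect-twist-on-minpol} backwards presupposes some $u\in\Aut(C)$ with $\pi=\pm p\,u$. For $\J/\Fpt$ this is equivalent to $\ker\pi=\ker[p]$, i.e.\ to superspeciality, so your argument is circular. In the paper this direction is vacuous, since ``twisted superspecial'' already means superspecial and non-extremal. For your stronger reading you need a different argument, for example:
\begin{itemize}
\item Every listed polynomial has roots $\pm p$ times roots of unity, so $\J$ is supersingular.
\item Whenever $\RA(C)$ contains one of $\rho,\tau,\omega,\zeta^{(6)}$ (or an involution as in the first item), it contains an element lifting to a non-hyperelliptic involution $s$. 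Then $\J$ is $(2,2)$-isogenous to $C/\langle s\rangle\times C/\langle s\iota\rangle$, a product of supersingular elliptic curves. Superspeciality is invariant under prime-to-$p$ isogenies, so $\J$ is superspecial.
\item For $\zeta^{(5)}$, i.e.\ $C\cong y^2=x^5-1$, check directly that type $(\pm1,1)$ forces $p\equiv 4\bmod 5$, where $C$ is superspecial. For $p\equiv 1$ the curve is ordinary. For $p\equiv 2,3$, Frobenius inverts $\zeta^{(5)}$, so the only twists over $\Fpt$ are $C$ and its quadratic twist, both with Weil polynomial $(x^2+p^2)^2$.
\end{itemize}
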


\paragraph{Superspeciality Verification.}
With the above result, superspeciality verification over $\Fpt$ is surprisingly easy:
either check that $\J$ is minimal or maximal using the results from \Cref{sec:testing}, or compute
the automorphism group of the associated hyperelliptic curve, and find a minimal or maximal twist.

\paragraph{Lifting $\Fp$ Curves.}
Recall that a curve over $\Fp$ with characteristic polynomial $\phi(x) = \prod (x - \alpha_i)$ lifts to a curve
over $\Fpt$ with characteristic polynomial $\phi(x) = \prod (x- \alpha_i^2)$.
This gives us that curves of type $b = \pm 2$ over $\Fp$ become of type $(\pm4, 6)$ over $\Fpt$,
curves of type $b = \pm 1$ become type $(\pm 2, 3)$,
and curves of type $0$ become type $(0, 2)$.

\end{document}